\documentclass[12pt]{amsart}
 
\usepackage{tikz}
\usetikzlibrary{cd}
\usetikzlibrary{decorations.pathreplacing,decorations.markings}

\topmargin       0 cm 
\evensidemargin  1cm
\oddsidemargin   1cm
\textheight      21.75cm
\textwidth       14.75cm

\usepackage{amsfonts,amssymb}

\usepackage{pinlabel,url}
\usepackage[all]{xy}
\usepackage{tikz}
\usepackage{graphicx}
\usepackage{xcolor}
\usepackage{color}
\usepackage[colorlinks,citecolor=blue]{hyperref}

\usepackage[nameinlink]{cleveref}

\theoremstyle{theorem}
\newtheorem{theorem}{Theorem}[section]

\newtheorem{lemma}[theorem]{Lemma}
\newtheorem{proposition}[theorem]{Proposition}
\newtheorem{corollary}[theorem]{Corollary}
\newtheorem{claim}[theorem]{Claim}

\theoremstyle{definition}

\newtheorem*{remark}{Remark}

\newtheorem{definition}[theorem]{Definition}

\numberwithin{equation}{section}

\newcommand{\Vol}{{\rm Vol}}
\newcommand{\Map}{{\rm Map}}

\newcommand{\oct}{{\rm oct}}
\newcommand{\area}{{\rm area}}

\newcommand{\hyp}{{\mathcal{H}}}
\newcommand{\SH}{{\mathcal{SH}}}
\newcommand{\id}{{\rm id}}
\newcommand{\CP}{{\mathcal P}}
\newcommand{\Teich}{\rm{Teich}}

\newcounter{mcomments}

\newcounter{ecomments}

\newcounter{ccomments}

\newcounter{acomments}

\title[A lower bound on volumes of end-periodic mapping tori]{A lower bound on volumes of \\ end-periodic mapping tori}

\author[E. Field, A. Kent, C. Leininger, M. Loving]{Elizabeth Field, Autumn Kent, \\ Christopher Leininger, and Marissa Loving}

\date{\today}

\hyphenation{home-o-morph-isms}

\begin{document}

\maketitle

\begin{abstract}
    We provide a lower bound on the volume of the compactified mapping torus of a strongly irreducible end-periodic homeomorphism $f: S \to S$. This result, together with work of Field, Kim, Leininger, and Loving \cite{EndPeriodic1}, shows that the volume of $\overline M_f$ is comparable to the translation length of $f$ on a connected component of the pants graph $\mathcal P(S)$, extending work of Brock \cite{Brock-mappingtorus-vol} in the finite-type setting on volumes of mapping tori of pseudo-Anosov homeomorphisms.
\end{abstract}

\section{Introduction}

A central theme in the post-geometrization study of $3$-manifolds is to clarify the relationship between geometric and topological features of a manifold.  Fibered hyperbolic $3$-manifolds provide a particularly rich class of examples in this vein, as their topology is completely determined by the monodromy homeomorphism $f \colon S \to S$ of their fiber surface $S$, which realizes the manifold as a mapping torus $M_f$. When $S$ is of finite type, the isotopy class of $f$ is an element of the mapping class group of $S$, and the abundant collection of actions of this group provide a wealth of information about the geometry of $M_f$. 
The current paper is motivated by such a connection due to Brock \cite{Brock-mappingtorus-vol}, who showed that the hyperbolic volume of $M_f$ is comparable to the translation length $\tau(f)$ of $f$  acting on the pants graph $\mathcal P(S)$.
More precisely, he shows that there are constants $K_1$ and $K_2$, depending only on the topology of $S$, such that
\begin{equation}\label{Eq:Brock} K_2 \tau(f) \leq \Vol(M_f) \leq K_1 \tau(f).
\end{equation}

When $S$ has infinite type and $f \colon S \to S$ is a {\em strongly irreducible end-periodic} homeomorphism, earlier work of Field, Kim, Leininger, and Loving \cite{EndPeriodic1} provides an analogous upper bound on the hyperbolic volume of the {\em compactified} mapping torus  with its totally geodesic boundary structure. Namely,
    \[
        \Vol(\overline M_f) \leq C_1 \tau(f)
    \] 
where $C_1$ is in fact simply the volume of a regular ideal octahedron.
We complete the analogy with \eqref{Eq:Brock} by establishing the following lower bound.

\begin{theorem} \label{T:main.theorem} 
For any surface $S$ with finitely many ends, each accumulated by genus, and any strongly irreducible end-periodic homeomorphism $f \colon S \to S$, we have
\[
   C_2 \tau(f) \leq \Vol(\overline M_f),
\]
where the constant $C_2$ depends only on the {\em capacity} of $f$.
\end{theorem}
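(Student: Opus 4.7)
The plan is to adapt Brock's sweepout argument \cite{Brock-mappingtorus-vol} to the end-periodic setting, with the capacity of $f$ taking the place of the finite-type complexity $\xi(S)$. I would construct a one-parameter family of pleated or simplicial hyperbolic surfaces $\Sigma_t \subset \overline{M}_f$, for $t \in [0,1]$, each in the isotopy class of the fiber, spanning a fundamental period of the circle fibration so that $\Sigma_1$ is the monodromy image of $\Sigma_0$. To each $\Sigma_t$ I would associate a pants decomposition $P_t$ of $S$ by taking the short curves of $\Sigma_t$ that lie in the compact core of $\overline{M}_f$ and extending by a fixed reference decomposition outside. Since $\tau(f) \leq d_{\mathcal{P}(S)}(P_0, f(P_0))$ by definition of translation length, it suffices to bound the total length of the resulting path in $\mathcal{P}(S)$ by a multiple of $\Vol(\overline{M}_f)$.

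I would proceed in three main steps. First, exploit the hyperbolic structure on $\overline{M}_f$, which has finite volume and totally geodesic boundary, to produce pleated surface representatives of the fiber class for each $t$; the standard pleated-surface machinery should adapt to the boundary setting, since one may require pleated maps to meet $\partial \overline{M}_f$ along geodesic laminations of the boundary. Second, use the capacity of $f$ together with the Margulis lemma to uniformly bound the number and location of short curves on each $\Sigma_t$, forcing them into a fixed-complexity subsurface where Brock's finite-type pants-graph estimates apply. Third, establish a local volume-to-moves estimate in the spirit of Brock's original argument: the decompositions $P_t$ and $P_{t'}$ should differ by at most a bounded number of elementary moves per unit volume of the region between $\Sigma_t$ and $\Sigma_{t'}$, with constant depending only on the capacity. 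Chaining these estimates over $[0,1]$ gives $d_{\mathcal{P}(S)}(P_0, f(P_0)) \leq C_2^{-1} \Vol(\overline{M}_f)$, yielding the theorem.

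The main obstacle is the second step. In the infinite-type setting there is no a priori bound on the number of short curves that a pleated surface modelled on $S$ can carry; in principle, short curves could appear near any of the ends accumulated by genus. The capacity of $f$ must be used to supply precisely the uniform control needed: together with the periodic structure of $f$, it should confine the geometrically nontrivial behavior of each $\Sigma_t$ to a bounded-complexity subsurface corresponding to a compact region of $\overline{M}_f$. Proving this uniform control, and then verifying that the relevant short-curve subsurfaces can be chosen coherently along the sweepout so that the elementary-move count depends only on swept volume and capacity, is where the genuinely new, infinite-type input is required; once this is in place, the rest of the argument should follow Brock's finite-type template.
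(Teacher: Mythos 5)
Your outline correctly identifies that Brock's sweepout strategy is the right template and that the capacity of $f$ must substitute for the finite-type complexity, but there are two substantive gaps where the proposal waves at difficulties without supplying the ideas that actually resolve them.

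First, the confinement mechanism. You propose using the Margulis lemma to force the short curves of each $\Sigma_t$ into a bounded-complexity subsurface, but the Margulis lemma alone gives no such confinement on an infinite-type pleated surface; an ordinary pleated representative of the fiber in $\overline{M}_f$ could in principle have short curves scattered arbitrarily far out the ends. What the paper actually does is different in kind: it builds \emph{minimally well-pleated surfaces}, which by construction send everything outside a minimal core $Y$ into the totally geodesic boundary $\partial\overline{M}_f$. The a priori length bound then comes from (i) showing a minimal core is boundary-incompressible (\Cref{P:minimal core bdy-inc}, via flow-compressing disks), and (ii) combining this with Basmajian's collar lemma to bound $\ell_\sigma(\partial Y)$ in terms of $\chi(f)$ and $\xi(f)$ (\Cref{L:bounding the boundary}). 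One then enlarges $Y$ to $\Sigma = Y \cup \Delta_+$ and applies Parlier's relative Bers theorem. None of this is implicit in a Margulis-lemma argument, and the boundary-incompressibility step in particular uses the strong irreducibility of $f$ in an essential, non-obvious way.

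Second, and more seriously, you do not address the overcounting problem. Brock's count of bounded-length curves along the sweepout can vastly overestimate the number of distinct short geodesics in the mapping torus, because many of the curves in the interpolation may project to the same closed geodesic. Brock absorbs the excess into an uncontrolled constant $n_0$ by iterating $f$ and taking a limit $j\to\infty$. This device fails here: iterating $f$ grows $\xi(f^j) = j\xi(f)$ linearly, so the finite-type subsurface that supports the interpolation blows up and the Masur--Minsky constants degenerate. The paper's replacement is the purely topological \Cref{L:no-closed-curves}: for all $k \geq 2\xi(Y)$, strong irreducibility forces $f^k(Y) \cap Y$ to contain no essential closed curve. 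This yields a \emph{uniform} power $N$, depending only on capacity, such that no two curves in the interpolation differ by an element of $\langle f^N\rangle$; hence they inject into $\mathcal{G}_{L_2}(\overline{M}_{f^N})$, and $\Vol(\overline{M}_{f^N}) = N\Vol(\overline{M}_f)$ finishes the count. Without something playing the role of \Cref{L:no-closed-curves}, the passage from a count of curves along the sweepout to a volume lower bound does not go through. Your third step ("a bounded number of elementary moves per unit volume of the region between $\Sigma_t$ and $\Sigma_{t'}$") gestures at this but does not confront the projection-to-the-quotient issue, which is precisely where the new idea is needed.
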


The {\em capacity} of $f$ is a pair of numbers that records two pieces of \emph{finite} topological data that describe the action of $f$ on $S$---see \Cref{S:complexities}. Since the surface $S$ has infinite topological type, the dependence of $C_2$ on the capacity of $f$ serves as a substitute for the dependence of Brock's $K_2$ on the topology of the finite-type fiber.

There are two pieces of data naturally associated to any $f$-invariant component $\Omega \subset \mathcal P(S)$: 
\begin{enumerate}
    \item the translation length of $f$ on $\Omega$, denoted $\tau_{\Omega}(f)$, and
    \item an induced pants decomposition $P_{\Omega}$ of $\partial \overline M_f$.
\end{enumerate}
See \Cref{sec:bdd-length-inv-comps} for a detailed description.  The following theorem provides a component that (coarsely) optimizes both of these, and ties the action of $f$ on $\mathcal P(S)$ to a bounded length pants decomposition of $\partial \overline M_f$.

\newcommand{\boundedlengththm}{Given $f \colon S \to S$, a strongly irreducible end-periodic homeomorphism, there is a component $\Omega \subset \mathcal P(S)$ and $E >0$ (depending on the capacity of $f$), so that each curve in $P_\Omega \subset \partial \overline M_f$ has length at most $E$, and so that $\tau_\Omega(f) \leq E \tau(f)$.}
\begin{theorem} \label{T:bdd len compts}
   \boundedlengththm 
\end{theorem}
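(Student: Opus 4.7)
The plan is to construct the component $\Omega$ from a short pants decomposition of the boundary $\partial \overline M_f$, and then bound the translation length by coupling a geometric sweep-out of $\overline M_f$ with the volume upper bound of \cite{EndPeriodic1}.

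First, $\partial \overline M_f$ is a finite-type hyperbolic surface whose topological type is controlled by the capacity of $f$: each component of $\partial \overline M_f$ is (the hyperbolic structure on) a quotient of a fundamental domain for the end-periodic action of a power of $f$ on an end of $S$. By Bers's theorem, there is a pants decomposition $Q$ of $\partial \overline M_f$ with every curve of hyperbolic length at most a Bers constant $B$ depending only on the capacity. I would then lift $Q$ to $S$: each curve of $Q$ pulls back via the end-periodic quotient to an $f$-periodic multicurve on the corresponding end. Extending the union of these lifts by an arbitrary pants decomposition of the compact core of $S$ yields a pants decomposition $P$ of $S$; let $\Omega \subset \mathcal P(S)$ be its component. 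Because $f$ permutes the periodic lifts on the ends, $f(P)$ and $P$ agree outside the core and hence differ by only finitely many elementary moves, so $\Omega$ is $f$-invariant and $P_\Omega = Q$. This establishes the bounded-length half of the theorem, with $E \geq B$.

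The main difficulty is showing $\tau_\Omega(f) \leq E \tau(f)$. My plan is to exhibit an efficient $f$-equivariant path in $\Omega$ geometrically, in the spirit of Brock's original proof but adapted to the compactified setting. Specifically, I would take a sweep-out (or near-fibration) of $\overline M_f$ by surfaces isotopic to the capped fiber, equip each level surface with a Bers-short pants decomposition, and discretize the sweep-out so that consecutive decompositions differ by a single elementary move in $\mathcal P(S)$. This produces a path from a chosen $P^* \in \Omega$ to $f(P^*)$ whose combinatorial length is proportional to $\Vol(\overline M_f)$. Combining with $\Vol(\overline M_f) \leq C_1 \tau(f)$ from \cite{EndPeriodic1} then yields $\tau_\Omega(f) \leq E \tau(f)$.

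The hardest step is controlling the sweep-out so that the induced path actually lies in the specific component $\Omega$ constructed above and proceeds by single elementary moves. Strong irreducibility should rule out degeneration along an $f$-invariant multicurve, while the fact that there are only finitely many topological types of Bers-short pants decompositions on a capacity-bounded surface converts bounded geometric motion into a bounded number of combinatorial moves. Matching the ends of the sweep-out to the specific end-periodic boundary structure of $Q$, so the resulting path genuinely stays inside $\Omega$ rather than some nearby invariant component, is what I expect to be the main technical obstacle.
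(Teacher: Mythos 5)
Your high-level strategy---build a component $\Omega$ whose boundary pants decomposition $P_\Omega$ has bounded length, bound $\tau_\Omega(f)$ in terms of $\Vol(\overline M_f)$, and close the loop with the upper bound $\Vol(\overline M_f) \leq V_\oct \tau(f)$ from \cite{EndPeriodic1}---is exactly the paper's strategy. Your construction of $\Omega$ is essentially right, with one caveat: the $\langle f\rangle$-invariant lifts of $Q$ to $\mathcal U_+$ and $\mathcal U_-$ overlap substantially inside $S$ (the escaping sets are not disjoint), so you cannot take both full lifts and simply fill in a core. The clean fix is to take the lift of $Q$ only on tight nesting neighborhoods $U_\pm$ and extend over the compact part, which is how $P$ is built in \Cref{L:bounded pants decomp}; with that modification, $P_\Omega = Q$ and the bounded-length half goes through.

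The genuine gap is in the second half. You propose to prove $\tau_\Omega(f) \lesssim \Vol(\overline M_f)$ from scratch by a sweep-out and you correctly flag the crux: forcing the sweep-out's pants decompositions to lie in the chosen component $\Omega$ rather than some nearby invariant one. But this bound, for a specific $\Omega_0$ (the component of the $P$ from \Cref{L:bounded pants decomp}), is already what the proof of \Cref{T:main.theorem} establishes: the displayed chain of inequalities there shows $d_{\CP}(P,f(P)) \leq \tfrac{1}{C_2}\Vol(\overline M_f)$, and since $\tau_{\Omega_0}(f) \leq d_{\CP}(P,f(P))$, the needed estimate is immediate. The device that makes the interpolation land in $\Omega_0$---resolving exactly your "main technical obstacle"---is \Cref{L:nearly pleated}: rather than reading bounded-length pants decompositions off simplicial hyperbolic surfaces, as Brock does (and as your sweep-out plan does), the paper constructs the simplicial hyperbolic surfaces to realize the pants decompositions $P_\alpha = P\cap\Sigma$ and $P_\omega = f(P)\cap\Sigma$ determined by the minimally well-pleated surface. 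Note also that this requires $P$ to have controlled $\sigma$-length on the core $\Sigma$, not merely on $\partial\overline M_f$; your "arbitrary pants decomposition of the compact core" would not provide this, whereas the paper gets it from \Cref{L:bounding the boundary} (Basmajian's collar plus boundary-incompressibility of a minimal core) followed by the relative Bers lemma, \Cref{thm:relative-bers}. So carrying out your plan would require rediscovering both \Cref{L:bounded pants decomp} and \Cref{L:nearly pleated}, at which point you would in effect be reproving \Cref{T:main.theorem} instead of invoking it.
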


\noindent We expect this theorem to be more generally useful in future analysis of the hyperbolic geometry surrounding depth-one foliations.

\subsection*{Historical notes and future directions}

End-periodic homeomorphisms are an important class of homeomorphisms of infinite-type surfaces, due in large part to their connection with depth-one foliations of 3-manifolds.  Indeed, after collapsing certain trivial product foliation pieces, a co-oriented, depth-one foliation of a $3$-manifold is obtained by gluing together finitely many compactified mapping tori of end-periodic homeomorphisms.  Such foliations (and more generally finite-depth foliations) were studied in detail by Cantwell and Conlon \cite{CantConPB}, and arise in Gabai's analysis \cite{Gabai-Fol1,Gabai-Fol1,Gabai-Fol3} of the Thurston norm \cite{T-Norm}. In \cite{T-Norm}, Thurston also observed that depth-one foliations  occur naturally as limits of foliations by fibers in sequences of cohomology classes limiting projectively to the boundary of the cone on a fibered face of the Thurston norm ball.

In unpublished work, Handel and Miller began a systematic study of end-periodic homeomorphisms using laminations in the spirit of the modern interpretation of Nielsen's approach to the Nielsen--Thurston Classification (see Gilman \cite{Gilman}, Miller \cite{Miller}, Handel--Thurston \cite{HandThur}, and Casson--Bleiler \cite{CasBle}).  Some aspects of this work were described and developed by Fenley in \cite{FenleyThesis1989,Fenley-depth-one}, and more recently expanded upon by Cantwell, Conlon, and Fenley in \cite{CC-book}. The analogy between strongly irreducible end-periodic homeomorphisms and pseudo-Anosov homeomorphisms was further strengthened by work of Patel--Taylor showing that many end-periodic homeomorphisms admit loxodromic actions on various arc and curve graphs of infinite-type surfaces \cite{PatelTaylor}.  

Recent work of Landry, Minsky, and Taylor \cite{LandryMinskyTaylor2023} further studies the behavior of Thurston's depth-one foliations \cite{T-Norm} arising from the boundaries of the fibered faces. In particular, using how the lifts of a first return map act on the boundary circle of a depth one leaf lifted to the universal cover, they relate the invariant laminations for end-periodic homeomorphisms to the  laminations of the pseudo-Anosov flow associated to the fibered face.  Moreover, using veering triangulations \cite{Agol-veering,gueritaud-veering}, they show that any compactified mapping torus appears in the boundary of some fibered face of some fibered $3$--manifold.

Fenley \cite{FenleyCT} provided the first connection between the hyperbolic geometry of a $3$-manifold and its depth-one foliations, proving that when the end-periodic monodromies are irreducible, the depth-one leaves admit Cannon--Thurston maps from the compactified universal covers $\overline{\mathbb H}^2 \to \overline{\mathbb H}^3$.  This is an analogue of Cannon and Thurston's seminal work in the finite-type case (circulated as a preprint for decades before appearing in \cite{CanThu}).  Unlike Cannon and Thurston's map, Fenley's boundary map is not surjective, but rather surjects the limit set of the compactified mapping torus, which is a Sierpinski carpet.  In the course of his arguments, Fenley provides a quasi-isometric comparison between the hyperbolic metric and a (semi)-metric defined by the foliation, which also parallels Cannon and Thurston's approach.

The comparison between the hyperbolic metric and the metric defined by the fibration, as studied by Cannon and Thurston, was greatly elaborated on by Minsky \cite{MinskyTeich} to provide uniform estimates depending on the injectivity radius of the $3$-manifold and the genus of the fiber.  Building on this, and the deep machinery developed by Masur--Minsky \cite{MasurMinsky.1999,MasurMinsky.2000}, Minsky \cite{ELC1} and Brock--Canary--Minsky \cite{ELC2} constructed combinatorial, uniformly biLipschitz models for fibered hyperbolic $3$-manifolds.

Brock's volume estimates \cite{Brock-mappingtorus-vol} above were used to prove his analogous volume estimates in terms of the Weil--Petersson translation length on Teichm\"uller space, but with less control over the constants.  A more direct proof of the Weil--Petersson upper bound, with explicit constants, was proved by Brock--Bromberg \cite{BrockBromberg} and Kojima--McShane \cite{KojimaMcShane} using renormalized volume techniques.

The techniques developed here, and in \cite{EndPeriodic1}, combined with forthcoming work of Bromberg, Kent, and Minsky \cite{BKM}, provides the framework to prove volume estimates for closed hyperbolic $3$-manifolds with depth-one foliations.  One might ultimately hope for a uniform biLipschitz model, but the tools needed to guarantee one seem considerably more difficult.  First steps in this direction are taken by Whitfield in \cite{Whitfield}, where she extends a result of Minsky \cite[Theorem~B]{Minsky.2000} to the infinite-type setting to produce short curves whose lengths are bounded in terms of subsurface projections. From a Teichm\"uller-theoretic perspective, the fact that end-periodic homeomorphisms can be made to act isometrically near the ends suggests the possibility of an action on a Teichm\"uller space with finite translation length, and it is natural to wonder on the relation of such a length to the volume.

Finally, some important motivation for this work comes from the study of big mapping class groups. In particular, \cite[Problem~1.7]{AIM-PL} asks for a characterization of big mapping classes whose mapping tori admit complete hyperbolic metrics. One hope is that better understanding the geometry of end-periodic mapping tori will provide some insights into giving a complete solution to this problem. 

\begin{remark}
We note that this relationship between volumes of hyperbolic 3-manifolds and distance in the pants graph has also been explored in a different setting by Cremaschi--Rodr\'{\i}guez-Migueles--Yarmola \cite{CR-MY} who give upper and lower bounds analogous to those of Brock. 
\end{remark}

\subsection*{Comparison with finite-type case}

We briefly outline here  Brock's strategy for his lower bound \cite{Brock-mappingtorus-vol}, point out the complications that arise when adapting the strategy to our setting, and discuss how we address these challenges.

Brock's proof involves controlling the number and location of bounded length curves in the mapping torus, as each bounded length curve provides a definite contribution to the volume \cite[Lemma~4.8]{Brock-convex-vol}.
To produce bounded length curves, Brock works in the infinite cyclic cover $S \times \mathbb R$ of the mapping torus, and constructs an interpolation between a {\em simplicial hyperbolic surface} \cite{Canary.CoveringTheorem,Bonahon} homotopic to the inclusion of $S$ and the image of this surface under the generator of the deck group.  The deck group acts like $f$ on the $S$ factor, and the interpolation produces a sequence of bounded length pants decompositions, starting with one on the initial surface and ending with its $f$--image in the translate.  He then shows that {\em the number of curves} arising in this sequence provides an upper bound on distance in the pants graph, and hence a bound on the translation length of $f$ \cite[Lemma~4.3]{Brock-convex-vol}.

The interpolation between the two simplicial hyperbolic surfaces can overlap significantly with its translates by the deck transformation, leading to an overestimate in the number of bounded length curves in the mapping torus (as many curves may project to the same curve).  To account for this, Brock first situates a neighborhood of the entire interpolation inside some fixed, but {\em uncontrolled} number $n_0$ of consecutive translates of a fundamental domain for the covering action.  The concatenation of any $j > 0$ consecutive translates of the interpolation produces a path between a pants decomposition and its image under $f^j$, all of whose curves are of bounded length and situated inside $j+n_0$ translates of the fundamental domain.  The number of curves that occur in the sequence of pants decompositions is an upper bound on $j$ times the translation length, and a lower bound on $j+n_0$ times the volume.  Thus, taking $j \to \infty$ and dividing both quantities by $j$, the $n_0$ term disappears, proving the required lower bound on volume in terms of pants translation length.

\begin{figure}[htb!]
    \centering
    \def\svgwidth{6in}
    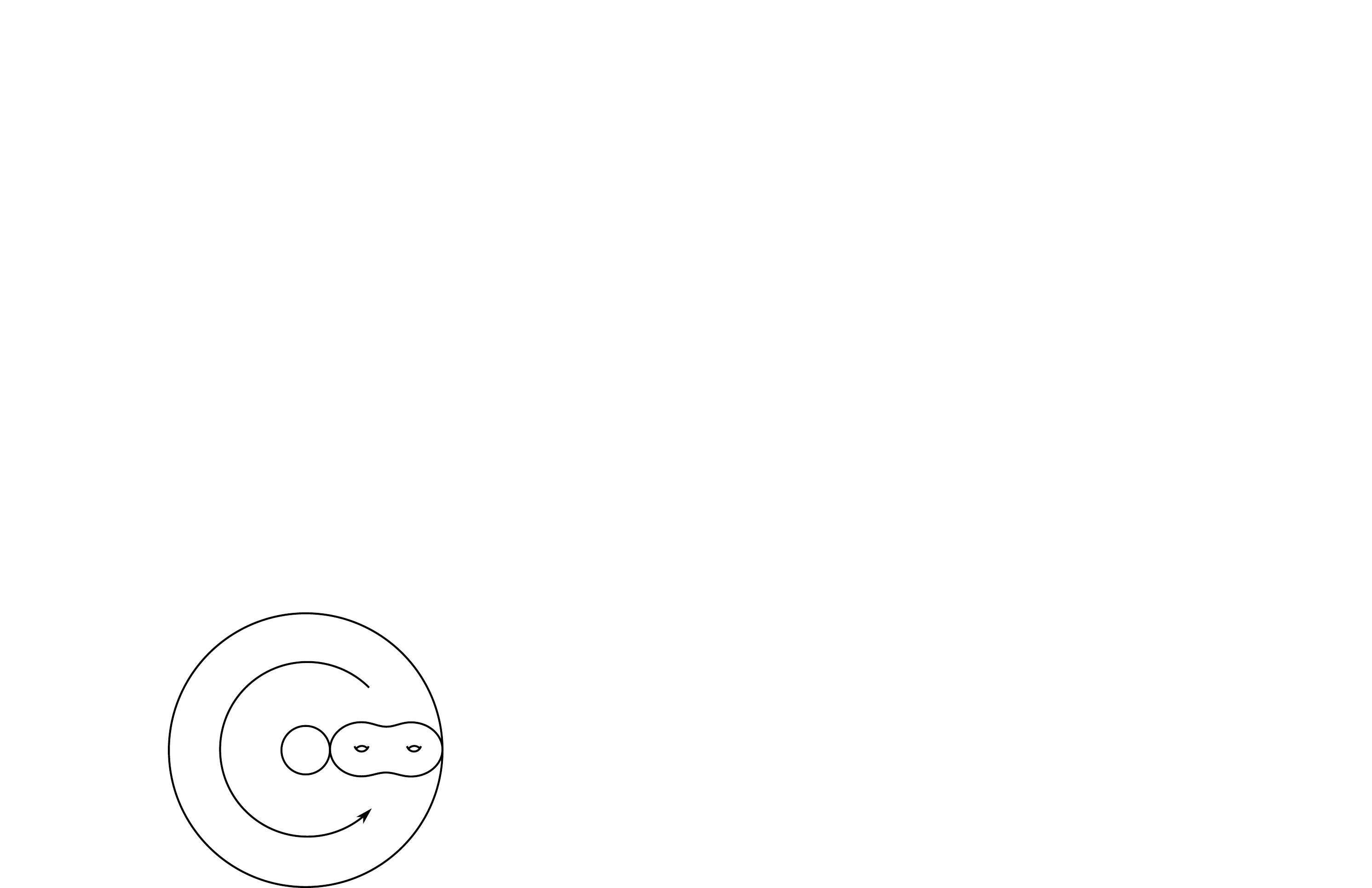
    \caption{In the figure on the right, when we ``interpolate" from left to right through $\widetilde M_{\infty}$ the ``core" of $f$ slides along $S$ as we zip along the bottom boundary and unzip along the top boundary. In both cases, $F$ is the covering transformation given by $F(x, t) = (f(x), t-1)$.} 
    \label{fig:idea-lower-bound}
\end{figure}

Our proof of \Cref{T:main.theorem} involves a similar  strategy. Notably, Brock's lower bound on volume in terms of the number of bounded length curves still serves as the primary mechanism for controlling the volume.  We also carry out many of the arguments in the infinite cyclic cover, though in our situation this cover does not have finitely generated fundamental group. See \Cref{fig:idea-lower-bound} for a cartoon of the differences between the infinite cyclic covers in the two settings.

Several of the ideas in Brock's proof break down in fundamental ways in our setting, and we discuss these in turn. We make extensive use of pleated and simplicial hyperbolic surfaces in the compactified mapping torus with totally geodesic boundary as well as in its infinite cyclic cover.  The infinite-type setting demands some care, but no serious issues arise here.

The first real obstacle we encounter is that the bound on pants distance in terms of the number of curves that appear in all of the pants is not immediately applicable as it relies on the work of Masur and Minsky \cite{MasurMinsky.1999, MasurMinsky.2000}, where the constants depend on the topological type of the surface.  While our pants decompositions contain infinitely many curves, a finite sequence of pants moves takes place on a finite-type subsurface.  Fixing the capacity of $f$ provides an initial bound on the topological type of this subsurface, but even under this condition, two additional issues arise.  First, iteration of the map increases the size of this subsurface linearly in the power, and so the strategy of iterating and taking  limits is not viable.  The second issue is that there is no universal bound on the length of curves in a shortest pants decomposition of a finite-type surface with boundary.

To address the second issue, we construct {\em minimally well-pleated surfaces} in the compactified mapping torus, which send ``as much of the (infinite-type) surface as possible" into the boundary---see \Cref{S:viscera defined} and \Cref{section.pleated}.  Appealing to Basmajian's collar lemma \cite{Basmajian}, the  bound on capacity produces {\em a priori} bounds on the length of the boundary of a minimal core---see \Cref{L:bounding the boundary}.  After passing to a uniform power, we may enlarge the core to ``support" the interpolation through simplicial hyperbolic surfaces, which \textit{does} have a bound on the length of its boundary---see \Cref{S:interpolation}.  This enlarged surface is of bounded topological type, again thanks to bounded capacity, and, in this setting, there is a uniform bound on the length of a bounded length pants decomposition---see \Cref{thm:relative-bers}.

As we cannot iterate and take a limit as Brock does, we address the first issue by essentially gaining control on the ``uncontrolled" constant $n_0$ in Brock's proof.  Interestingly, the feature of end-periodic homeomorphisms that forces the capacity to grow linearly under powers is, along with strong-irreducibility, what comes to the rescue.  Namely, as all of our bounded length curves are homotopic into the enlargement of the core, we can find a uniform power (depending only on the capacity) so that for all higher powers the images of these bounded length curves are {\em not} homotopic into this subsurface (see \Cref{L:no-closed-curves}).  In particular, no two curves in our bounded length set project to the same curve in the mapping torus for this uniform power, and this guarantees that they all contribute to the volume.

\subsection*{Outline of the paper} 
We begin in \Cref{section.prelim} with preliminaries on end-periodic homeomorphisms and their mapping tori, including the definition of capacity.  In \Cref{section.complexity} we establish key {\em topological} features of a strongly irreducible end-periodic homeomorphism acting on a core, and describe how the core sits in the compactified mapping torus. The details for the pleated surface technology we need, and the resulting uniform {\em geometric} features for a strongly irreducible end-periodic homeomorphism are described in \Cref{section.pleated}. The kinds of simplicial hyperbolic surfaces we will use, as well as our applications of these, are described in \Cref{section.interpolation}.  We assemble all the ingredients into the proof of \Cref{T:main.theorem} in \Cref{section.volume}. Finally, in \Cref{sec:bdd-length-inv-comps}, we prove \Cref{T:bdd len compts}.

\section{Preliminaries}\label{section.prelim} 

In this section we set some notation, recall some of the facts that we will need, particularly from \cite{EndPeriodic1}, and define the notion of ``capacity."

\subsection{End-periodic homeomorphisms} \label{subsection:end-periodic}

We restrict our attention to surfaces of infinite-type with finitely many ends, each accumulated by genus, and without boundary. The interested reader can find a more general discussion of end-periodic homeomorphisms in \cite{FenleyCT, Fenley-depth-one, CC-book}.

A homeomorphism of an infinite-type surface $S$ is \textit{end-periodic} if there is an $m > 0$ such that, for each end $E$ of $S$, there is a neighborhood $U_E$ of E so that either
\begin{itemize}
    \item[(i)] $f^m(U_E) \subsetneq U_E$ and the sets $\{f^{nm}(U_E)\}_{n >0}$ form a neighborhood basis of $E$; or
    \item[(ii)] $f^{-m}(U_E) \subsetneq U_E$ and the sets $\{f^{-nm}(U_E)\}_{n > 0}$ form a neighborhood basis of $E$.
\end{itemize}
We say that $E$ is an \textit{attracting end} in the first case, and a \textit{repelling end} in the second.
The neighborhoods $U_E$ are \textit{nesting neighborhoods} of the ends, and when convenient we assume (as we may) that we have chosen disjoint nesting neighborhoods for distinct ends.
We denote the union of the neighborhoods of the attracting ends $U_+$ and write $U_-$ for the union of the neighborhoods of the repelling ends.  If $f^{\pm 1}(U_\pm) \subset U_\pm$, and $\partial \overline U_\pm$ is a union of simple closed curves, then we say that $U_\pm$ are {\em tight nesting neighborhoods}.  Every end-periodic homeomorphism admits tight nesting neighborhoods. For instance, the good nesting neighborhoods from \cite{EndPeriodic1} are a particular example of tight nesting neighborhoods, with the additional assumption that each component of $\overline U_\pm$ has a single boundary component.

A compact subsurface $Y \subset S$ is a {\em core} for $f$ if $S-Y$ is a disjoint union of tight nesting neighborhoods $U_+$ and $U_-$. Given a core $Y$, define the {\em junctures} $\partial_+Y$ and $\partial_-Y$ to be the boundary components meeting $U_+$ and $U_-$, respectively.  Note that there are infinitely many choices of cores for $f$ (as a given core can always be enlarged).

Given a core $Y$ for $f$, a hyperbolic metric on $S$
for which $f|_{U_+} \colon  U_+ \to U_+$ and $f^{-1}|_{U_-} \colon U_- \to U_-$ are isometric embeddings is said to be {\em compatible with $Y$}.  Adjusting $f$ by an isotopy if necessary, there are always metrics which are compatible with a given core $Y$, see \cite{Fenley-depth-one}.  

\begin{remark}
    We will adjust our homeomorphism $f$ by an isotopy several more times in what follows.  While we could impose all the conditions we will need at the outset, each will require additional discussion and set-up, and so it seems natural to impose each condition as they arise.  It will be evident that each additional condition does not contradict the previous ones, but we will indicate any subtleties as they arise.
\end{remark}

Define 
\[ 
    \mathcal U_+ = 
        \bigcup_{n \geq 0} f^{- n}(U_+) 
            \quad \mbox{ and } \quad 
        \mathcal U_- = \bigcup_{n \geq 0} f^{n}(U_-),
\]
which are the \textit{positive} and \textit{negative escaping sets for $f$}, respectively. We note that any choice of nesting neighborhoods will give rise to the same escaping sets $\mathcal U_\pm$ (depending only on the homeomorphism $f$). 
With these assumptions, the restrictions $\langle f\rangle|_{\mathcal U_\pm}$ act cocompactly on $\mathcal U_\pm$ with quotients $S_\pm = \mathcal U_\pm/\langle f \rangle$, which may be disconnected, see, \textit{e.g.} \cite[Lemma 2.4]{EndPeriodic1}. 

In the following, \textit{curve} and \textit{line} refer to proper homotopy classes of essential simple closed curves and essential properly embedded lines, respectively. A curve $\alpha$ is called \emph{reducing} with respect to an end-periodic homeomorphism $f$ if there exists $m, n \in \mathbb Z$ with $m < n$ and such that $f^n(\alpha)$ (has a representative that) is contained in a nesting neighborhood of an attracting end and $f^m(\alpha)$ (has a representative that) is contained in a nesting neighborhood of a repelling end. 

\begin{definition}[Strong irreducibility]\label{Irreducibility/Strong irreducibility} An end-periodic homeomorphism, ${f:S\to S}$, is \emph{strongly irreducible} if it has no \emph{periodic curves}, no \emph{periodic lines}, and no \emph{reducing curves}. \end{definition}

\subsection{Mapping tori and their compactifications} \label{subsection:mapping-tori}

We define a partial compactification of $S \times (-\infty, \infty)$ inside $S \times [-\infty, \infty]$ by 

\[ 
    \widetilde M_\infty = \{ (x,t) \in S \times [-\infty,\infty] \mid x \in \mathcal U_\pm \mbox{ if } t = \pm \infty \},
\]
and define $F \colon \widetilde M_\infty \to \widetilde M_\infty$ by $F(x,t) = (f(x),t-1)$, where $\pm \infty - 1 = \pm \infty$.  
The group $\langle F \rangle$ acts properly discontinuously and cocompactly on $\widetilde M_\infty$, see, \textit{e.g.} \cite[Lemma 3.2]{EndPeriodic1}. The quotient $p \colon \widetilde M_\infty \to \overline M_f = \widetilde M_\infty/\langle F \rangle$ is a compact manifold with boundary naturally homeomorphic to $S_- \cup S_+$ and whose interior is the mapping torus $M_f$ of $f$ which we call the {\em compactified mapping torus}. The manifold $\overline M_f$ was first defined by Fenley \cite{Fenley-depth-one}. It is particularly nice when $f$ is strongly irreducible. 

\begin{theorem}{\cite[Proposition 3.1]{EndPeriodic1}}
Let $f: S\to S$ be a strongly irreducible, end-periodic homeomorphism of a surface with finitely many ends, all accumulated by genus. Then $\overline M_f$, is a compact, irreducible, atoroidal, acylindrical $3$-manifold, with incompressible boundary.
\end{theorem}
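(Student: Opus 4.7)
The plan is to establish the five properties (compact, irreducible, atoroidal, acylindrical, and with incompressible boundary) by leveraging the structure of the infinite cyclic cover $\widetilde M_\infty \to \overline M_f$ together with the strong irreducibility of $f$. Compactness is immediate from the properly discontinuous, cocompact action of $\langle F \rangle$ on $\widetilde M_\infty$.

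For irreducibility and incompressible boundary, the key observation is that $\widetilde M_\infty$ is a $3$-manifold with boundary $\mathcal U_+ \times \{+\infty\} \sqcup \mathcal U_- \times \{-\infty\}$, and it deformation retracts onto $S \times \{0\} \cong S$ by a retraction sliding the $t$-coordinate toward $0$ (using that the $\mathcal U_\pm$ are $f^{\pm 1}$-invariant, so boundary points can be pushed inward within $\mathcal U_\pm \times \mathbb R$). This yields $\pi_1(\widetilde M_\infty) \cong \pi_1(S)$, a free group, and $\pi_2(\widetilde M_\infty) = 0$. Since $\pi_2(\overline M_f) = \pi_2(\widetilde M_\infty) = 0$ and $\pi_1(\overline M_f)$ contains the non-cyclic free group $\pi_1(S)$ (so $\overline M_f \neq S^2 \times S^1$), the sphere theorem and prime decomposition yield irreducibility. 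For incompressibility of $\partial \overline M_f$, suppose an essential curve $\gamma \subset S_+$ is null-homotopic in $\overline M_f$; by the loop theorem it bounds an embedded disk. If $\gamma$ lifts to a closed curve in $\mathcal U_+ \times \{+\infty\}$, then the disk lifts too, and null-homotopy in $\widetilde M_\infty$ contradicts $\pi_1$-injectivity of the subsurface inclusion $\mathcal U_+ \hookrightarrow S$. Otherwise $[\gamma]$ projects nontrivially to $\pi_1(\overline M_f)/\pi_1(\widetilde M_\infty) = \mathbb Z$, again contradicting null-homotopy.

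For atoroidality, suppose $T \subset \overline M_f$ is an essential torus. Since $\mathbb Z^2 \cong \pi_1(T)$ cannot embed in the free group $\pi_1(\widetilde M_\infty)$, the image of $\pi_1(T)$ in the $\mathbb Z$-quotient is nontrivial with cyclic kernel; lifting yields an embedded, $F^n$-invariant annulus $\tilde T \subset \widetilde M_\infty$ for some $n \geq 1$. A standard normal form argument isotopes $\tilde T$ to be transverse to the $\mathbb R$-fibers of $S \times \mathbb R$, producing a simple closed curve $\alpha \subset S$ whose proper isotopy class is preserved by $f^n$, contradicting the absence of periodic curves. Acylindricity follows by a parallel analysis: an essential annulus $A \subset \overline M_f$ with boundary on $\partial \overline M_f$ lifts to an $F^n$-invariant annulus in $\widetilde M_\infty$; when both boundary components lie on the same $S_{\pm}$, the argument produces a periodic simple closed curve on $S$ (appealing, if needed, to atoroidality to rule out doubling $A$ into a torus), while when one component lies on each of $S_-$ and $S_+$, the core descends to a simple closed curve $\alpha \subset S$ which is eventually pushed into $\mathcal U_+$ under forward iterates of $f$ and into $\mathcal U_-$ under backward iterates, exhibiting $\alpha$ as a reducing curve and contradicting strong irreducibility.

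The main obstacle is the careful handling of lifted essential tori and annuli in $\widetilde M_\infty$: producing the simple closed curve $\alpha$ on $S$ from the core of the lifted annulus, and verifying the proper isotopy statements (periodicity or reducibility) required to contradict strong irreducibility. This requires a normal form argument for incompressible annuli, adapted to the infinite-type setting where the relevant notion --- proper isotopy at the ends --- must be tracked alongside ordinary homotopy in the cover.
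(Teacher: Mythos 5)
This statement is quoted from \cite[Proposition~3.1]{EndPeriodic1} and is not proved in the present paper, so there is no in-paper proof to compare against; I can only assess your outline on its own terms. Your overall route --- work in the infinite cyclic cover $\widetilde M_\infty$, use the deformation retraction onto $S\times\{0\}$ to compute $\pi_1$ and $\pi_2$, and lift essential tori and annuli to $F^n$-invariant surfaces to be put in normal form --- is the natural and expected one, and the compactness, irreducibility, and boundary-incompressibility arguments look sound (the fact that $\mathcal U_\pm \hookrightarrow S$ is $\pi_1$-injective because the tight nesting neighborhoods have essential boundary is the only point that merits an explicit sentence).

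However, there are two concrete gaps in the atoroidality/acylindricity step beyond the acknowledged normal-form hand-wave. First, strong irreducibility forbids periodic \emph{lines} as well as periodic curves and reducing curves, and your argument never invokes them; yet when an essential annulus $A$ has $\pi_1(A)$ mapping \emph{nontrivially} to $\mathbb Z = \pi_1(\overline M_f)/\pi_1(\widetilde M_\infty)$, its preimage components are $F^n$-invariant infinite strips whose boundary consists of proper lines in $\partial \widetilde M_\infty$, and the normal-form construction then produces an $f^n$-invariant properly embedded \emph{line} in $S$, not a closed curve. Second, and relatedly, your case dichotomy for $A$ (``both boundary components on the same $S_\pm$'' versus ``one on each'') is not the operative one. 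The relevant dichotomy is whether $\pi_1(A)\to\mathbb Z$ is trivial or not; each of your two topological configurations is compatible with either algebraic alternative, and the shape of the lift (compact annulus versus infinite strip) --- hence whether you extract a curve, a line, or a reducing curve --- is governed by the algebra, not by which boundary surface the circles lie on. Reorganizing around the $\pi_1(A)\to\mathbb Z$ dichotomy, with a separate subcase analysis of where the boundary circles sit, and explicitly invoking the ``no periodic lines'' clause of strong irreducibility, is needed to make the acylindricity argument complete.
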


Together with Thurston's Geometrization Theorem for Haken manifolds and Mostow Rigidity \cite{Thurston.Geometrization.1, mcmullen1992riemann, Morgan}, the above result implies the following theorem.

\begin{theorem} \label{thm:convex-hyperbolic}
If $f: S\to S$ is a strongly irreducible, end-periodic homeomorphism of a surface with finitely many ends, all accumulated by genus, then $\overline{M}_f$ admits a convex hyperbolic metric $\sigma_0$ with totally geodesic boundary, which is unique up to isometry.
\end{theorem}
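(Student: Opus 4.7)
The plan is to deduce the statement directly from the structural properties of $\overline M_f$ established in the preceding theorem (Proposition 3.1 of \cite{EndPeriodic1}) combined with Thurston's Geometrization Theorem for Haken manifolds and Mostow Rigidity. The author has essentially flagged this as a citation-style proof, and the role of the preceding theorem is precisely to package the topological hypotheses needed to invoke these classical results. So the task is to verify the hypotheses carefully and point to the correct versions of the theorems.

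First I would observe that $\overline M_f$ is a compact, irreducible 3-manifold with nonempty, incompressible boundary (nonemptiness follows because $S$ has at least one end, and each gives rise to a boundary component in $\overline M_f$). In particular, the presence of a nonempty incompressible boundary surface gives a place to start a Haken hierarchy, so $\overline M_f$ is Haken. This prepares the ground for applying Thurston's theorem.

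Next, I would invoke Thurston's Uniformization Theorem in its form for acylindrical Haken manifolds: a compact, irreducible, atoroidal, acylindrical 3-manifold with nonempty incompressible boundary admits a convex hyperbolic structure with totally geodesic boundary. This is precisely the relative version of Thurston's geometrization theorem, as established in \cite{Thurston.Geometrization.1} and given detailed expositions in \cite{mcmullen1992riemann, Morgan}. Since the previous theorem guarantees all four adjectives (irreducibility, atoroidality, acylindricity, and incompressible boundary), this step applies directly and produces the metric $\sigma_0$.

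Finally, uniqueness up to isometry follows from Mostow Rigidity applied to the convex cores: with the convex hyperbolic structure, the interior of $\overline M_f$ is geometrically finite, and its convex core (which is $\overline M_f$ itself once we include the totally geodesic boundary) has homotopy type determined by that of $\overline M_f$. Standard rigidity for geometrically finite hyperbolic 3-manifolds then shows any two such metrics are isometric. I do not expect any real obstacle here: the proof is a bookkeeping exercise of checking hypotheses and quoting the appropriate theorems—all of the substantive topological content has been carried out in the preceding theorem cited from \cite{EndPeriodic1}.
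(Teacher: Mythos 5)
Your proposal is correct and follows exactly the route the paper takes: the structural properties from \cite[Proposition~3.1]{EndPeriodic1} (compact, irreducible, atoroidal, acylindrical, incompressible boundary) are fed into Thurston's Geometrization Theorem for Haken manifolds to produce the convex hyperbolic metric with totally geodesic boundary, with uniqueness supplied by Mostow Rigidity. The paper states this in one sentence immediately before the theorem; your write-up simply spells out the hypothesis-checking in slightly more detail.
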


Whenever discussing metric properties of $\overline M_f$, we will assume it is equipped with the convex hyperbolic metric $\sigma_0$, and may simply refer to it as {\em the} hyperbolic metric on $\overline M_f$ (due to the uniqueness statement), without specific reference to its name. The metric  $\sigma_0$ pulls back to a complete hyperbolic metric on $\widetilde M_\infty$ for which $\mathcal U_\pm \times \{\pm \infty\}$ is totally geodesic.  

Given a core $Y$ with tight nesting neighborhoods $U_\pm$, we choose a hyperbolic metric $\mu$ on $S$ so that the ``inclusions'' $U_\pm \to U_\pm \times \{\pm \infty\}$ into $\partial \widetilde{M}_\infty$ are isometric embeddings.  Then, after adjusting $f$ by an isotopy on $U_\pm$ if necessary, $\mu$ is compatible with $Y$.  This is possible since the inclusion $U_\pm \to U_\pm \times \{\pm \infty\}$ conjugates the restriction of $f^{\pm 1}$ to the restriction of $F^{\pm 1}$, which acts isometrically.  We say that such a metric is {\em induced by the metric on $\widetilde M_\infty$}.

\subsection{Euler characteristic, complexity, and capacity} \label{S:complexities}

Given a compact surface $Z$ of genus $g$ with $n$ boundary components, there are two measures of the ``size" of $Z$; $\chi(Z)$, the Euler characteristic, and the complexity $\xi(Z)= 3g-3+n$. We note that $\xi(Z)$, when positive, is the maximal number of essential, pairwise disjoint, pairwise non-isotopic simple closed curves on $Z$, \textit{i.e.}~the number the of curves in a pants decomposition of $Z$.  Since $\chi(Z) = 2-2g-n$, we have the following elementary fact for all $Z$ with $\xi(Z) \geq 0$,
\[ |\chi(Z)|-1 \leq \xi(Z) \leq \tfrac{3}2 |\chi(Z)|.\]
For $Z$ closed (and genus at least $1$) the second inequality is an equality.  We extend both of these quantities to disconnected surfaces, additively over the components (which is natural for the Euler characteristic), and observe that when all components have $\xi \geq 0$ (the only case of interest for us), the inequality on the right still holds.  We use all of this in what follows without explicit mention.

Given an end-periodic homeomorphism $f \colon S\to S$, we define the \emph{core characteristic of $f$} to be
\[ \chi(f) = \max_{Y \subset S} \chi(Y), \]
where the maximum is taken over all cores $Y \subset S$ for $f$. Informally, a core is a subsurface where curves from the repelling end get ``hung up" under forward iteration of $f$ (or where curves from the attracting end get hung up under backward iteration). Thus, $\chi(f)$ measures the minimal size of the subsurface where that behavior occurs. Any core $Y$ with $\chi(Y) = \chi(f)$ will be called a {\em minimal core}.  
\begin{remark}
Note that it is always possible to choose a core $Y$ so that each component of $U_+$ or $U_-$ meets $Y$ in a single simple closed curve (see \cite[Corollary~2.5]{EndPeriodic1} and the discussion preceding it).  In this case, connectivity of $S$ implies connectivity of $Y$. However, cores need not be connected, as the example in \Cref{F:weird cores} illustrates.

\begin{figure}[htb!]
\centering
\def\svgwidth{4in}
\begingroup%
  \makeatletter%
  \providecommand\color[2][]{%
    \errmessage{(Inkscape) Color is used for the text in Inkscape, but the package 'color.sty' is not loaded}%
    \renewcommand\color[2][]{}%
  }%
  \providecommand\transparent[1]{%
    \errmessage{(Inkscape) Transparency is used (non-zero) for the text in Inkscape, but the package 'transparent.sty' is not loaded}%
    \renewcommand\transparent[1]{}%
  }%
  \providecommand\rotatebox[2]{#2}%
  \newcommand*\fsize{\dimexpr\f@size pt\relax}%
  \newcommand*\lineheight[1]{\fontsize{\fsize}{#1\fsize}\selectfont}%
  \ifx\svgwidth\undefined%
    \setlength{\unitlength}{888.37648345bp}%
    \ifx\svgscale\undefined%
      \relax%
    \else%
      \setlength{\unitlength}{\unitlength * \real{\svgscale}}%
    \fi%
  \else%
    \setlength{\unitlength}{\svgwidth}%
  \fi%
  \global\let\svgwidth\undefined%
  \global\let\svgscale\undefined%
  \makeatother%
  \begin{picture}(1,0.54937404)%
    \lineheight{1}%
    \setlength\tabcolsep{0pt}%
    \put(0,0){\includegraphics[width=\unitlength,page=1]{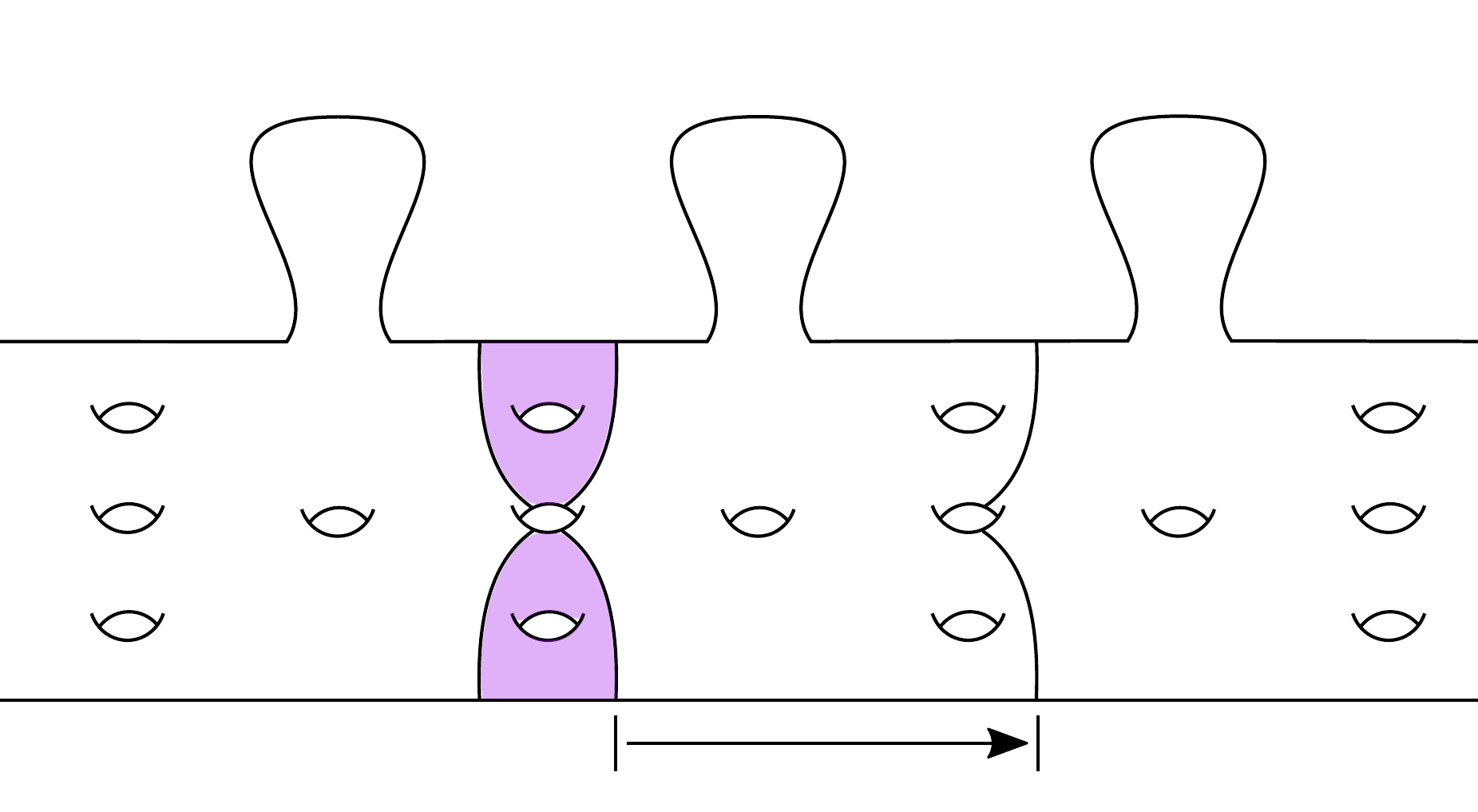}}%
    \put(0.55951853,0.00595251){\color[rgb]{0,0,0}\makebox(0,0)[t]{\lineheight{1.25}\smash{\begin{tabular}[t]{c}$\rho$\end{tabular}}}}%
    \put(0,0){\includegraphics[width=\unitlength,page=2]{disconnected-core.pdf}}%
    \put(0.51217755,0.52338746){\color[rgb]{0,0,0}\makebox(0,0)[t]{\lineheight{1.25}\smash{\begin{tabular}[t]{c}$h$\end{tabular}}}}%
    \put(0,0){\includegraphics[width=\unitlength,page=3]{disconnected-core.pdf}}%
  \end{picture}%
\endgroup%

\caption{The homeomorphism $\rho$ generates a covering action with quotient a genus $6$ surface, and $h$ is a partial pseudo Anosov supported on the subsurface which is the shaded (purple and gray) subsurface with genus $6$ and four boundary components. The disconnected shaded subsurface (purple in the figure) is a core for $f = h \circ \rho$, regardless of what $h$ is.  Note that one of the components of this core has a single boundary component that faces $U_+$, and thus no component that faces $U_-$.  We can remove this component, and what remains is still a core for $f$.  For $h$ ``sufficiently complicated'', $f = h \circ \rho$ will be strongly irreducible.}
\label{F:weird cores}
\end{figure}
\end{remark}

Given a core $Y$ for a strongly irreducible end-periodic homeomorphism $f \colon S \to S$, some components of $Y$ may be disjoint from either $\partial_-Y$ or $\partial_+Y$.  We call such a component {\em imbalanced}, and say that $Y$ is {\em balanced} if there are no imbalanced components.  The next lemma says that these imbalanced components can always be safely ignored.  While all the arguments in this paper hold regardless of whether or not there are imbalanced components, it can be helpful in developing intuition to assume there are none.

\begin{lemma} \label{L:balanced subcore}
If $f \colon S\to S$ is a strongly irreducible end-periodic homeomorphism and $Y$ is a core for $f$, then there is a subsurface $Y' \subset Y$ which is balanced.  In particular, any minimal core is balanced.
\end{lemma}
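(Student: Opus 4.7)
The plan is to simultaneously remove all imbalanced components of $Y$. Let $Y_+$ be the union of components of $Y$ that are disjoint from $\partial_- Y$, let $Y_-$ be the union of components that are disjoint from $\partial_+ Y$, set $Y' := Y - (Y_+ \cup Y_-)$, and enlarge $U_\pm' := U_\pm \cup Y_\pm$. By construction every component of $Y'$ meets both $\partial_+ Y$ and $\partial_- Y$, so $Y'$ will be balanced as soon as it is shown to be a core. The disjointness $U_+' \cap U_-' = \emptyset$ and the fact that $\partial \overline{U_\pm'} = \partial \overline{U_\pm} - \partial Y_\pm$ is still a disjoint union of simple closed curves are immediate from the construction.

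The heart of the argument will be to verify $f(U_+') \subset U_+'$; the condition $f^{-1}(U_-') \subset U_-'$ follows symmetrically. Since $f(U_+) \subset U_+$ is already assumed, it suffices to show $f(Y_+) \subset U_+'$. I would fix a component $Y_0 \subset Y_+$. From $Y_0 \cap U_- = \emptyset$ and $f^{-1}(U_-) \subset U_-$ one gets $f(Y_0) \cap U_- = \emptyset$; after arranging $f(\overline{U_+}) \subset U_+$ (which we are free to do via an isotopy of $f$ supported in $U_+$ making it isometric for a compatible metric), $\partial f(Y_0) = f(\partial Y_0) \subset U_+$. A small generic perturbation further ensures $f(Y_0) \cap \partial U_- = \emptyset$. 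Thus $f(Y_0)$ is a compact connected surface inside $U_+ \cup Y$ whose boundary lies inside $U_+$.

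The topological crux is the standard fact that any subsurface $Z \subset Y$ with $\partial Z \subset \partial Y$ is a union of components of $Y$: for each component $Y_k$ of $Y$, the set $Z \cap Y_k^\circ$ is simultaneously open and relatively closed in the connected interior $Y_k^\circ$, so equals $\emptyset$ or $Y_k^\circ$. Applying this to $Z := f(Y_0) \cap Y$, whose boundary in $S$ lies in $\partial Y$ by the preceding paragraph, yields that $f(Y_0) \cap Y$ is a union of components of $Y$. Each such component sits inside $f(Y_0)$ and hence is disjoint from $\overline{U_-}$, so its boundary lies entirely in $\partial_+ Y$; that component is itself $\partial_+$-imbalanced and therefore sits in $Y_+$. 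Consequently $f(Y_0) \cap Y \subset Y_+$, and $f(Y_0) \subset U_+'$, completing the forward invariance.

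Once $Y'$ is confirmed to be a balanced core, the ``in particular'' statement falls out: if $Y$ were minimal but had an imbalanced component of negative Euler characteristic, then $Y' \subsetneq Y$ would give $\chi(Y') > \chi(Y)$, contradicting the maximality in the definition of $\chi(f)$; hence $Y' = Y$ and $Y$ itself is balanced. The main obstacle I anticipate is not the component-decomposition fact itself but the small transversality adjustment needed to separate $f(Y_0)$ cleanly from $\overline{U_-}$ and to push $f(\partial Y_0)$ strictly inside $U_+$. Both adjustments are permitted within the allowed isotopy class of $f$ (or by slight modifications of the tight nesting neighborhoods $U_\pm$), and once they are in place the rest of the argument is a formal topological bookkeeping exercise.
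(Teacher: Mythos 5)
Your approach differs from the paper's: you remove all imbalanced components in a single step and try to show $f(U_+') \subset U_+'$ directly, whereas the paper removes one ``batch'' at a time via a strictly decreasing nested sequence. The paper's iteration is not mere bookkeeping; it is where strong irreducibility does its work, and your proof never uses that hypothesis, which is a warning sign.

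The gap is that forward invariance $f(U_+') \subset U_+'$ is not enough for $U_+'$ to be a tight nesting neighborhood. The definition also requires $\{f^n(U_+')\}_{n > 0}$ to form a neighborhood basis of the attracting ends, i.e.\ $f^n(Y_+)$ must eventually escape every compact set. Your argument only gives $f(Y_+) \subset U_+ \cup Y_+$, which leaves open the possibility that components of $Y_+$ cycle under $f$: a priori one could have components $C_1, C_2 \subset Y_+$ with $C_2 \subset f(C_1)$ and $C_1 \subset f(C_2)$, so that $f^n(U_+') \cap Y \supset Y_+$ for all $n$ and $U_+'$ is \emph{not} a nesting neighborhood. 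Ruling out such cycles is exactly what requires strong irreducibility: if $C_{i_0} \subset f^k(C_{i_0})$ then the Euler characteristics match, so the difference is a union of annuli, so $f^k$ carries $\partial C_{i_0}$ to isotopic curves, producing a periodic curve. The paper builds this into the structure of the argument: it produces $Y_0 \supsetneq Y_1 \supsetneq \cdots$ (strict by strong irreducibility), terminates at $Y_{n-1}$ with $f(Y_{n-1}) \subset U_+$, and enlarges $U_+$ to $U_+ \cup Y_{n-1}$. The crucial point is that $f(U_+ \cup Y_{n-1}) \subset U_+$ (not merely $\subset U_+ \cup Y_{n-1}$), so $f^n(U_+ \cup Y_{n-1}) \subset f^{n-1}(U_+)$ and the neighborhood basis condition is inherited automatically from $U_+$. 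Your all-at-once construction cannot hope to get $f(Y_+) \subset U_+$ in general, so to salvage it you would need to separately verify the escape property by an argument along the lines above---at which point you are essentially reproducing the paper's nested sequence.

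Two smaller remarks. First, the transversality perturbation you flag as the ``main obstacle'' is actually unnecessary: since $f^{-1}(\overline{U_-}) \subset \overline{U_-}$ and $Y_0 \cap \overline{U_-} = \emptyset$, one gets $f(Y_0) \cap \overline{U_-} = \emptyset$ without any adjustment. Second, in the ``in particular'' step you hedge with ``imbalanced component of negative Euler characteristic''; if an imbalanced component were an annulus, removing it would not change $\chi$. This does need a word (for instance, an imbalanced annulus would force two components of $\partial_+ Y$ to be isotopic, and one shows such a configuration is incompatible with $Y$ being a core for a strongly irreducible $f$), but it is a minor point compared to the missing neighborhood-basis verification.
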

\begin{proof}
Let $Y$ be a core for $f$ and suppose there is an imbalanced component disjoint from $\partial_-Y$.  We first show that we can remove at least one such component to obtain a new core for $f$.  To that end, let $Y_0 \subset Y$ be the union of all imbalanced components with $\partial_-Y_0 = \emptyset$. Since $f(U_+) \subset U_+$, and the components of $\{f^k(U_+)\}_{k>0}$ determine a neighborhood basis for the attracting ends, it follows that $f^{-n}(\partial_+ Y)$ has no transverse intersections with $\partial_+Y$, for all $n \geq 1$.  
Now set $Y_1$ to be the (possibly empty) intersection $Y_1 = Y \cap f(Y_0)$.  Observe that $f^{-1}(Y_1) = f^{-1}(Y) \cap Y_0$, is a (possibly empty) subsurface of $Y_0$ whose boundary components are either components of $\partial_+Y_0$ or else are contained in the interior of $Y_0$ and hence $\partial f^{-1}(Y_1) \subset f^{-1}(\partial_+ Y)$.  Consequently, $Y_1$ is a union of components of $Y_0$.  

We note that $Y_1$ is a proper subsurface of $Y_0$ since otherwise, $f^{-1}$ would simply permute the boundary components of $Y_0$, creating a periodic curve, contradicting the strong irreducibility of $f$. Continuing in a similar way, we see that $Y_2 = Y \cap f(Y_1)$ is a union of components of $Y_1$. In fact, this subsurface $Y_2$ must be a proper subsurface of $Y_1$ by a similar argument as above. Continuing inductively, we find a nested sequence
\[ Y_0 \supset Y_1 \supset Y_2 \supset \cdots \]
defined by $Y_{j+1} = Y \cap f(Y_j)$, for all $j \geq 0$. Furthermore, $Y_j \neq Y_{j+1}$ if $Y_j \neq \emptyset$.

As the areas of the subsurfaces in this sequence always decrease by a multiple of $\pi$, there is some smallest $n \geq 1$ so that $Y_n = \emptyset$ (and then $Y_j = \emptyset$ for all $j \geq n$).  It follows that $Y_{n-1} \neq \emptyset$ is a union of components of $Y_0$ and $f(Y_{n-1}) \cap Y = \emptyset$.  Since $f(\partial_+Y) \subset \overline U_+$, it follows that $f(Y_{n-1}) \subset U_+$.  Therefore, $W = Y-Y_{n-1}$ must also be a core. Indeed,
\[ S - W = U_- \sqcup (U_+ \cup Y_{n-1}), \]
and since $f(Y_{n-1}) \subset U_+$, we have
\[ f^{-1}(U_-) \subset U_- \mbox{ and } f(U_+ \cup Y_{n-1}) \subset U_+ \subset U_+ \cup Y_{n-1},\]
meaning that $U_-$ and $U_+\cup Y_{n-1}$ are tight nesting neighborhoods.

We thus have a new core and have reduced the number of imbalanced components.  Repeating this procedure finitely many times we can remove all imbalanced components with non-empty positive boundary.  Likewise, repeating for the union $Y_0'$ of imbalanced components with $\partial_+Y_0' = \emptyset$, and replacing $f$ with $f^{-1}$ in the arguments above, we can remove all imbalanced components.
\end{proof}

\Cref{L:balanced subcore} together with \Cref{L:intution matched} ensures that we can always select a minimal core which produces nesting neighborhoods of the ends each with a single boundary component. This is helpful to record as it justifies the intuition that we have used throughout the paper. 

\begin{lemma} \label{L:intution matched}
If $Y \subset S$ is a balanced core for a strongly irreducible, end-periodic homeomorphism and $\mathcal U_0 \subset \mathcal U_\pm$ is a component defined by a component $U_0 \subset S-Y$, then $\partial \overline U_0$ separates $\mathcal U_0$ into two components, each a neighborhood of an end of $\mathcal U_0$.
\end{lemma}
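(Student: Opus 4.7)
The plan is threefold: first, use cocompactness to see $\mathcal U_0$ is an infinite cyclic cover of a closed surface with exactly two ends; second, show $U_0$ is a neighborhood of just one of these ends; third, use the balanced hypothesis to prove $\mathcal U_0 \setminus \overline U_0$ is a connected neighborhood of the other end.

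For the first step, I would invoke \cite[Lemma~2.4]{EndPeriodic1}: $\langle f \rangle$ acts cocompactly on $\mathcal U_+$, and the stabilizer $\langle f^q \rangle$ of the component $\mathcal U_0$ gives a covering $\pi \colon \mathcal U_0 \to S_{+,0}$ onto a component of $S_+$. By \Cref{thm:convex-hyperbolic}, $\partial \overline M_f$ is a closed surface, so $S_{+,0}$ is closed, $\mathcal U_0$ is an infinite cyclic cover of a closed connected surface, and therefore has exactly two ends --- call them $E^+$ and $E^-$, where $E^+$ is the attracting end under $f^q$.

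For the second step, using a lift $h \colon \mathcal U_0 \to \mathbb R$ of the height map $S_{+,0} \to S^1$ with $h \circ f^q = h + 1$, and writing $p = rq$ for the $f_*$-period of $U_0$, the nesting $f^p(U_0) \subsetneq U_0$ with empty intersection of translates forces $h(U_0) + r \subset h(U_0)$, making the interval $h(U_0)$ unbounded above. If $h(U_0) = \mathbb R$, then $\overline U_0$ would surject onto $S_{+,0}$ and one can conclude $\overline U_0 = \mathcal U_0$, contradicting $\partial \overline U_0 \neq \emptyset$. So $h(U_0) = (b, +\infty)$ for some $b$, with $\partial \overline U_0$ concentrated in the level set $h^{-1}(b)$ separating $U_0$ from $\mathcal U_0 \setminus \overline U_0$, and $U_0$ is a neighborhood only of $E^+$.

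The hard part will be showing $\mathcal U_0 \setminus \overline U_0$ is connected. Since $\mathcal U_0$ has only two ends and $U_0$ exhausts $E^+$, the complement contains $E^-$; what needs ruling out is any compact ``bubble'' component $C$ of $\mathcal U_0 \setminus \overline U_0$. I would first show such a $C$ must lie entirely in $Y$: if $C$ met another component $\Omega_j$ of $U_+ \cap \mathcal U_0$ with $j \neq 0$, or a component of $U_-$, then since $\partial C \subset \partial \overline U_0 \subset \partial_+ Y$ is disjoint from $\partial \overline{\Omega_j}$ and from $\partial_- Y$, connectedness would force $C$ to contain all of $\Omega_j$ or that $U_-$-component --- contradicting compactness, as these sets have non-compact ends of $S$. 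With $C \subset Y$ and $\partial C \subset \partial_+ Y$, the balanced hypothesis then applies: the component $Y_0 \subset Y$ meeting $C$ has a boundary curve in $\partial_- Y$ disjoint from $C$ and lying in $\mathcal U_0 \setminus \overline U_0$. The backward translates $\{f^{-nq}(Y_0)\}_{n \geq 0}$ shift these $\partial_-$-curves downward in $h$, and balanced-ness ensures consecutive translates meet along shared $\partial_\pm$-boundary structure, giving a connected chain in $\mathcal U_0 \setminus \overline U_0$ linking $C$ to a neighborhood of $E^-$. This contradicts $C$ being a separate component, completing the argument.
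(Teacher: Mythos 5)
Your overall strategy mirrors the paper's: identify a hypothetical third (bounded) component of $\mathcal U_0 - \partial \overline U_0$, show it is trapped in $Y$, and then invoke the balanced hypothesis to get a contradiction. Your steps (1) and (2) -- the covering/height-function reasoning that $\mathcal U_0$ has two ends and that $U_0$ is a neighborhood of just one of them -- are essentially correct, though more elaborate than necessary; the paper takes these for granted (``By construction, $U_0$ is\ldots a neighborhood of an end of $\mathcal U_0$. Since $\mathcal U_0$ has two ends, there is at least one other component\ldots''). Your argument that a bounded component $C$ must avoid the other pieces of $U_\pm$ and hence lie in $Y$, with $\partial C \subset \partial_+ Y$, is also the right move and matches the paper's claim.

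The problem is in your final paragraph. Once you have that $C$ is an open subset of $\intt Y$ with $\overline C$ compact and $\partial C \subset \partial_+Y \subset \partial Y$, it follows immediately that $\overline C$ is a closed subsurface of $Y$ whose boundary lies in $\partial Y$; hence $\overline C$ is a \emph{union of components of $Y$}, and every such component $Y_0$ satisfies $\partial Y_0 \subset \partial C \subset \partial_+ Y$. That is exactly the definition of an imbalanced component, so the balanced hypothesis gives the contradiction outright -- you are done. Instead, you write that ``the component $Y_0 \subset Y$ meeting $C$ has a boundary curve in $\partial_- Y$ disjoint from $C$ and lying in $\mathcal U_0 \setminus \overline U_0$.'' This conflates the assertion (a component has only $\partial_+$ boundary) with its negation (balancedness supplies a $\partial_-$ curve); you should recognize that the existence of such a $\partial_-$ curve on $Y_0$ is what is being contradicted, not used. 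Moreover the parenthetical claim that this $\partial_-Y$ curve lies in $\mathcal U_0$ is false: $\partial_- Y \subset \mathcal U_-$, while $\mathcal U_0 \subset \mathcal U_+$, so such a curve cannot lie in $\mathcal U_0$ at all. The subsequent chain-of-translates argument is built on this faulty premise and is not needed. In short: you reached the key configuration ($C$ a compact piece of $Y$ with $\partial C \subset \partial_+ Y$) but missed that the contradiction with balancedness is immediate, and the extra argument you appended both obscures this and contains an incorrect claim about the location of $\partial_- Y$.
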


\begin{proof} Suppose $\mathcal U_0 \subset \mathcal U_+$, for concreteness (the other case follows by replacing $f$ with $f^{-1}$).
By construction, $U_0 \subset \mathcal U_0 - \partial U_0$ is an unbounded component which is a neighborhood of an end of $\mathcal U_0$.  Since $\mathcal U_0$ has two ends, there is at least one other component $U_0' \subset \mathcal U_0 - \partial U_0$, which is necessarily unbounded.  We must show that $\mathcal U_0 - \partial U_0 = U_0 \cup U_0'$.

Suppose there is another component, $U_1 \subset \mathcal U_0 - \partial U_0$, different from $U_0,U_0'$.  Observe that $U_1$ is the interior of a compact subsurface $\overline U_1$ with
\[ \partial \overline U_1 \subset \partial U_0 \subset \partial_+Y.\]  
We claim that $\partial Y - \partial U_0$ cannot intersect $\overline U_1$.  Indeed, it is disjoint from $\partial U_0$, so if it intersected $\overline U_1$, it would necessarily be contained in it. But every component $\alpha \subset \partial Y - \partial U_0$ faces a neighborhood of another end of $S$ (different than $U_0$), and so $\alpha$ is contained in a component $\mathcal U_\pm$ {\em different} than $\mathcal U_0$.  The claim implies $\overline U_1$ is a component of $Y$ with $\partial \overline U_1 \subset \partial  U_0 \subset \partial_+Y$; thus, $\overline U_1$ is an imbalanced component, which is a contradiction.  The lemma follows.
\end{proof}

For any end-periodic homeomorphism $f \colon S \to S$, we also define the \emph{end complexity of $f$} to be
\[ 
    \xi(f) = \xi(\partial \overline M_f).
\]
Note that in \cite{EndPeriodic1}, the right-hand side is the notation for this complexity, but because it will appear often later, we have adopted this short-hand.

Given any core $Y$ for $f$ with tight nesting neighborhoods $U_\pm$, set
\[ \Delta_\pm = \overline{U_\pm - f^{\pm 1}U_\pm},\] which are compact subsurfaces in $\overline U_\pm$.   We note that $\Delta_\pm \subset \mathcal U_\pm$ serves as a fundamental domain for the restricted action of $\langle f\rangle|_{{\mathcal U}_\pm}$, therefore $\chi(\Delta_\pm) = \chi(S_\pm)$ (see the proof of \cite[Corollary~2.5]{EndPeriodic1}).  Therefore,
\[ |\chi(\Delta_\pm)| = |\chi(S_\pm)|= \tfrac23\xi(S_\pm),\]
since $S_\pm$ are closed.
From the same corollary, $\xi(S_+) = \xi(S_-)$, and thus
\[ 
    \xi(f) = \xi(\partial \overline M_f) = \xi(S_+) +\xi(S_-) = 2\xi(S_+)= 3|\chi(\Delta_\pm)|.  
\]
Thus, $\xi(f)$ can be thought of as measuring the amount of translation of $f$ on the ends of $S$; an alternative perspective on this is explained in \cite[Corollary~2.8]{EndPeriodic1} which connects to work of Aramayona-Patel-Vlamis \cite{aramayona2020first}.

Taken together, $\chi(f)$ and $\xi(f)$ provide a measure of the topological complexity of $f$. More precisely, $f$ acts by ``translating" from the negative ends of $S$ into the positive ends with some amount of ``mixing" happening in a compact subsurface: $\xi(f)$ measures how much $f$ translates by and $\chi(f)$ measures how large of a subsurface the mixing takes place on. These two quantities thus serve as a substitute for the genus, Euler characteristic, or complexity of a finite-type surface.

\begin{definition}
For any end-periodic homeomorphism $f$, we call the pair $(\chi(f),\xi(f))$, the {\em capacity} of $f$.
\end{definition}

We also note that since $\xi(f) = \frac32|\chi(\partial \overline M_f)|$ and since for any $n > 0$, $\overline M_{f^n}$ is an $n$--fold cover of $\overline M_f$, we have \[ \xi(f^n) = \tfrac32|\chi(\partial \overline M_{f^n})| = n\tfrac{3}2|\chi(\partial \overline M_f)| = n\xi(f).\]
Thus raising to powers increases end-complexity in a predictable way.  On the other hand, a core for $f$ is also a core for $f^n$, and hence core characteristic is non-decreasing under raising to powers.

\subsection{The pants graph and Bers pants decompositions}

A {\em pants decomposition} on $S$ is a multicurve $P$ in $S$ such that $S - P$ is a collection of three-holed spheres (\textit{i.e.}~pairs of pants). An {\em elementary move} on a pants decomposition $P$ replaces a single curve in $P$ with a different one intersecting it a minimal number of times, producing a new pants decomposition $P'$. There are two types of elementary moves corresponding to whether the complexity one subsurface in which the elementary move takes place is a one-holed torus or a four-holed sphere. This is illustrated in \Cref{fig:pants-moves}. 

\begin{figure}[htb]
    \centering
    \includegraphics[width = 7 cm]{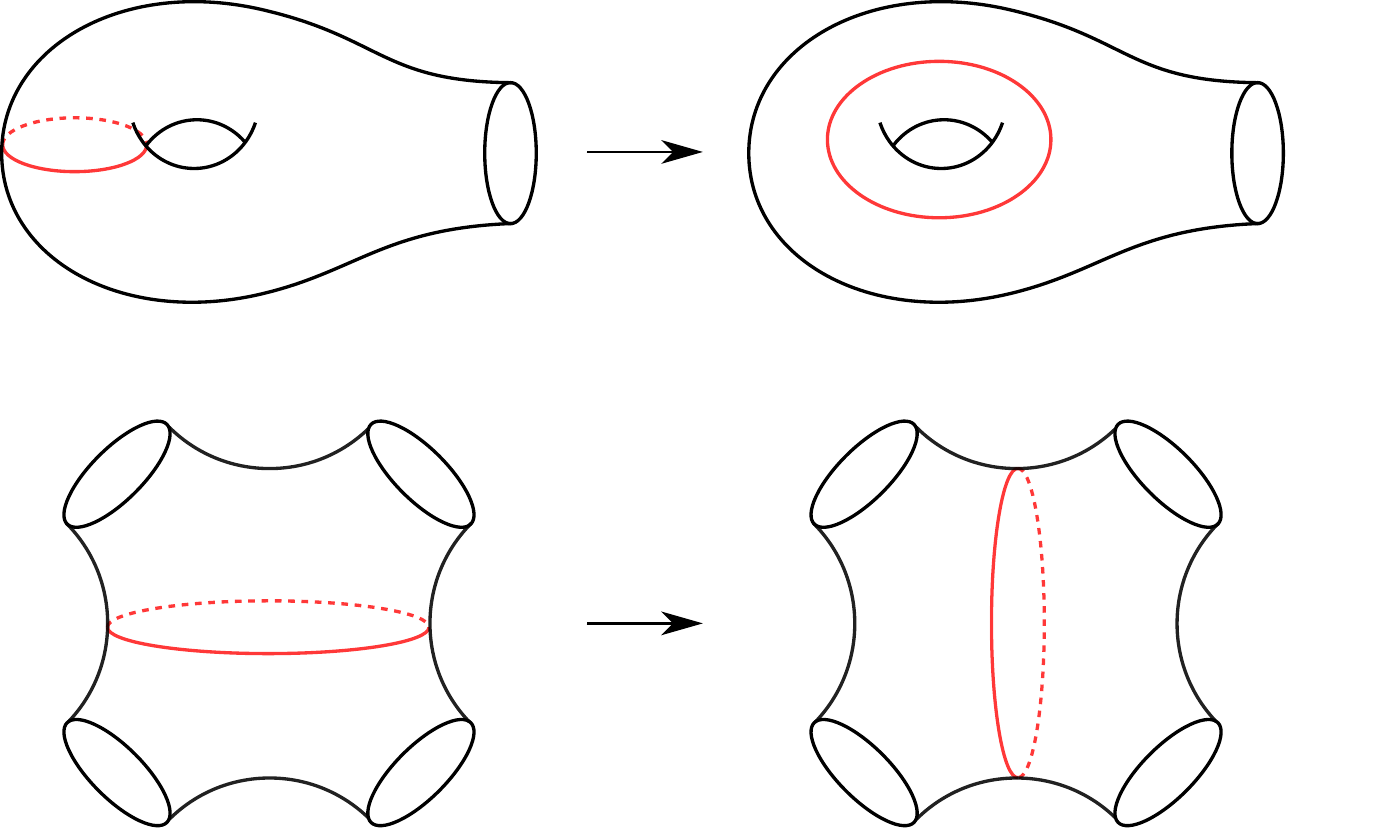}
    \caption{Elementary moves on pants decompositions.}
    \label{fig:pants-moves}
\end{figure}

\begin{definition} \label{def:pants-graph}
The {\em pants graph}, $\mathcal P(S)$, is the graph whose vertices are (isotopy classes of) pants decompositions on $S$, with edges between pants decompositions that differ by an elementary move. 
\end{definition} 

There is a path metric on (the components of) $\mathcal P(S)$ with respect to which the action of $\Map(S)$ on $\mathcal P(S)$ is isometric. It is defined as follows: an edge corresponding to an elementary move that occurs on a one-holed torus has length $1$, and an edge corresponding to an elementary move that occurs on a four-holed sphere has length $2$. 

Brock proved in \cite{Brock-mappingtorus-vol} that for finite-type surfaces $Z$, $\mathcal P(Z)$ is quasi-isometric to the Teichm\"uller space of $Z$, $\Teich(Z)$ equipped with the Weil--Petersson metric. Although this correspondence no longer holds in the infinite-type setting, as we shall see, the pants graph still encodes important geometric data. 

\begin{definition}
Given any end-periodic homeomorphism $f \colon S \to S$, we define the \textit{asymptotic translation distance} of $f$ on $\mathcal{P}(S)$ to be \[\tau(f) = \inf_{P\in\mathcal{P}(S)} \liminf_{n \to \infty} \frac{d(P, f^n(P))}n,\]
where this infimum is over all pants decompositions $P \in \mathcal P(S)$. Observe that $\tau(f^n) = n \tau(f)$ for all $n > 0$.
\end{definition} 

Note that $\mathcal P(S)$ is necessarily disconnected when $S$ is of infinite type (see Branman \cite{Branman} for more on the pants graphs of infinite-type surfaces).  In particular, for certain $P \in \mathcal P(S)$, $d(P,f^n(P))$ is infinite for all $n > 0$.  Consequently, the infimum is effectively being taken over the union of connected components for which this distance is finite for some, hence infinitely many, $n>0$. In \cite{EndPeriodic1}, such $P$ were called $f$--asymptotic pants decompositions as these pants decompositions are $f$--invariant on neighborhoods of the ends of $S$. 

Throughout the paper it will be necessary to produce pants decompositions of bounded length. Bers proved  that a closed hyperbolic surface of genus $g$ admits a pants decomposition for which all the curves have length bounded by a constant depending only on the genus \cite{BersP1,BersP2}.  We will need a relative version of Bers result for surfaces with boundary, a short proof of a very concrete version of which was recently given by Parlier \cite{Parlier2023}.

\begin{theorem}{\cite[Theorem~1.1]{Parlier2023}}\label{thm:relative-bers}
    Let $X$ be a hyperbolic surface, possibly with geodesic boundary, and of finite area. Then $X$ admits a pants decomposition where each curve is of length at most \[L_B = \max{\{\ell(\partial X), \area(X)\}}.\]
\end{theorem}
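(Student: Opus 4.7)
The plan is to prove this by induction on the complexity $\xi(X) = 3g - 3 + n$, combined with a geometric argument that produces a short non-peripheral curve whenever one is needed. The base case $\xi(X) = 0$ is immediate: $X$ is a pair of pants and the pants decomposition consists of $\partial X$ alone, so every curve has length at most $\ell(\partial X) \leq L_B$.

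For the inductive step, the key claim is: if $\xi(X) \geq 1$, then $X$ contains a non-peripheral essential simple closed geodesic $\gamma$ with $\ell(\gamma) \leq L_B$. Rather than inducting in the naive way (which is delicate because cutting along $\gamma$ can increase $\ell(\partial X)$ and hence change $L_B$), I would build the pants decomposition greedily. Start with the empty multicurve $M_0 = \emptyset$, and at step $i$ choose the shortest simple closed geodesic $\gamma_i$ that can be adjoined to $M_{i-1}$ to form a larger multicurve. Set $M_i = M_{i-1} \cup \{\gamma_i\}$. The process terminates in at most $\xi(X)$ steps with a pants decomposition, so it suffices to show that each $\gamma_i$ produced has length at most $L_B$.

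To prove this, suppose at some stage the shortest admissible geodesic $\gamma$ has $\ell(\gamma) > L_B$, and work inside the complementary component $X_0$ of $M_{i-1}$ containing $\gamma$. I would use a standard embedded ball area argument: pick $p \in \gamma$ and consider the open metric ball $B(p, r) \subset X_0$ with $r = \ell(\gamma)/2$. If this ball were embedded, its hyperbolic area $2\pi(\cosh(r) - 1)$ would be at most $\area(X_0) \leq \area(X) \leq L_B < \ell(\gamma) = 2r$, forcing $\pi(\cosh(r)-1) < r$, which fails once $r$ is past a small absolute constant. Hence $B(p,r)$ is not embedded, producing a geodesic loop $\eta$ at $p$ of length strictly less than $\ell(\gamma)$. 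Standard surface topology lets me replace $\eta$ by a simple closed geodesic $\gamma'$ of length at most $\ell(\eta)$. Verifying that $\gamma'$ is admissible at the current stage (essential, non-peripheral in $X_0$, and disjoint from and not isotopic to any curve in $M_{i-1}$) contradicts the minimality of $\gamma$.

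The main obstacle is controlling the loop-to-simple-closed-curve replacement $\eta \mapsto \gamma'$: the loop $\eta$ might self-intersect, be peripheral in $X_0$, or become isotopic into a curve already in $M_{i-1}$, none of which would yield a contradiction. Handling this rigorously is where I expect the real work to lie. I would address it by applying a slight strengthening of the embedded ball argument: instead of comparing $\ell(\gamma)$ to $\area(X)$ directly, compare the area of a carefully chosen half-collar of $\gamma$ (given by the collar lemma) to $\area(X_0)$, and use that any shorter loop either bounds a disk or a cusp (ruled out by our hyperbolic boundary hypothesis), is boundary-parallel in $X_0$ (in which case it contributes to $\ell(\partial X_0) \leq \ell(\partial X) + \sum_{j < i} 2\ell(\gamma_j)$, but by construction every $\gamma_j$ already has length $\leq L_B$, allowing a sharper area/perimeter comparison to still force a short non-peripheral simple curve), or provides the desired admissible $\gamma'$.
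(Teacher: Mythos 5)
The paper does not prove this result---it is quoted directly from Parlier \cite{Parlier2023}---so the only comparison available is with Parlier's own short argument, which your proposal does not reproduce, and your greedy scheme has a genuine gap that you partially flag but do not close.

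For the induction to work you need the complementary piece $X_0$ of $M_{i-1}$ to contain a non-peripheral simple closed geodesic of length at most $L_B = \max\{\ell(\partial X),\area(X)\}$, where $L_B$ is the constant for the \emph{original} surface, not for $X_0$. But $\ell(\partial X_0)$ is not controlled by $\ell(\partial X)$: each previously chosen $\gamma_j$ shows up on $\partial X_0$, possibly twice, so $\ell(\partial X_0)$ can be as large as $\ell(\partial X)+2(i-1)L_B$. Precisely in the case you isolate---where the loop $\eta$ produced by the ball argument is peripheral in $X_0$---the inequality $\ell(\eta)<\ell(\gamma)$ is perfectly consistent with minimality of $\gamma$, because $\eta$ is parallel to a previously added curve whose length says nothing about the current systole of $X_0$. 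Your patch (``a sharper area/perimeter comparison still forces a short non-peripheral simple curve'') is a placeholder rather than an argument; making it rigorous and showing the bound stays at $L_B$ rather than growing with $i$ is exactly where the content of the theorem lies. There is also a smaller issue upstream: the computation $\area(B(p,r)) = 2\pi(\cosh r - 1)$ presupposes that no geodesic from $p$ of length $<r$ exits through $\partial X_0$; if the ball is truncated by the geodesic boundary, the dichotomy ``embedded ball or short loop'' breaks, and one must either double across the boundary (and then handle loops that cross it) or replace the pointwise ball with a collar of $\gamma$, which you gesture at but do not carry out. Parlier's proof sidesteps both problems by growing an equidistant neighborhood of the \emph{entire} boundary multicurve rather than a metric ball about a single point: the area constraint uniformly bounds the width of that neighborhood, and the first failure of embeddedness produces a short geodesic arc that closes up with boundary segments into a short simple curve disjoint from everything chosen so far. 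That global construction is what keeps the perimeter contribution under control throughout and delivers the clean constant $\max\{\ell(\partial X),\area(X)\}$; a pointwise systole-minimization argument of the kind you propose would need substantial further work to get there.
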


\section{Cores and topology}
\label{section.complexity}

Let $f: S \to S$ be a strongly irreducible end-periodic homeomorphism.  In this section, we will prove some additional topological information about $f$, a core $Y \subset S$ for $f$, and features of $Y$ that are reflected in $\overline M_f$.

\subsection{Uniform power bounds}

After applying a sufficiently large power of $f$, some part of any curve $\alpha\in Y$ must leave $Y$ ({\em c.f.}~\cite[Theorem~2.7(iii)]{Fenley-depth-one} and \cite[Lemma~2.1]{LandryMinskyTaylor2023}).  We will need the following strengthening of this fact which provides a uniform power for which that behavior occurs.

\begin{lemma} \label{L:no-closed-curves}
Given a core $Y\subset S$ for a strongly irreducible end-periodic $f \colon S \to S$, then for all $k \geq 2\xi(Y)$ there are no closed, essential curves contained in $f^k(Y) \cap Y$.
\end{lemma}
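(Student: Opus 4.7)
The plan is to argue by contradiction. Suppose $\alpha \subset f^k(Y) \cap Y$ is an essential closed curve with $k \geq 2\xi(Y)$, and set $\alpha_i = f^{-i}(\alpha)$ for $0 \leq i \leq k$; thus $\alpha_0 = \alpha \subset Y$ and $\alpha_k = f^{-k}(\alpha) \subset Y$. The first step is to show that every intermediate $\alpha_i$ is (up to isotopy) contained in $Y$.

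For this, I would exploit the tight nesting inclusions. Since $f(U_+) \subsetneq U_+$, any simple closed curve meeting $U_+^\circ$ has each forward $f$-iterate also meeting $U_+^\circ$; hence if some $\alpha_i$ met $U_+^\circ$, then $\alpha_0 = f^i(\alpha_i)$ would too, contradicting $\alpha_0 \subset Y$. Symmetrically, $f^{-1}(U_-) \subsetneq U_-$ forces any incursion of some $\alpha_i$ into $U_-^\circ$ to propagate all the way to $\alpha_k$, contradicting $\alpha_k \subset Y$. Since each $\alpha_i$ is thus disjoint from $U_+^\circ \cup U_-^\circ$, after a small isotopy each $\alpha_i$ lies in $Y^\circ$. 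Strong irreducibility (no periodic curves) then forces the $\alpha_i$ to be pairwise non-isotopic in $S$ and hence in $Y$: an isotopy $\alpha_i \simeq \alpha_j$ in $S$ for $i<j$ would give $f^{j-i}(\alpha) \simeq \alpha$. We therefore obtain $k+1 \geq 2\xi(Y)+1$ distinct isotopy classes of essential simple closed curves in $Y$, all lying in the nested intersection $B_k = Y \cap f(Y) \cap \dots \cap f^k(Y)$ (since $\alpha_i \subset Y$ gives $\alpha = f^i(\alpha_i) \in f^i(Y)$).

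The main obstacle is ruling out the existence of such a curve in $B_k$. The $\alpha_i$ need not be pairwise disjoint, so we cannot apply the pants-decomposition bound $\xi(Y)$ directly. My intended approach is to analyze the topology of $B_j$ inductively using the recursion $B_j = Y \cap f(B_{j-1})$: each step pushes the $\partial_+$-side boundary of $f(B_{j-1})$ into $U_+$ and then cuts off by intersecting with $Y$, so that $\xi(B_j)$ is strictly smaller than $\xi(B_{j-1})$ whenever $B_{j-1}$ still carries an essential closed curve. The factor of $2$ in $2\xi(Y)$ should then emerge from a two-sided accounting at $\partial_+Y$ and $\partial_-Y$ --- roughly, each pants curve of $Y$ can be used in at most two such reduction steps before being eliminated --- so that $\xi(B_k) \leq 0$ once $k \geq 2\xi(Y)$, forcing every component of $B_k$ to be a disk or annulus and excluding any essential closed curve. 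Making the per-step complexity drop precise, and bookkeeping it against $\xi(Y)$, is the key technical content of the proof.
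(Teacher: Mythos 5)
Your setup is sound and matches the paper's opening move: the curves $f^{i}(\alpha)$ (or, in your indexing, $\alpha_i=f^{-i}(\alpha)$) all lie in $Y$, and strong irreducibility rules out any two of them being isotopic. The gap is in the heart of the argument. You propose to track $\xi(B_j)$ for the nested intersections $B_j=Y\cap f(Y)\cap\cdots\cap f^j(Y)$ and to show it \emph{strictly decreases} at each step, which you acknowledge you have not established. In fact this is not the right mechanism, and it is not clear it is even true: passing from $B_{j-1}$ to $B_j=Y\cap f(B_{j-1})$ can peel off boundary-parallel annuli or otherwise leave the complexity unchanged, and the intersections $B_j$ need not even be essential subsurfaces in a form where $\xi$ behaves well.

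The paper's argument inverts your monotonicity: rather than a \emph{decreasing} intersection, it works with the \emph{increasing} filled subsurfaces $Z_j\subset Y$ (the smallest ``lean'' subsurface filled by $\alpha,f(\alpha),\dots,f^j(\alpha)$), and tracks a two-component complexity $\zeta(Z_j)=(\xi_j,y_j)$ (non-annular complexity, number of annular components) in dictionary order. This sequence is non-decreasing and bounded by $(\xi(Y),0)$, so because $k\geq 2\xi(Y)$ the pigeonhole forces $\zeta(Z_j)=\zeta(Z_{j+1})$ for some $j$. The crucial point your proposal is missing is what to do when the complexity \emph{stabilizes}: stabilization does not lead to a contradiction by itself, it instead forces $Z_j\simeq Z_{j+1}$ and $f(Z_j)\simeq Z_{j+1}$ via homeomorphisms isotopic to the identity, whence $f$ preserves $Z_j$ (hence $\partial Z_j$) up to isotopy --- and \emph{that} contradicts strong irreducibility (no periodic multicurves). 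So the factor of $2$ in $2\xi(Y)$ comes from the two-coordinate complexity $\zeta$ and the dictionary-order bookkeeping, not from a heuristic ``each pants curve is used twice.'' To salvage your approach you would need to replace the hoped-for strict decrease of $\xi(B_j)$ with an analogous ``stabilization implies an $f$-invariant subsurface'' step, at which point you would essentially be reproducing the paper's argument on the filled side.
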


\begin{proof}
We prove the equivalent statement that there are no essential curves in $f^{-k}(Y) \cap Y$, since this introduces fewer total inverses in the proof.

We first make a definition and record an observation. 
We say that an essential, possibly disconnected, subsurface $Z \subset S$ is {\em lean} if no components are pants and no two annular components are homotopic.
Given a lean subsurface $Z \subset S$, define 
\[ 
    \zeta(Z) = (\xi(Z_0),y), 
\] 
where $Z_0 \subset Z$ is the union of all non-annular components of $Z$, $\xi(Z_0)$ is the complexity of $Z_0$ (as defined in \Cref{section.prelim}), and $y$ is the number of annular components of $Z$.  Observe that $\xi(Z_0) + y$ is the number of pairwise disjoint, non-parallel curves in $Z$ (here the core curve of an annulus is considered an essential curve in the annulus).
We consider such pairs as elements of $\mathbb Z^2$ with the dictionary order.

Suppose that $Z \subset Z'$ are lean subsurfaces of $S$ (where we assume that any annular component of $Z$ is either contained in an annular component of $Z'$, or in a non-annular component of $Z'$ in which it is non-peripheral). Then $\zeta(Z) \leq \zeta(Z')$, with equality if and only if there is a homeomorphism $h \colon S \to S$ isotopic to the identity so that $h(Z) = Z'$.  Moreover, observe that if we write $\zeta(Z) = (\xi,y)$ and $\zeta(Z') = (\xi',y')$ and if $Z'$ is the union of $Z$ with the regular neighborhood of a multicurve, then $\zeta(Z) \leq \zeta(Z')$ implies that $\xi+y \leq \xi'+y'$. 

Now, suppose to the contrary that there is a curve $\alpha \subset f^{-k}(Y) \cap Y$ for some $k \geq 2 \xi(Y)$.  Then, $f^k(\alpha) \subset Y$, and since $S-Y = U_+ \cup U_-$, it follows that $f^j(\alpha) \subset Y$ for all $j=0,\ldots,k$, since $f(U_+) \subset U_+$. 

For each $j=0,\ldots,k$, let $Z_j \subset Y$ be the smallest lean subsurface filled by
\[ 
    \bigcup_{i=0}^j f^i(\alpha),
\]
and write $\zeta(Z_j) = (\xi_j,y_j)$.
Then $\zeta(Z_j) \leq \zeta(Z_{j+1}) \leq \zeta(Y) = (\xi(Y),0)$ and $\xi_j+y_j \leq \xi_{j+1}+y_{j+1} \leq \xi(Y)$ for all $0\leq j < k$.  Observe that for any $j = 0, \ldots, k$, either $\zeta(Z_j) = \zeta(Z_{j+1})$, or one of the following strict inequalities must hold:
\[ 
    \xi_j < \xi_{j+1} \quad \mbox{ or } \quad  \xi_j+y_j < \xi_{j+1} + y_{j+1}. 
\]
This implies that the sequence of $L^1$--norms of the pairs $\{(\xi_j,\xi_j+y_j)\}_{j=0}^k$ is a non-decreasing sequence of integers from $1$ to $2\xi_k + y_k \leq 2\xi(Y)$. 
But since $k \geq 2\xi(Y)$, there must be consecutive pairs $(\xi_j,\xi_j+y_j)$ and $(\xi_{j+1},\xi_{j+1}+y_{j+1})$ whose $L^1$--norms are equal, and so, for this $j$, we have $\zeta(Z_j) = \zeta(Z_{j+1})$.  In particular, there is a homeomorphism $h \colon S\to S$ isotopic to the identity so that $h(Z_j) = Z_{j+1}$.

Since $f(Z_j) \subset Z_{j+1}$ and $\zeta(f(Z_j)) = \zeta(Z_j)$, there is a homeomorphism $h' \colon S \to S$ isotopic to the identity so that $h'(f(Z_j)) = Z_{j+1} = h(Z_j)$.
Rewriting this, we have $h^{-1}h'f(Z_j) = Z_j$.  
But $h^{-1}h'f$ is isotopic to $f$, and we conclude that $f$ preserves $Z_j$, and hence $\partial Z_j$, up to isotopy, contradicting the strong irreducibility of $f$.
\end{proof}

\subsection{Cores in the compactified mapping torus} \label{S:viscera defined}

The suspension flow on $M_f$ can be reparameterized and extended to a local flow $(\psi_s)$ on $\overline M_f$.  
Fixing a core $Y$ for $f$ with the associated (tight) nesting neighborhoods, $U_+$ and $U_-$, we can define a homotopy of the inclusion $S \to \overline M_f$ along the flowlines, by flowing $U_+$ and $U_-$ forward and backward, respectively, until they meet $\partial \overline M_f$.  We do this, carrying along a small neighborhood of $\partial Y$ in $Y$, but keeping the rest of $Y$ fixed.  If we let $h_t \colon S \to \overline M_f$, $t \in [0,1]$ denote the homotopy, then we can assume that $h_t$ is injective for all $t \in [0,1)$.  One way to think about this construction is via {\em spiraling neighborhoods} of the boundary; see {\em e.g.}~\cite[\S4]{Fenley-depth-one},\cite[\S3.1]{LandryMinskyTaylor2023}.
We write $Y_1 = h_1(Y) \subset \overline M_f$, and call $Y_1$ the {\em $Y$--viscera}.  It is convenient to think of $h_1(S)$ as a branched surface in $\overline M_f$, transverse to $(\psi_s)$.

Since the first return map of $(\psi_s)$ to $S$ is $f$, the result is a map of $S$ into $\overline M_f$ which is embedded on the interior of $Y$, and for which $U_+$ and $U_-$ map onto $\partial \overline M_f \cong S_+ \sqcup S_-$.  After adjusting (precomposing) $f$ by an isotopy supported in a small neighborhood of $\partial Y$, we may assume that $f(\overline U_+) \subset U_+$ and $f^{-1}(\overline U_-) \subset U_-$.  Having done so, $Y_1 \subset \overline M_f$ is then properly embedded. This assumption is only made to carry out the proofs in this section.  Without it, we can carry out the above homotopy, and the $Y$--viscera will be embedded on the interior, but not necessarily on the boundary.  In particular, we only make this assumption in this section.

In the remainder of this section, we may need to adjust $f$ by an isotopy which is the identity outside of $Y$.  This does not affect the homeomorphism type of the pair $(\overline M_f,Y_1)$, and we use the same notation to denote the new pair.

A \emph{boundary-compressing disk} for $Y_1 \subset \overline M_f$ (or more generally, for any properly embedded surface) is an embedded disk $D \subset \overline M_f$ such that $\partial D = \alpha \cup \beta$, where $\alpha$ is a properly embedded essential arc in $Y_1$, $\beta \subset \partial \overline M_f$, $\alpha$ and $\beta$ intersect precisely in their endpoints, and the interior of $D$ is disjoint from $Y_1 \cup \partial \overline M_f$.  We can perform a homotopy of $Y_1$, rel $\partial Y_1$, ``pushing across $D$" so that a neighborhood of $\alpha$ in $Y_1$ is mapped into $\partial \overline M_f$. See \Cref{F:boundary-compressing-disk}. 

\begin{figure}[htb!]
\def\svgwidth{5in}
    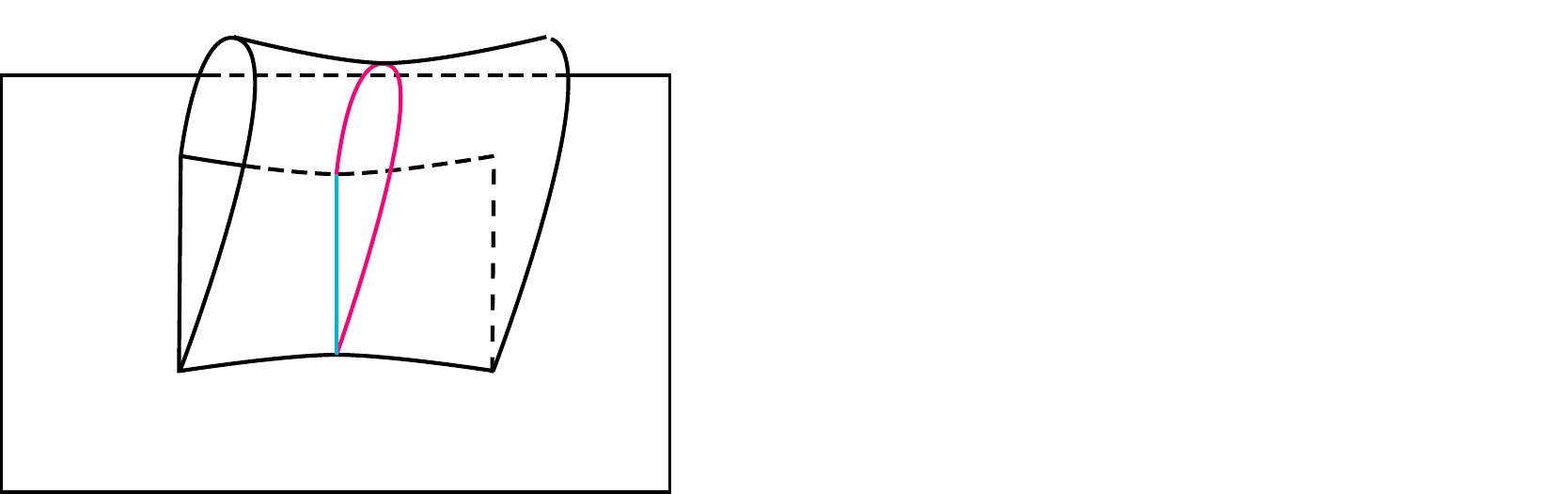
\caption{A boundary-compressing disk $D$ (shaded purple) cobounded by an arc $\alpha \subset Y_1$ and an arc $\beta \subset \partial \overline M_f$. On the right of the figure is $Y_1$ after $D$ is compressed.} \label{F:boundary-compressing-disk}
\end{figure}

A {\em flow-compressing disk} for $Y_1$ is a boundary-compressing disk $D \subset \overline M_f$ for $Y_1$ that is foliated by flowlines transverse to $\alpha$ and $\beta$.  Given a flow-compressing disk $D$, if the arc $\beta$ lies in $S_+ \subset \partial \overline M_f$, then $\alpha' = h_1^{-1}(\alpha)$ is an arc in $Y$, and $\beta = h_1(f(\alpha'))$.  Since $h_1(f(\alpha')) \subset S_+$, it follows that $f(\alpha') \subset U_+$. Consequently, a flow-compressing disk is really determined by a properly embedded essential arc $\alpha' \subset Y$ that is mapped entirely into $U_+$ by $f$ (or into $U_-$ by $f^{-1}$). In this case, the homotopy of $h_1(S)$ obtained from the boundary compression of $Y_1$ along $D$ can be carried out transverse to $(\psi_s)$. See Figure~\ref{F:flow-compressing-disk}.  The part of the homotoped image of $Y_1$ that remains in the interior $M_f$, also determines a core for $f$ which necessarily has larger Euler characteristic.

\begin{figure}[htb!]
\def\svgwidth{3.5in}
\begingroup%
  \makeatletter%
  \providecommand\color[2][]{%
    \errmessage{(Inkscape) Color is used for the text in Inkscape, but the package 'color.sty' is not loaded}%
    \renewcommand\color[2][]{}%
  }%
  \providecommand\transparent[1]{%
    \errmessage{(Inkscape) Transparency is used (non-zero) for the text in Inkscape, but the package 'transparent.sty' is not loaded}%
    \renewcommand\transparent[1]{}%
  }%
  \providecommand\rotatebox[2]{#2}%
  \newcommand*\fsize{\dimexpr\f@size pt\relax}%
  \newcommand*\lineheight[1]{\fontsize{\fsize}{#1\fsize}\selectfont}%
  \ifx\svgwidth\undefined%
    \setlength{\unitlength}{494.08690233bp}%
    \ifx\svgscale\undefined%
      \relax%
    \else%
      \setlength{\unitlength}{\unitlength * \real{\svgscale}}%
    \fi%
  \else%
    \setlength{\unitlength}{\svgwidth}%
  \fi%
  \global\let\svgwidth\undefined%
  \global\let\svgscale\undefined%
  \makeatother%
  \begin{picture}(1,0.52774967)%
    \lineheight{1}%
    \setlength\tabcolsep{0pt}%
    \put(0,0){\includegraphics[width=\unitlength,page=1]{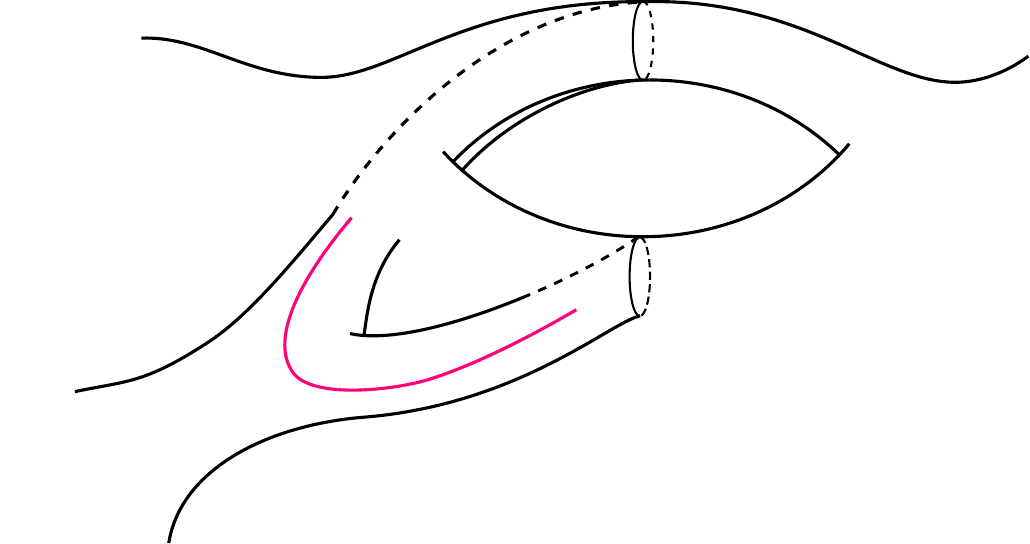}}%
    \put(0.18891449,0.10545777){\color[rgb]{0,0,0}\makebox(0,0)[rt]{\lineheight{1.25}\smash{\begin{tabular}[t]{r}$Y_1$\end{tabular}}}}%
    \put(0.98935188,0.37968739){\color[rgb]{0,0,0}\makebox(0,0)[rt]{\lineheight{1.25}\smash{\begin{tabular}[t]{r}$\partial \overline M_f$\end{tabular}}}}%
    \put(0,0){\includegraphics[width=\unitlength,page=2]{flow-compressing-disk.pdf}}%
    \put(0.26669735,0.39263959){\color[rgb]{0,0,0}\makebox(0,0)[rt]{\lineheight{1.25}\smash{\begin{tabular}[t]{r}$\beta$\end{tabular}}}}%
    \put(0.26638272,0.14442327){\color[rgb]{0,0,0}\makebox(0,0)[rt]{\lineheight{1.25}\smash{\begin{tabular}[t]{r}$\alpha$\end{tabular}}}}%
    \put(0,0){\includegraphics[width=\unitlength,page=3]{flow-compressing-disk.pdf}}%
    \put(0.33285453,0.02688302){\color[rgb]{0,0,0}\makebox(0,0)[lt]{\lineheight{1.25}\smash{\begin{tabular}[t]{l}$D$\end{tabular}}}}%
    \put(0,0){\includegraphics[width=\unitlength,page=4]{flow-compressing-disk.pdf}}%
  \end{picture}%
\endgroup%

\caption{If a boundary-compressing disk $D$ is foliated by flowlines transverse to $\alpha$ and $\beta$, as shown here, then we call it a flow-compressing disk.  Here $\alpha = h_1(\alpha')$ and $\beta = h_1(f(\alpha'))$.} \label{F:flow-compressing-disk}
\end{figure}

\begin{lemma} \label{L:reducing the core}
If $D \subset \overline M_f$ is a flow-compressing disk for the $Y$--viscera, then there is a core $Y' \subset Y$ for $f$ such that $\chi(Y') > \chi(Y)$.
\end{lemma}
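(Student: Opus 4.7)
The plan is to translate the flow-compressing disk $D$ into the data of a properly embedded essential arc in the core $Y$, and then build $Y'$ by cutting $Y$ along that arc.

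First, I would extract the arc from $D$. Without loss of generality, assume $\beta \subset S_+$; the case $\beta \subset S_-$ is symmetric (replace $f$ by $f^{-1}$). As noted in the discussion preceding the lemma, a flow-compressing disk for $Y_1$ is determined by a properly embedded essential arc $\alpha' \subset Y$ with $f(\alpha') \subset U_+$, where $\alpha = h_1(\alpha')$ and $\beta = h_1(f(\alpha'))$. The key observation is that both endpoints of $\alpha'$ lie on $\partial_+Y$: the endpoints of $\alpha$ lie on $\beta \subset S_+$, and within $\partial Y_1 = h_1(\partial_+Y) \sqcup h_1(\partial_-Y)$ only the component $h_1(\partial_+Y)$ is contained in $S_+$ (with $h_1(\partial_-Y) \subset S_-$).

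Next, choose a closed regular neighborhood $N(\alpha') \subset Y$ of $\alpha'$, small enough that $f(N(\alpha')) \subset U_+$ (possible by continuity of $f$ and openness of $U_+$), and set $Y' = Y \setminus \operatorname{int}(N(\alpha'))$. Since $\alpha'$ is essential, $Y'$ is a compact subsurface of $Y$ with no disk components, and a standard inclusion-exclusion computation (the regular neighborhood is a disk meeting $Y'$ in two arcs) gives $\chi(Y') = \chi(Y) + 1$.

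It remains to verify that $Y'$ is a core for $f$. Set $U_-' = U_-$ and $U_+' = U_+ \cup N(\alpha')$. Since both endpoints of $\alpha'$ lie on $\partial_+Y$, the band $N(\alpha')$ is disjoint from $\overline{U_-}$, so $U_+'$ and $U_-'$ are disjoint and their union is $S \setminus \operatorname{int}(Y')$. The invariance $f(U_+') = f(U_+) \cup f(N(\alpha')) \subset U_+ \subset U_+'$ holds by our choice of neighborhood, so iterates $\{f^n(U_+')\}_{n \geq 0}$ still form a neighborhood basis of the attracting ends, and $f^{-1}(U_-') \subset U_-'$ is inherited from $Y$ being a core. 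Finally, $\partial \overline{U_+'}$ is obtained from the multicurve $\partial_+Y$ by a band surgery along $\alpha'$ with both feet on $\partial_+Y$, which produces a (possibly new) disjoint union of simple closed curves. Thus $U_\pm'$ are tight nesting neighborhoods, $Y'$ is a core, and $\chi(Y') > \chi(Y)$.

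The main subtle point is the localization of the endpoints of $\alpha'$ on $\partial_+Y$ rather than $\partial_-Y$; without this the added band would intrude into $\overline{U_-}$, merging the positive and negative nesting neighborhoods and destroying the core structure. Everything else is essentially bookkeeping about how a 1-handle (band) attachment interacts with boundary components and with the forward dynamics of $f$.
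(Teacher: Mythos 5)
Your proof is correct, and it reaches the same core $Y'$ as the paper's proof (remove a regular neighborhood of $\alpha'$ from $Y$), but the verification that $Y'$ is a core is carried out by a genuinely different and more elementary route. The paper works in $\overline M_f$: it takes a flow-box neighborhood $N$ of $D$, pushes $X = N \cap Y_1$ along flowlines into $\partial \overline M_f$ to build an auxiliary homotopy $g_t$, and then argues that for $x \in W_+$ the forward orbit $\{f^n(x)\}_{n \geq 0}$ stays in $W_+$ by examining which points of the maximal forward flowline $\ell_x^+$ get sent to $S_+$ by $g_1$. You instead stay entirely in the surface $S$: you isolate the fact that both endpoints of $\alpha'$ lie on $\partial_+Y$ (implicit but never stated in the paper --- it follows since $\partial\alpha$ lies on $\beta \subset S_+$ and $\partial Y_1 \cap S_+ = h_1(\partial_+Y)$), pick $N(\alpha')$ small enough that $f(N(\alpha')) \subset U_+$ by continuity, and then check tightness of $U_\pm'$ directly from $f(U_+)\subset U_+$ and $f(N(\alpha'))\subset U_+$. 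This replaces the flowline argument with a one-line computation and makes the endpoint constraint explicit, at the cost of needing the small-neighborhood continuity argument that the paper gets for free from taking $N$ to be a union of flowline segments. One small point of bookkeeping worth tidying: a tight nesting neighborhood should be open, so $U_+'$ should really be $U_+ \cup \mathrm{int}_Y\bigl(N(\alpha')\bigr)$ rather than $U_+ \cup N(\alpha')$, so that $U_+' \sqcup U_-' = S \setminus Y'$ on the nose; the substance of the argument is unaffected.
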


\begin{proof}
Suppose that $D$ intersects $S_+$ in the arc $\beta$ (the proof in the case that $\beta \subset S_-$ is identical except with $f$ replaced by $f^{-1}$).

There is a small neighborhood $N$ of $D$ which is also a union of segments of flowlines, so that $X:= N \cap Y_1$ is a regular neighborhood of $\alpha$.
Let $Y'' = \overline{Y_1 - X}$ and set $Y' = h_1^{-1}(Y'')$, where $h_t$ is the homotopy defining $Y_1$.  Observe that $\chi(Y') > \chi(Y)$ since $Y$ is obtained from $Y'$ by adding a $1$--handle.
We can use $N$ to push $X$ along flowlines until it lies entirely inside $\partial \overline M_f$.  Concatenating this homotopy with $h_t$, we get a homotopy $g_t \colon S \to \overline M_f$ pushing along flowlines, so that $g_1^{-1}(\partial \overline M_f) = \overline{S - Y'}$.

Now observe that $Y'$ is a core for $f$.  Indeed, let $W_+$ and $W_-$ be the unions of components of $S - Y'$ containing $U_+$ and $U_-$, respectively.  Given $x \in W_+$, consider its maximal forward $(\psi_s)$--flowline in $\overline M_f$,
\[ \ell_x^+ = \bigcup_{s \geq 0} \psi_s(x),\]
where the union is over all $s \geq 0$ for which $\psi_s(x)$ is defined.  Every point of $\ell_x^+ \cap S$ is mapped by $g_1$ into $S_+$, and in particular, these points all lie in $W_+$.  On the other hand, $\ell_x^+ \cap S$ is the union of forward images of $x$ by $f$ (since the first return map of $(\psi_s)$ is $f$), and thus $f(x) \in W_+$.  Therefore, $f(W_+) \subset W_+$.
By similarly analyzing a backward flowline $\ell_x^-$, we can see that $f^{-1}(W_-) \subset W_-$, proving that $W_+$ and $W_-$ are tight nesting neighborhoods of the ends, and thus $Y'$ is a core for $f$.
\end{proof}

The next lemma says that we can promote an arbitrary $\partial$--compressing disk to a flow-compressing disk.

\begin{lemma} \label{L:good compressing disk}
If the $Y$--viscera in $\overline M_f$ is boundary compressible, then after adjusting $f$ by an isotopy which is the identity outside $Y$, there is a flow-compressing disk $D \subset \overline M_f$.
\end{lemma}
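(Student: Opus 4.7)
The plan is to use the boundary-compression disk $D$ to guide an isotopy of $f$ supported in $Y$, after which the forward flow of the corresponding arc in $Y$ will trace out the desired flow-compressing disk. Without loss of generality, I would assume $\beta \subset S_+$ (the case $\beta \subset S_-$ is analogous after replacing $f$ with $f^{-1}$), and set $\alpha' = h_1^{-1}(\alpha) \subset Y$, which is an essential properly embedded arc with both endpoints on $\partial_+ Y$.

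First I would lift $D$ to a disk $\tilde D \subset \widetilde M_\infty$ via the covering $p$, and project $\tilde D$ to $S$ using the first-factor projection $S \times [-\infty,\infty] \to S$. The boundary $\partial \tilde D = \tilde \alpha \cup \tilde \beta$ projects to $\alpha' \cup \beta'$, where $\beta' \subset \mathcal U_+$ is an arc sharing endpoints with $\alpha'$ on $\partial_+ Y$. Standard innermost-disk and cut-and-paste arguments (removing intersection circles with $\partial Y$ using irreducibility of $S$, and simplifying self-intersections of $\beta'$) let us replace $D$ so that $\alpha' \cup \beta'$ is a simple loop in $S$ bounding an embedded disk $D' \subset S$.

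The central step is to promote $D'$ to an isotopy of $f$ supported in $Y$ achieving $f(\alpha') \subset U_+$. The arcs of $D' \cap \partial Y$ cut $D'$ into regions alternately lying in $Y$ and in $S \setminus Y$; since $\beta' \subset \mathcal U_+$ and $\alpha'$ is essential in $Y$, the non-$Y$ regions can be arranged to lie in $\overline U_+$, and the $Y$-regions chain into a proper isotopy of $\alpha'$ in $Y$ rel endpoints to an arc $\eta \subset Y \cap f^{-1}(U_+)$. Realizing this as a homeomorphism $\psi \colon S \to S$ isotopic to the identity via an isotopy supported in $Y$ with $\psi(\alpha') = \eta$, the adjusted map $f' = f \circ \psi^{-1}$ agrees with $f$ on $S \setminus Y$ and satisfies $f'(\alpha') = f(\eta) \subset U_+$.

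Finally, under the adjusted $f'$, the union of forward $(\psi_s)$-flow segments issuing from $h_1'(\alpha') \subset Y_1'$ until they meet $\partial \overline M_{f'}$ forms an embedded disk $D''$ foliated by flow lines, bounded by $h_1'(\alpha') \cup h_1'(f'(\alpha'))$; this is the desired flow-compressing disk. I expect the main obstacle to be the central step: translating the $S$-level parallelism between $\alpha'$ and $\beta' \subset \mathcal U_+$ into a $Y$-level isotopy of $\alpha'$ into $f^{-1}(U_+) \cap Y$. The difficulty is that $\beta'$ may lie in $f^{-n}(U_+)$ with $n > 1$, so it can thread through $Y \cap \mathcal U_+$; arranging the non-$Y$ regions of $D'$ to lie in $\overline U_+$ and organizing the $\partial Y$-arcs properly requires careful bookkeeping, with strong irreducibility of $f$ ruling out problematic periodic or reducing configurations.
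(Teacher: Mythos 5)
Your lift-and-project strategy is essentially the same idea the paper uses in the negative-side half of its case analysis, and it can indeed be made to work, but the gap you flag as the ``main obstacle'' is a symptom of not having pinned down where $\beta'$ actually lives. Since the interior of $\tilde D$ is disjoint from $p^{-1}(Y_1)\cup\partial\widetilde M_\infty$ and $\tilde\alpha\subset\tilde Y_1$, the disk $\tilde D$ lies in the closure of exactly one of the two complementary regions of $p^{-1}(Y_1)$ adjacent to $\tilde Y_1$. The $\{t=\infty\}$--face of the region on the \emph{negative} side of $\tilde Y_1$ projects into $\Delta_+\subset\overline{U_+}$, and the $\{t=\infty\}$--face of the region on the \emph{positive} side projects into $Y\cap f^{-1}(U_+)$. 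Thus $\beta'$ is never free to wander into $f^{-n}(U_+)$ with $n>1$; the entire bookkeeping you worry about --- alternating $Y$/non-$Y$ regions of $D'$, threading through $Y\cap\mathcal U_+$ --- simply does not arise. In the negative case you get $\beta'\subset\overline{U_+}$ sharing endpoints with the essential arc $\alpha'$, giving the same contradiction the paper derives (and which your write-up omits: this is the case analysis the paper makes explicit). In the positive case $\beta'\subset Y\cap f^{-1}(U_+)$ already, both $\alpha'$ and $\beta'$ lie in the incompressible subsurface $\overline Y$, so $D'$ can be taken inside $\overline Y$ and you may set $\eta=\beta'$ outright; the ``central step'' collapses.

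Two smaller points. First, strong irreducibility plays no role here --- the paper invokes it elsewhere, but Lemma 3.4 is purely a transversality/lift argument and your appeal to it is a misplaced safety net. Second, with $\psi(\alpha')=\eta$ and $\psi$ supported in $Y$, the correct adjustment is $f'=f\circ\psi$, not $f\circ\psi^{-1}$: you want $f'(\alpha')=f(\psi(\alpha'))=f(\eta)\subset U_+$, whereas $f\circ\psi^{-1}$ evaluates at $\psi^{-1}(\alpha')$, which is not $\eta$. (The paper phrases this as precomposing $f$ by an isotopy that is the identity outside $Y$.) So: right idea, arguably a cleaner route than the paper's flow-rectangle argument once the complementary-region structure is in hand, but the proposal as written leaves the crux unfinished precisely because that structure is not identified.
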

\begin{proof}
The local flow $(\psi_s)$ defines a transverse orientation to $Y_1$.  Suppose $D$ is a compressing disk with boundary arcs $\alpha \subset Y_1$ and $\beta \subset \partial \overline M_f$.  We assume that $\beta \subset S_+$, with the case $\beta \subset S_-$ proved by replacing $f$ with $f^{-1}$.  Since $D$ is a compressing disk, $D \cap Y_1 = \alpha$, and so $D$ must either be on the positive or negative side of $Y$ near $\alpha$.

If $D$ is on the negative side, we observe that there are arcs $\alpha',\beta' \subset S$ meeting in their endpoints such that $h_1(\alpha') = \alpha$ and $h_1(\beta') = \beta$. See \Cref{F:lemma-3.3}. We can piece together a map of a disk $D' \to \overline M_f$ from the homotopy $h_t$ and the disk $D$ so that the boundary of $D'$ maps homeomorphically to $\alpha' \cup \beta'$.  We lift this to a map $D' \to \widetilde M_\infty$ so that the boundary of $D'$ maps homeomorphically to $(\alpha' \cup \beta') \times \{0\}$ in $S \times \{0\}$.  Projecting $\widetilde M_\infty$ onto the first factor, this in turn defines a homotopy from $\alpha'$ to $\beta'$ in $S$, rel endpoints.  However, $\beta'$ is an arc in $\overline U_+$, while $\alpha'$ is an essential arc in $Y$.  Since an essential arc in $Y$ cannot be homotoped outside of $Y$, this is a contradiction.

\begin{figure}[htb!]
\def\svgwidth{3.5in}
\begingroup%
  \makeatletter%
  \providecommand\color[2][]{%
    \errmessage{(Inkscape) Color is used for the text in Inkscape, but the package 'color.sty' is not loaded}%
    \renewcommand\color[2][]{}%
  }%
  \providecommand\transparent[1]{%
    \errmessage{(Inkscape) Transparency is used (non-zero) for the text in Inkscape, but the package 'transparent.sty' is not loaded}%
    \renewcommand\transparent[1]{}%
  }%
  \providecommand\rotatebox[2]{#2}%
  \newcommand*\fsize{\dimexpr\f@size pt\relax}%
  \newcommand*\lineheight[1]{\fontsize{\fsize}{#1\fsize}\selectfont}%
  \ifx\svgwidth\undefined%
    \setlength{\unitlength}{554.16823586bp}%
    \ifx\svgscale\undefined%
      \relax%
    \else%
      \setlength{\unitlength}{\unitlength * \real{\svgscale}}%
    \fi%
  \else%
    \setlength{\unitlength}{\svgwidth}%
  \fi%
  \global\let\svgwidth\undefined%
  \global\let\svgscale\undefined%
  \makeatother%
  \begin{picture}(1,0.47053256)%
    \lineheight{1}%
    \setlength\tabcolsep{0pt}%
    \put(0,0){\includegraphics[width=\unitlength,page=1]{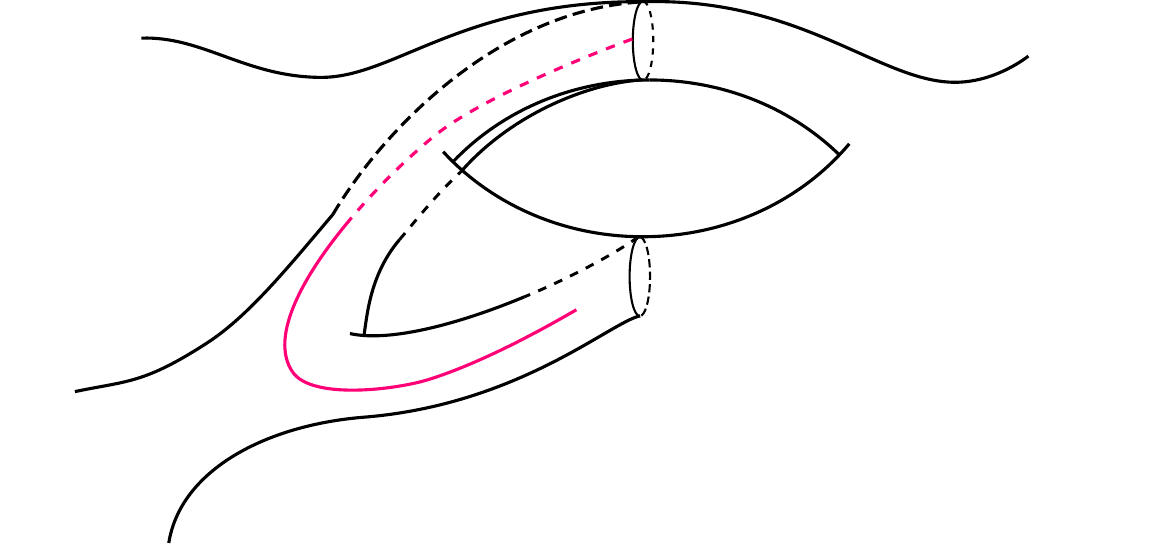}}%
    \put(0.16843292,0.09402434){\color[rgb]{0,0,0}\makebox(0,0)[rt]{\lineheight{1.25}\smash{\begin{tabular}[t]{r}$Y_1$\end{tabular}}}}%
    \put(0.1290069,0.33399525){\color[rgb]{0,0,0}\makebox(0,0)[lt]{\lineheight{1.25}\smash{\begin{tabular}[t]{l}$\partial \overline M_f$\end{tabular}}}}%
    \put(0,0){\includegraphics[width=\unitlength,page=2]{lemma-3.3.pdf}}%
    \put(0.79033189,0.34074487){\color[rgb]{0,0,0}\makebox(0,0)[lt]{\lineheight{1.25}\smash{\begin{tabular}[t]{l}$\beta$\end{tabular}}}}%
    \put(0.23750226,0.12876531){\color[rgb]{0,0,0}\makebox(0,0)[rt]{\lineheight{1.25}\smash{\begin{tabular}[t]{r}$\alpha$\end{tabular}}}}%
    \put(0,0){\includegraphics[width=\unitlength,page=3]{lemma-3.3.pdf}}%
  \end{picture}%
\endgroup%

\caption{The arcs $\alpha \subset Y_1$ and $\beta \subset \partial \overline M_f$ when $D$ is on the negative side of $Y$ near $\alpha$} \label{F:lemma-3.3}
\end{figure}

From the previous paragraph, we may assume that $D$ is on the positive side of $Y$ near $\alpha$. See \Cref{F:complicated-compressing-disk}. We can again find arcs $\alpha',\beta' \subset S$ so that $h_1(\alpha') = \alpha$ and $h_1(\beta') = \beta$, but now these arcs do not meet at their endpoints. Instead, we can find such arcs so that the endpoints of $\beta'$ are the first return points of the endpoints of $\alpha'$ by $(\psi_s)$, {\em i.e.}~the $f$--image of the endpoints of $\alpha'$.  Since flowing $\alpha'$ forward until it hits $S$, the image is precisely $f(\alpha')$, we see that $f(\alpha')$ is an arc with the same endpoints as $\beta'$.

\begin{figure}[htb!]
\def\svgwidth{7in}
\begingroup%
  \makeatletter%
  \providecommand\color[2][]{%
    \errmessage{(Inkscape) Color is used for the text in Inkscape, but the package 'color.sty' is not loaded}%
    \renewcommand\color[2][]{}%
  }%
  \providecommand\transparent[1]{%
    \errmessage{(Inkscape) Transparency is used (non-zero) for the text in Inkscape, but the package 'transparent.sty' is not loaded}%
    \renewcommand\transparent[1]{}%
  }%
  \providecommand\rotatebox[2]{#2}%
  \newcommand*\fsize{\dimexpr\f@size pt\relax}%
  \newcommand*\lineheight[1]{\fontsize{\fsize}{#1\fsize}\selectfont}%
  \ifx\svgwidth\undefined%
    \setlength{\unitlength}{889.83150402bp}%
    \ifx\svgscale\undefined%
      \relax%
    \else%
      \setlength{\unitlength}{\unitlength * \real{\svgscale}}%
    \fi%
  \else%
    \setlength{\unitlength}{\svgwidth}%
  \fi%
  \global\let\svgwidth\undefined%
  \global\let\svgscale\undefined%
  \makeatother%
  \begin{picture}(1,0.29303772)%
    \lineheight{1}%
    \setlength\tabcolsep{0pt}%
    \put(0,0){\includegraphics[width=\unitlength,page=1]{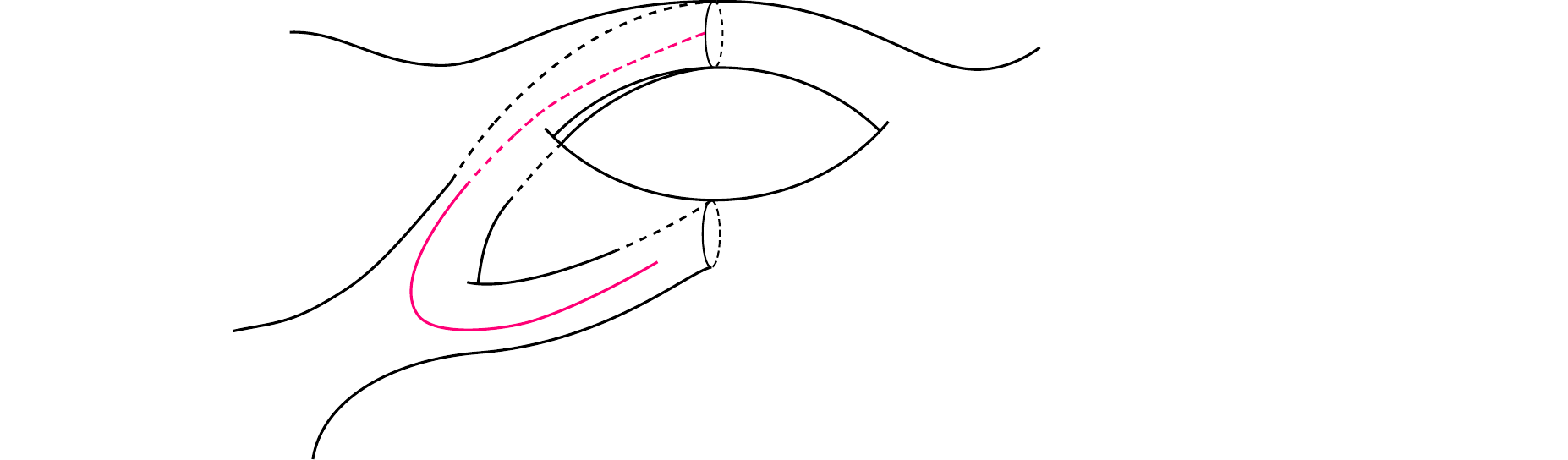}}%
    \put(0.21340727,0.05855637){\color[rgb]{0,0,0}\makebox(0,0)[rt]{\lineheight{1.25}\smash{\begin{tabular}[t]{r}$Y_1$\end{tabular}}}}%
    \put(0.60049933,0.20957634){\color[rgb]{0,0,0}\makebox(0,0)[lt]{\lineheight{1.25}\smash{\begin{tabular}[t]{l}$\partial \overline M_f$\end{tabular}}}}%
    \put(0,0){\includegraphics[width=\unitlength,page=2]{complicated-compressing-disk.pdf}}%
    \put(0.13039578,0.2246588){\color[rgb]{0,0,0}\makebox(0,0)[rt]{\lineheight{1.25}\smash{\begin{tabular}[t]{r}$\beta$\end{tabular}}}}%
    \put(0.25642219,0.08019231){\color[rgb]{0,0,0}\makebox(0,0)[rt]{\lineheight{1.25}\smash{\begin{tabular}[t]{r}$\alpha$\end{tabular}}}}%
    \put(0,0){\includegraphics[width=\unitlength,page=3]{complicated-compressing-disk.pdf}}%
    \put(0.29137972,0.01472463){\color[rgb]{0,0,0}\makebox(0,0)[lt]{\lineheight{1.25}\smash{\begin{tabular}[t]{l}$D$\end{tabular}}}}%
    \put(0,0){\includegraphics[width=\unitlength,page=4]{complicated-compressing-disk.pdf}}%
  \end{picture}%
\endgroup%

\caption{A boundary-compressing disk $D$, which is not foliated by flowlines.} \label{F:complicated-compressing-disk}
\end{figure}

Observe that if $f(\alpha') \subset U_+$, then $h_1(\alpha' \cup f(\alpha'))$ is the boundary of a flow compressing disk, and we are done.  Because we have not imposed any constraints on the behavior of $f$ inside of $Y$, we may not have $f(\alpha') \subset U_+$, in which case we claim we can adjust $f$ by an isotopy supported in $Y$ so that the new homeomorphism does send $\alpha'$ into $U_+$.

To find the required isotopy, we first observe that using the disk $D$, we can construct a homotopy, rel endpoints, from $f(\alpha')$ to $\beta'$.  To do this, we consider the ``rectangle" which is a union of the flowlines between $\alpha'$ and $f(\alpha')$ in $\overline M_f$, and drag this along via the homotopy $h_t$ to define a disk $D' \subset \overline M_f$ whose boundary is the union of the two arcs $h_1(\alpha') = \alpha$ and $h_1(f(\alpha'))$, and whose interior is contained in a component of the complement of $\partial \overline M_f \cup Y_1$.  Since both disks $D$ and $D'$ have their interiors in the same complementary component of $\overline M_f \cup Y_1$, their union $D \cup D'$ can be pushed forward via the flow into the branched surface $h_1(S)$, and then lifted back to $S$ to define the homotopy, rel endpoints from $f(\alpha')$ to $\beta'$.

Now we assume, as we may, that $f(\partial Y)$ meets $\partial Y$ transversely and minimally.  Then since $\beta' \subset U_+$ is an arc disjoint from $\partial Y$ and it is homotopic, rel endpoints, to $f(\alpha')$ which meets $f(\partial Y)$ only in its endpoints, it follows that we may postcompose $f$ with an isotopy that is the identity outside $f(Y)$, so that $f(\alpha')$ is disjoint from $\partial Y$, and thus contained in $U_+$.  This is equivalent to precomposing $f$ by an isotopy that is the identity outside $Y$.  Replacing $f$ with this isotopic homeomorphism, $f(\alpha') \subset U_+$, and thus $\alpha = h_1(\alpha')$ and $h_1(f(\alpha'))$ defines a flow-compressing disk, as required.
\end{proof}
From the two lemmas above, we deduce the following.
\begin{proposition} \label{P:minimal core bdy-inc}
If $Y \subset S$ is a minimal core for a strongly irreducible end-periodic homeomorphism and $Y_1 \subset \overline M_f$ is the $Y$--viscera, then $Y$ is boundary incompressible.
\end{proposition}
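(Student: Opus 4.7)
The plan is to argue by contradiction, using \Cref{L:good compressing disk} and \Cref{L:reducing the core} as a one-two punch. I would assume that $Y_1$ is boundary compressible in $\overline M_f$ and produce a core $Y' \subset Y$ for $f$ satisfying $\chi(Y') > \chi(Y)$, directly contradicting the assumption that $Y$ is a \emph{minimal} core (i.e., that $\chi(Y) = \chi(f)$).

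First I would apply \Cref{L:good compressing disk} to promote the hypothesized $\partial$--compressing disk to an actual flow-compressing disk $D \subset \overline M_f$ for the $Y$--viscera. This step requires adjusting $f$ by an isotopy, but the lemma guarantees that the isotopy may be taken to be supported inside $Y$; in particular, $f$ is unchanged on $U_+ \cup U_-$, so the tight nesting neighborhoods persist and $Y$ remains a core for the newly-isotoped map.

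Next I would feed $D$ into \Cref{L:reducing the core}. That lemma hands back a core $Y' \subset Y$ for $f$ with $\chi(Y') > \chi(Y)$, which contradicts the defining maximality of a minimal core. The proof then concludes.

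The only point demanding a brief sanity check — and the place I expect any real bookkeeping to live — is that the isotopy of $f$ used by \Cref{L:good compressing disk} does not corrupt the meaning of ``minimal core'': because this isotopy is the identity outside $Y$, it does not alter the set of cores for $f$ up to isotopy, and in particular the value $\chi(f) = \max_Y \chi(Y)$ is unchanged. Thus the inequality $\chi(Y') > \chi(Y) = \chi(f)$ produced at the end really is a contradiction, not an artifact of replacing $f$ by an isotopic representative. With this observation in place, the proof is essentially the one-line composition \Cref{L:good compressing disk} $\Rightarrow$ \Cref{L:reducing the core} $\Rightarrow$ contradiction.
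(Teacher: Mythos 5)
Your argument is exactly the paper's: invoke \Cref{L:good compressing disk} to upgrade the putative boundary-compressing disk to a flow-compressing disk, then invoke \Cref{L:reducing the core} to produce a core with strictly larger Euler characteristic, contradicting minimality. The extra paragraph checking that the isotopy of $f$ (supported in $Y$) does not change $\chi(f)$ is a sensible, correct sanity check that the paper leaves implicit.
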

\begin{proof}
If $Y_1$ were boundary compressible, \Cref{L:good compressing disk} would providuce a flow-compressing disk, then \Cref{L:reducing the core} would produce a new core $Y' \subset Y$ with $\chi(Y') > \chi(Y)$, which contradicts minimality of $Y$. 
\end{proof}

\section{Pleated surfaces}
\label{section.pleated}
For the remainder of this section, we assume that $f \colon S \to S$ is a strongly irreducible end-periodic homeomorphism.  Throughout, $Y \subset S$ will denote a core for~$f$.

A {\em pants--lamination} on a hyperbolic surface $X$ is a geodesic lamination whose leaves are the curves of a pants decomposition together with isolated leaves such that each complementary component is an ideal triangle spiraling into all three cuffs of its pair of pants, see \Cref{fig:ideal-triangulation-pants}.  We also make a technical assumption that for each pants curve, the leaves that spiral in towards that curve do so ``in the same direction" on both sides of the curve. This utility in this assumption is clarified in \Cref{S:interpolation} (see the remark at the end of that section for an alternative approach). See \Cref{fig:pants-lamination-direction} for an illustration of this behavior.

\begin{figure}[htb!]
    \centering
    \includegraphics[width = 2in]{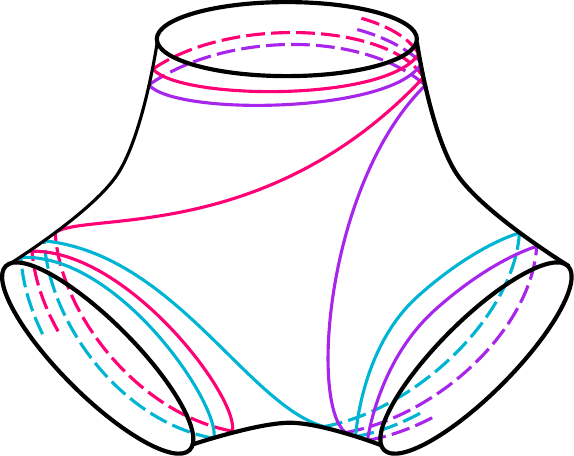}
    \caption{An ideal triangulation of a pair of pants. The three biinfinite geodesics shown are examples of the isolated leaves found in a pants--lamination.
    }    
    \label{fig:ideal-triangulation-pants}
\end{figure}

\begin{figure}[htb!]
    \centering
    \includegraphics[width = 2in]{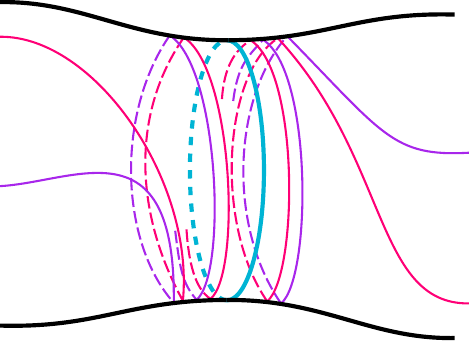}
    \caption{The blue pants curve in the middle has leaves of the pants-lamination spiraling in towards it ``to the right" from both sides or ``to the left" from both sides.}    
    \label{fig:pants-lamination-direction}
\end{figure}

Let $M$ be a hyperbolic 3-manifold.  A \textit{pleated surface} is an arc-length preserving map $\varphi: (X, \sigma) \to M$ from a surface $X$ with a complete hyperbolic metric $\sigma$ such that: (a) $\varphi$ maps leaves of some $\sigma$--geodesic lamination $\lambda \subset X$ to geodesics, and (b) $\varphi$ is totally geodesic in the complement of $\lambda$. We say that $\lambda$ is the \textit{pleating locus} of $\varphi$ if it is the smallest lamination satisfying conditions (a) and (b). See \cite[Section 5]{CanaryEpsteinGreen} for more details.

We introduce the following definition as an adaption of pleated surfaces to our infinite-type setting. A {\em well-pleated surface adapted to the core $Y$} in $\overline M_f$ is a pleated surface $\varphi \colon (S,\sigma) \to \overline M_f$, homotopic to the inclusion of $S$ into $\overline M_f$, such that
\begin{enumerate}
    \item \label{It:pants} the pleating locus is contained in a pants--lamination, and
    \item \label{It:core} $\varphi(S-Y) \subset \partial \overline M_f$.
\end{enumerate}
Observe that ``the inclusion" of $S$ into $\overline M_f$ is really only well-defined up to precomposing with powers of $f$.  We will later avoid this issue by passing to the cover $\widetilde M_\infty \subset S \times [-\infty,\infty]$, where this ambiguity disappears.  We also note that the metric $\sigma$ on $S$ is compatible with $Y$ and induced by the metric on $\widetilde M_\infty$.

One may follow \cite[Theorem~5.3.6]{CanaryEpsteinGreen} to construct a well-pleated surface representing the inclusion, adapted to any given core $Y$.  For this, construct a homotopy $h_t \colon S \to \overline M_f$ as in \Cref{S:viscera defined}.  We replace $h_1$ with a homotopic map $\varphi' \colon S \to \overline M_f$ so that $\varphi'$ sends each component of $\partial Y$ to a closed geodesic in $\partial \overline M_f$.  Since $h_1$ was a local homeomorphism from the neighborhoods of the ends $S-Y$ onto $\partial \overline M_f$, we can assume that $\varphi'$ is as well. We choose a hyperbolic structure on $S-Y$ so that $\varphi'|_{S-Y}$ is a local isometry.  Now we choose any pants lamination $\lambda$ containing $\partial Y$, and observe that we are now reduced to constructing a pleated surface $Y \to \overline M_f$ homotopic to $\varphi'|_{\partial Y}$, rel $\partial Y$, realizing the finite lamination $\lambda \cap Y$.  We can view this restriction $\varphi'|_Y \colon (Y,\partial Y) \to (\overline M_f,\partial \overline M_f)$ as a pleated surface representative of the $Y$--viscera, $(Y_1,\partial Y_1) \subset (\overline M_f,\partial \overline M_f)$, realizing the finite lamination $h_1(\lambda \cap Y) \subset Y_1$.

We let 
\[ 
    \Omega(f,Y) = \{ \varphi \colon (S,\sigma) \to \overline M_f \mid \varphi \mbox{ is well-pleated adapted to } Y \}
\] 
denote the set of all well-pleated surfaces adapted to $Y$.  A well pleated surface adapted to {\em some} core will simply be called a well-pleated surface and we denote the set of all such by $\Omega(f)$.

In the proof of the following lemma, we will make use of the theorem of Basmajian \cite[Theorem~1.1]{Basmajian} that there is a constant $\beta > 0$, depending only on $\xi(f)$, so that the $\beta$--neighborhood of $\partial \overline M_f$ is a product, $\partial \overline M_f \times [0,\beta]$. The proof employs a standard area argument. See, for example, Canary's proof of the Bounded Diameter Lemma \cite[Lemma~4.5]{Canary.CoveringTheorem}.

\begin{lemma} [Bounding the boundary] \label{L:bounding the boundary}
Suppose that $Y$ is a minimal core of the strongly irreducible, end-periodic homeomorphism $f \colon S \to S$ and suppose \\ ${(\varphi \colon (S,\sigma) \to \overline M_f) \in \Omega(f,Y)}$, then the total $\sigma$--length of the boundary of $\partial Y$ satisfies $\ell_\sigma(\partial Y) \leq \frac{2\pi |\chi(Y)|}{\beta}$.
\end{lemma}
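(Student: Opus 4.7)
The plan is to combine Gauss--Bonnet on $(Y,\sigma)$ with a Fermi-coordinate area computation for the $\beta$-half-collar of $\partial Y$, and to establish embeddedness of this collar using \Cref{P:minimal core bdy-inc} (which applies because $Y$ is a minimal core) together with the product structure of $N_\beta(\partial \overline M_f)$.  Since $\varphi$ is well-pleated adapted to $Y$, its pleating locus sits in a pants-lamination containing $\partial Y$, so each component of $\partial Y$ is a $\sigma$-geodesic, and Gauss--Bonnet gives $\area_\sigma(Y) = 2\pi|\chi(Y)|$.  Granted that the $\beta$-half-collar of $\partial Y$ embeds as a disjoint union of Fermi annuli (one per component), its area is at least $\ell_\sigma(\partial Y)\sinh(\beta) \geq \ell_\sigma(\partial Y)\beta$, and comparing with $\area_\sigma(Y)$ yields the desired bound.

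The heart of the argument is therefore to show that the $\beta$-half-collar is embedded, which I will prove by contradiction.  If it fails to embed, then two perpendicular geodesic rays from $\partial Y$, each of length less than $\beta$, must meet at an interior point $y \in Y$; their concatenation is a piecewise-geodesic arc $\alpha \subset Y$ of length less than $2\beta$ whose endpoints meet $\partial Y$ perpendicularly.  A Gauss--Bonnet calculation on any bigon that $\alpha$ would cobound with a subarc of $\partial Y$ would require negative area (since the two corner angles on $\partial Y$ are each $\pi/2$), forcing $\alpha$ to be essential in $Y$.  The image $\varphi(\alpha) \subset \overline M_f$ is then a path of length less than $2\beta$ whose endpoints lie on $\partial \overline M_f$, so by the triangle inequality it stays inside $N_\beta(\partial \overline M_f) = \partial \overline M_f \times [0,\beta]$ (here using Basmajian); concatenating $\varphi(\alpha)$ with its nearest-point projection to $\partial \overline M_f$ produces a null-homotopic loop in this product neighborhood that bounds a disk $D \subset N_\beta$.

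The main obstacle I anticipate is checking that $D$ is a genuine boundary-compressing disk for $Y_1$, since $\varphi$ need not be an embedding globally and $\alpha$ is only piecewise geodesic.  Concretely, one should first adjust $\varphi$ by a small homotopy supported near $\partial Y$ so that it agrees with the embedding $h_1$ defining $Y_1$, straightening $\alpha$ against the flow structure, and then verify that after this adjustment the resulting disk is embedded, meets $Y_1$ in the essential arc corresponding to $\alpha$, and has interior disjoint from $Y_1 \cup \partial \overline M_f$.  Once this is established, \Cref{P:minimal core bdy-inc} produces the desired contradiction, so the collar is embedded and the length bound follows.
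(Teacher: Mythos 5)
Your overall strategy matches the paper's: use Gauss--Bonnet to bound $\area_\sigma(Y) = 2\pi|\chi(Y)|$, compare with the area of a collar of $\partial Y$, and show that a failure of the required length bound produces a short essential arc in $Y$ mapping into Basmajian's product collar $N_\beta(\partial \overline M_f)$, whence $Y_1$ is $\partial$-compressible, contradicting \Cref{P:minimal core bdy-inc}.  The superficial difference (you prove the $\beta$-half-collar embeds; the paper takes the maximal embedded half-collar width $\epsilon$, shows $\epsilon\,\ell_\sigma(\partial Y) < 2\pi|\chi(Y)|$, and derives $\epsilon < \beta$ from a hypothetical failure of the bound) is cosmetic.

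The genuine gap is exactly the step you flag as ``the main obstacle,'' and your proposed fix does not close it.  Having a short essential arc $\alpha \subset Y$ with $\varphi(\alpha) \subset N_\beta(\partial \overline M_f)$ gives you a \emph{singular} disk in $N_\beta$ with boundary on $\varphi(\alpha)$ and $\partial \overline M_f$, but a boundary-compressing disk for the \emph{embedded} surface $Y_1$ must itself be embedded, must meet $Y_1$ exactly in an essential arc, and must have interior disjoint from $Y_1 \cup \partial \overline M_f$.  None of these properties follows from the construction you describe: $\varphi$ is a pleated map, not an embedding, so $\varphi(\alpha)$ need not even lie in $Y_1$, and even after adjusting $\varphi$ near $\partial Y$ to agree with the embedding $h_1$, the singular disk can intersect $Y_1$ in an a priori arbitrarily complicated way.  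Promoting the singular compression to an embedded one is a loop-theorem/Dehn's-lemma-type problem in the relative (boundary-compression) setting, and it is nontrivial.  The paper resolves it not by a direct construction but by invoking \cite[Theorem~2.2]{AitchisonRubinstein1993}: since $\varphi|_Y$ is a pleated-surface representative of the incompressible surface $Y_1$, the existence of an essential arc of $Y_1$ properly homotopic into $\partial \overline M_f$ \emph{implies} boundary-compressibility.  So the fix is to appeal to that theorem (or an equivalent least-area/pleated surface result) rather than trying to build the disk by hand; as written, your sketch leaves the crucial embeddedness and disjointness verifications unproved, and they are precisely the content of the cited theorem.
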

\begin{proof}
Take $\epsilon >0$ maximal such that the open $\epsilon$--neighborhood (with respect to the metric $\sigma$) of $\partial Y$ are annuli. 
The boundary of this neighborhood meets itself at some point, producing an essential arc of length $2\epsilon$ in $Y$.  
The area of the $\epsilon$--neighborhood is no more than the total area of $(Y,\sigma)$, which is $-2 \pi \chi(Y) = 2 \pi |\chi(Y)|$ by the Gauss--Bonnet formula. 
On the other hand, the area of this $\epsilon$--neighborhood is $ \sinh(\epsilon) \ell_\sigma(\partial Y) > \epsilon \ell_\sigma(\partial Y)$, and so
\[ \epsilon \ell_\sigma(\partial Y)  < 2 \pi |\chi(Y)|.  
\]

If $\ell_\sigma(\partial Y) > \frac{2\pi |\chi(Y)|}{\beta}$, then $\epsilon < \beta$.  In this case, the closure of the $\epsilon$--neighborhood of $\partial Y$ would map into the $\beta$--neighborhood of $\partial \overline M_f$.  It follows that $\alpha$ is mapped into the collar neighborhood of $\partial \overline M_f$, and is hence properly homotopic into $\partial \overline M_f$.  But $\varphi|_Y$ is a pleated surface representative of $Y_1$, and thus \cite[Theorem~2.2]{AitchisonRubinstein1993} implies that $Y_1$ is boundary-compressible.  This contradicts \Cref{P:minimal core bdy-inc}, since $Y$ was a minimal core.  Therefore, $\ell_\sigma(\partial Y) \leq \frac{2\pi |\chi(Y)|}{\beta}$, as required. 
\end{proof}

A {\em minimally well-pleated surface} is any well-pleated surface $\varphi \in \Omega(f,Y)$ for which $Y$ is a minimal core and where the pleating locus contains $\partial Y$.

\begin{corollary} \label{C:boundary length bound}
If $\varphi \in \Omega(f)$ is minimally well-pleated and $\beta(\xi(f))>0$ is Basmajian's constant, then
\[ \ell_\sigma(\partial \Sigma_\varphi) \leq \frac{2\pi |\chi(f)|}{\beta(\xi(f))}.\]
\end{corollary}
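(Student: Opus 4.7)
The plan is to observe that the corollary is essentially an immediate consequence of the preceding lemma (the ``Bounding the boundary'' lemma), combined with the definitions of ``minimally well-pleated'' and ``minimal core.'' There is no new mathematical content; the only work is to match up the notation and the constants.

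First, I would unpack the hypothesis. By definition, $\varphi \in \Omega(f)$ being minimally well-pleated means there exists a minimal core $Y \subset S$ for $f$ such that $\varphi \in \Omega(f,Y)$, and $\Sigma_\varphi$ refers to this underlying core $Y$ (so $\partial \Sigma_\varphi = \partial Y$). By definition of a minimal core, $\chi(Y) = \chi(f)$, and hence $|\chi(Y)| = |\chi(f)|$.

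Second, I would invoke the preceding lemma directly. That lemma asserts $\ell_\sigma(\partial Y) \leq \tfrac{2\pi |\chi(Y)|}{\beta}$, where $\beta$ is Basmajian's constant applied to $\partial \overline M_f$; as recalled in the paragraph preceding the lemma, this constant depends only on $\xi(\partial \overline M_f) = \xi(f)$, so it is precisely $\beta(\xi(f))$. Substituting $|\chi(Y)| = |\chi(f)|$ yields
\[
\ell_\sigma(\partial \Sigma_\varphi) \;=\; \ell_\sigma(\partial Y) \;\leq\; \frac{2\pi |\chi(Y)|}{\beta(\xi(f))} \;=\; \frac{2\pi |\chi(f)|}{\beta(\xi(f))},
\]
which is the claimed bound.

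There is no real obstacle here; the corollary exists to package the lemma in a form where the right-hand side depends only on the capacity $(\chi(f),\xi(f))$ of $f$, rather than on a particular choice of core. The only subtlety to be careful about is notational: ensuring that ``minimally well-pleated'' is interpreted as ``well-pleated adapted to some minimal core'' (as stated in the sentence introducing the term) and that $\Sigma_\varphi$ is understood as the corresponding core, so that the lemma applies verbatim.
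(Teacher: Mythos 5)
Your proof is correct and takes essentially the same route the paper intends: the corollary is just \Cref{L:bounding the boundary} specialized to a minimal core $Y$ (so $|\chi(Y)| = |\chi(f)|$), with $\Sigma_\varphi$ read as the core $Y$ to which $\varphi$ is adapted and $\beta$ identified as Basmajian's constant depending only on $\xi(\partial\overline M_f) = \xi(f)$. Your careful unpacking of the slightly loose notation $\Sigma_\varphi$ is the right reading, as confirmed by how the corollary is invoked in the proof of \Cref{L:bounded pants decomp}.
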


\section{Interpolation and pants}
\label{section.interpolation}
In this section, we describe how to produce paths of pants decompositions that we will use to prove the required lower bounds on volume.  In what follows, we typically assume $Y$ is a minimal core for a strongly irreducible end-periodic $f$, so that $\chi(Y) = \chi(f)$. 

\subsection{Pants and cores} Suppose $(\varphi \colon (S,\sigma) \to \overline M_f) \in \Omega(f,Y)$ is any minimally well-pleated surface (\textit{i.e.}~$Y$ is a minimal core), and let $U_+ \cup U_- = S-Y$ be the associated tight nesting neighborhoods.  Applying an isotopy if necessary, we assume that for any component $\alpha$ of $\partial_- Y$, $f^{-1}(\alpha) $ is either an essential curve of $U_-$ or else a different component of $\partial_-Y$. For example, this can be arranged without affecting previous assumptions if the isotopy class of $Y$ is realized with geodesic boundary.

We also set
\[ \Delta_+ = \overline{U_+ - f(U_+)},\]
as in \Cref{S:complexities}, which is a fundamental domain for the action of $f$ on $\mathcal U_+$. Note that $\Delta_+$ is the subsurface bounded by (some components of) $\partial_+ Y$ and $f(\partial_+ Y)$, by \Cref{L:intution matched}, since $Y$ is balanced by minimality.

Now we define
\[ 
    \Sigma  = Y  \cup \Delta_+,
\]
(see \Cref{fig:defining-sigma-phi-k}) and observe that from the discussion in \Cref{S:complexities} we have
\begin{equation} \label{Eq:ComplexityBound}
    \xi(\Delta_+) \leq \tfrac32 |\chi(\Delta_+)| = \tfrac34|\chi(\partial \overline M_{f})| = \tfrac{1}{2} \xi(f).
\end{equation}

\begin{figure}[htb!]
    \def\svgwidth{3.5in}
\begingroup%
  \makeatletter%
  \providecommand\color[2][]{%
    \errmessage{(Inkscape) Color is used for the text in Inkscape, but the package 'color.sty' is not loaded}%
    \renewcommand\color[2][]{}%
  }%
  \providecommand\transparent[1]{%
    \errmessage{(Inkscape) Transparency is used (non-zero) for the text in Inkscape, but the package 'transparent.sty' is not loaded}%
    \renewcommand\transparent[1]{}%
  }%
  \providecommand\rotatebox[2]{#2}%
  \newcommand*\fsize{\dimexpr\f@size pt\relax}%
  \newcommand*\lineheight[1]{\fontsize{\fsize}{#1\fsize}\selectfont}%
  \ifx\svgwidth\undefined%
    \setlength{\unitlength}{898.43189526bp}%
    \ifx\svgscale\undefined%
      \relax%
    \else%
      \setlength{\unitlength}{\unitlength * \real{\svgscale}}%
    \fi%
  \else%
    \setlength{\unitlength}{\svgwidth}%
  \fi%
  \global\let\svgwidth\undefined%
  \global\let\svgscale\undefined%
  \makeatother%
  \begin{picture}(1,0.48162045)%
    \lineheight{1}%
    \setlength\tabcolsep{0pt}%
    \put(0,0){\includegraphics[width=\unitlength,page=1]{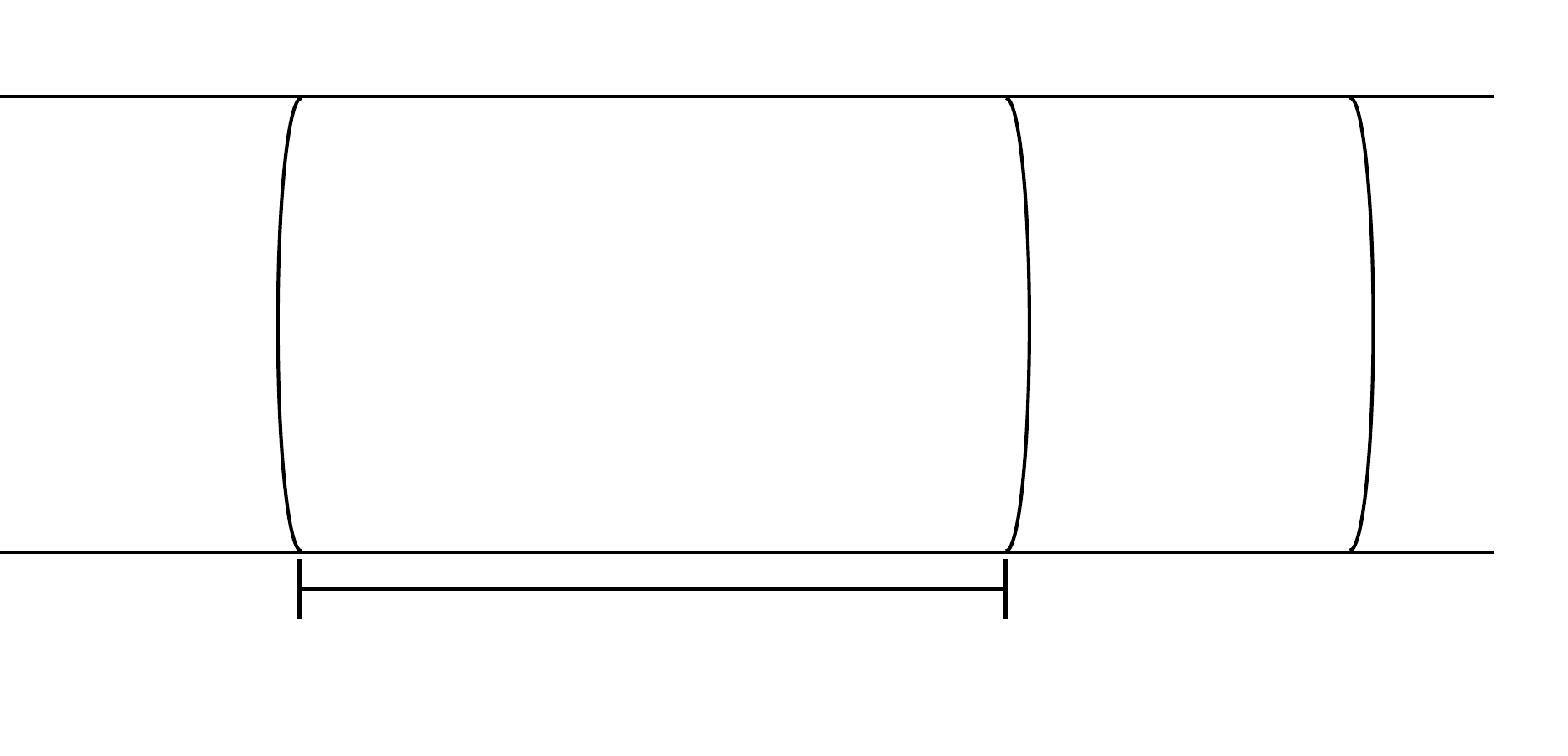}}%
    \put(0.41575877,0.06280457){\color[rgb]{0,0,0}\makebox(0,0)[t]{\lineheight{1.25}\smash{\begin{tabular}[t]{c}$Y$\end{tabular}}}}%
    \put(0,0){\includegraphics[width=\unitlength,page=2]{defining-sigma-phi-k.pdf}}%
    \put(0.75109537,0.45592472){\color[rgb]{0,0,0}\makebox(0,0)[t]{\lineheight{1.25}\smash{\begin{tabular}[t]{c}$\Delta_+$\end{tabular}}}}%
    \put(0,0){\includegraphics[width=\unitlength,page=3]{defining-sigma-phi-k.pdf}}%
    \put(0.52882339,0.00689676){\color[rgb]{0,0,0}\makebox(0,0)[t]{\lineheight{1.25}\smash{\begin{tabular}[t]{c}$\Sigma$\end{tabular}}}}%
    \put(-0.00252718,0.26493239){\color[rgb]{0,0,0}\makebox(0,0)[lt]{\lineheight{1.25}\smash{\begin{tabular}[t]{l}$U_-$\end{tabular}}}}%
    \put(1.00234783,0.26493239){\color[rgb]{0,0,0}\makebox(0,0)[rt]{\lineheight{1.25}\smash{\begin{tabular}[t]{r}$U_+$\end{tabular}}}}%
  \end{picture}%
\endgroup%

    \caption{A schematic of the surface $S$ with $Y$, $\Delta_+$, and $\Sigma$ labelled. Here $\Delta_+$ is a fundamental domain for the action of $f$ on $\mathcal U_+$, and $\Sigma$ is the union of the minimal core, $Y$, with $\Delta_+$.} 
    \label{fig:defining-sigma-phi-k}
\end{figure}

\noindent We are now ready to construct a bounded-length, $f$-asymptotic pants decomposition of $S$.

\begin{lemma} \label{L:bounded pants decomp} There exists a constant $L = L(\chi(f),\xi(f))$ so that for any minimally well-pleated surface ${(\varphi \colon (S,\sigma) \to \overline M_f) \in \Omega(f, Y)}$, there exists a pants decomposition $P$ of $S$ such that
\begin{enumerate}
    \item \label{It:extending boundary} $\partial \Sigma \subset P$;
    \item \label{It:move support} $P$ and $f(P)$ differ only on $\Sigma$; and
    \item \label{It:bounded length} each curve in $P$ has $\sigma$--length at most $L$.
\end{enumerate}

Furthermore, since $P$ contains $\partial Y$, there exists a well-pleated surface that realizes a pants lamination containing $P$.
\end{lemma}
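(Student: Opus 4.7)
The plan is to assemble $P$ from three bounded-length pants decompositions: one on the minimal core $Y$, one on the fundamental domain $\Delta_+$ for $\langle f\rangle$ acting on $\mathcal U_+$, and one on the fundamental domain $\Delta_- = \overline{U_- - f^{-1}(U_-)}$ for $\langle f\rangle$ acting on $\mathcal U_-$, each extending its geodesic boundary; then I extend equivariantly across $U_\pm$ by $\langle f\rangle$--translates, which preserve $\sigma$--length since $f|_{U_+}$ and $f^{-1}|_{U_-}$ are isometric embeddings.

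First, I verify that the boundaries $\partial Y$ and $\partial \Delta_\pm$ are $\sigma$--geodesic and short, and that the areas of $Y$ and $\Delta_\pm$ are bounded in terms of the capacity. Because $\varphi$ is pleated with pleating locus contained in a pants--lamination that contains $\partial Y$, the components of $\partial Y$ are $\sigma$--geodesics. Applying $f^{\pm 1}$, which is a $\sigma$--isometry on $U_\pm$, the multicurve $\partial \Delta_\pm = \partial_\pm Y \cup f^{\pm 1}(\partial_\pm Y)$ is likewise $\sigma$--geodesic, with $\ell_\sigma(\partial \Delta_\pm) \leq 2\ell_\sigma(\partial Y)$. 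By \Cref{C:boundary length bound}, $\ell_\sigma(\partial Y)$ is controlled by a constant depending only on $(\chi(f),\xi(f))$. Gauss--Bonnet gives $\area(Y) = 2\pi|\chi(f)|$, and the formula $|\chi(\Delta_\pm)| = \tfrac{1}{3}\xi(f)$ from \Cref{S:complexities} gives $\area(\Delta_\pm) = \tfrac{2\pi}{3}\xi(f)$; both are bounded by the capacity.

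I then apply \Cref{thm:relative-bers} to the three compact hyperbolic surfaces with geodesic boundary $(Y, \sigma|_Y)$, $(\Delta_+, \sigma|_{\Delta_+})$, and $(\Delta_-, \sigma|_{\Delta_-})$ to obtain pants decompositions $P_Y$, $P_{\Delta_+}$, $P_{\Delta_-}$ extending their respective boundaries, with all curves of $\sigma$--length at most some $L = L(\chi(f),\xi(f))$. Define
\[ P = P_Y \cup \bigsqcup_{n \geq 0} f^n(P_{\Delta_+}) \cup \bigsqcup_{n \geq 0} f^{-n}(P_{\Delta_-}). \]
Since $U_+ = \bigsqcup_{n \geq 0} f^n(\Delta_+)$ and $U_- = \bigsqcup_{n \geq 0} f^{-n}(\Delta_-)$ up to boundary, each translate $f^n(P_{\Delta_\pm})$ is a pants decomposition of the corresponding tile, adjacent tiles share only their common geodesic boundary curves, and the pieces glue with $P_Y$ along $\partial_\pm Y$, so $P$ is a pants decomposition of $S$. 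Condition~(3) holds because $f^n$ preserves $\sigma$--length on $U_\pm$, and condition~(1) is immediate since $\partial \Sigma = \partial_- Y \cup f(\partial_+ Y) \subset P_Y \cup P_{\Delta_+}$.

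It remains to verify condition~(2). Using $f^{-1}(U_-) = \bigsqcup_{n \geq 1} f^{-n}(\Delta_-)$ and $f(U_+) = \bigsqcup_{n \geq 1} f^n(\Delta_+)$ up to boundary, I compute
\[ f(P) \cap U_- = f(P \cap f^{-1}(U_-)) = f\Big(\bigsqcup_{n \geq 1} f^{-n}(P_{\Delta_-})\Big) = \bigsqcup_{n \geq 0} f^{-n}(P_{\Delta_-}) = P \cap U_-, \]
and symmetrically $f(P) \cap f(U_+) = P \cap f(U_+)$. Since $S - \Sigma = U_- \cup f(U_+)$, condition~(2) follows. The technical weight of the argument has already been carried by \Cref{C:boundary length bound} and by the topological setup in \Cref{section.complexity}; given these, the main points to be careful about are that the three Bers inputs---$\ell_\sigma(\partial Y)$, $\ell_\sigma(\partial \Delta_\pm)$, and the areas---are all controlled by the capacity, and that the $f$--translates truly assemble into a genuine pants decomposition across $U_\pm$. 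I expect the latter bookkeeping, together with unpacking the shift identity in (2), to be the main subtlety.
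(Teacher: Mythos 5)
Your proof is correct and follows essentially the same route as the paper's: bound $\ell_\sigma(\partial Y)$ via \Cref{C:boundary length bound}, transport that bound to $\partial\Delta_\pm$ using the $\sigma$--isometric action of $f^{\pm 1}$ on $U_\pm$, apply \Cref{thm:relative-bers} to $Y$ and to the fundamental domains $\Delta_\pm$, and then extend $\langle f\rangle$--equivariantly across $U_\pm$. You are somewhat more explicit than the paper in spelling out the $\Delta_-$ side, the area input to Parlier's theorem via Gauss--Bonnet, and the shift computation behind condition~(2), but these are elaborations of the same argument rather than a different one.
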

\begin{proof} Fix $(\varphi \colon (S,\sigma) \to \overline M_f) \in \Omega(f,Y)$ minimally well-pleated and adapted to a core $Y$, and continue with the notation as above, so that $U_\pm$ are the tight nesting neighborhoods defined by $Y$.
Since $\varphi(U_\pm)$ is entirely contained in $\partial \overline M_f$, it follows that, with respect to the metric $\sigma$, after an isotopy we can assume that $f$ isometrically maps $U_+$ into itself and $f^{-1}$ isometrically maps $U_-$ into itself (ensuring that $\sigma$ and $f$ satisfy prior assumptions).

We now construct the desired pants decomposition $P$ of $S$. We start with the curves $\partial Y$. Note that $|\chi(Y)|= |\chi(f)|$ and
\[ \ell_\sigma(\partial Y) \leq \frac{2\pi |\chi(Y)|}{\beta(\xi(f))} \leq \frac{2\pi |\chi(f)|}{\beta(\xi(f))},\]
by \Cref{C:boundary length bound}.
Since $\xi(Y) \leq \frac{3}2 |\chi(Y)|$,  \Cref{thm:relative-bers} then guarantees that we can choose a pants decomposition of $Y$ so that each pants curve has length bounded by some $L_0 = L_0(\chi(f),\xi(f))$. We require $P$ to contain this pants decomposition of $Y$ (including the boundary curves $\partial Y$).

From \Cref{Eq:ComplexityBound}, $\xi(\Delta_-) \leq \frac12\xi(f)$. Since $\ell_\sigma(\partial Y)$ is uniformly bounded and since $f$ isometrically maps $U_+$ into itself, we see that there is also a uniform bound on the length of $\partial \Delta_+$ in terms of $\chi(f)$ and $\xi(f)$.
So again applying \Cref{thm:relative-bers}, there is a pants decomposition $P_+$ of $\Delta_+$ (including the boundary curves, $\partial \Delta_+$) such that each pants curve has length bounded by some $L_1 = L_1(\chi(f), \xi(f))$.  Then, since
\[ 
    U_+ = \bigcup_{j=0}^\infty f^{j} (\Delta_+) 
\]
with any two distinct translates $f^{j}\Delta_+$ and $f^{j'}\Delta_+$ intersecting at most in their boundary curves, we can extend $P$ over $U_+$ so that
\[ 
    U_+ \cap P = \bigcup_{j=0}^\infty f^{j} (P_+),
\]
We similarly construct $P$ on $U_-$ with each curve's length bounded by some constant $L_2 = L_2(\chi(f), \xi(f))$.

Now set $L = \max\{L_0,L_1,L_2\}$.  The lemma follows by construction: \Cref{It:bounded length} is by definition, and \Cref{It:extending boundary} and \Cref{It:move support} are a consequence of the fact that $\Sigma = Y\cup \Delta_+$. 
\end{proof}

For the remainder of this section, fix $(\varphi \colon (S,\sigma) \to \overline M_f) \in \Omega(f, Y)$ with $\chi(Y) = \chi(f)$ and $L = L(\chi(f), \xi(f))$, and $P$ as in \Cref{L:bounded pants decomp}. Next, fix the lift $\widetilde \varphi \colon S \to \widetilde M_\infty \subset S \times [-\infty,\infty]$ of $\varphi$ so that $\widetilde \varphi$ is homotopic to the identity after projecting onto the first factor.

Composing with the covering transformation $F$ on $\widetilde M_{\infty}$, we get $F \circ \widetilde \varphi \colon S \to \widetilde M_\infty$, which is another lift of $\varphi$, but rather than being homotopic to the identity on $S$ (after projecting onto the first factor) it is homotopic to $f$, since $F(x,t) = (f(x),t-1)$.  Therefore,
\[ F \circ \widetilde \varphi \circ f^{-1} \colon S \to \widetilde M_\infty\] 
is homotopic to the identity (again, after projecting to the first factor).

Pulling back the metric by $f^{-1}$ gives a new (lift of a) well-pleated surface
\[ 
    F \circ \widetilde \varphi \circ f^{-1} \colon (S,(f^{-1})^*\sigma) \to \widetilde M_\infty.
\]
Since $P$ has length at most $L$ with respect to $\sigma$, $f(P)$ has length at most $L$ with respect to $(f^{-1})^*\sigma$.

By \Cref{L:bounded pants decomp}, $P$ and $f(P)$ differ only in $\Sigma$, and we define
    \[ P_{\alpha} = P \cap \Sigma \quad \mbox{ and } \quad P_{\omega} = f(P) \cap \Sigma.\]
These are pants decompositions of $\Sigma$ such that
\begin{equation} \label{E:reduction to finite type pants}
    d_{\mathcal P(S)}(P,f(P)) \leq d_{\mathcal P(\Sigma)}(P_{\alpha},P_{\omega}),
\end{equation} 
where
\begin{equation} \label{Eq:Big Sigma Euler}
    |\chi(\Sigma)| =   |\chi(\Delta_-)| + |\chi(Y)| = \tfrac13 \xi(f) + |\chi(f)| < \xi(f) + |\chi(f)| ,
\end{equation} 
and each curve of $\partial \Sigma$ has length at most $L$.

To simplify the notation, we write $\phi_{\alpha} \colon \Sigma \to \widetilde M_\infty$ to denote the restriction of $\widetilde \varphi$ to $\Sigma$ and $\phi_{\omega} \colon \Sigma \to \widetilde M_\infty$ to denote the restriction of $F \circ \widetilde \varphi \circ f^{-1}$ to $\Sigma$. As the notation suggests, $\phi_{\alpha}$ maps the curves of $P_{\alpha}$ to geodesics of length at most $L$ and $\phi_{\omega}$ maps the curves of $P_{\omega}$ to geodesics of length at most $L$.  We write $\sigma_{\alpha}$ and $\sigma_{\omega}$ to denote the hyperbolic structures so that $\phi_{\alpha} \colon (\Sigma,\sigma_{\alpha}) \to \widetilde M_\infty$ and $\phi_{\omega} \colon (\Sigma,\sigma_{\omega}) \to \widetilde M_\infty$ are pleated surfaces (representing the $\Sigma$--viscera).  Since both $\phi_{\alpha}$ and $\phi_{\omega}$ map $\partial \Sigma$ to geodesics, by precomposing one of these with an isotopy of the identity, we assume (as we may) $\phi_{\alpha}$ and $\phi_{\omega}$ agree on the boundary and that they are homotopic by a homotopy that is stationary on $\partial \Sigma$.

\subsection{Simplicial hyperbolic surfaces}

\label{sub:simp-hyp-surfaces}

To produce continuous families of ``good" representatives of a homotopy class of $\phi_{\alpha} \colon \Sigma \to \widetilde M_\infty$ (and hence of $\phi_{\omega}$) we use simplicial hyperbolic surfaces, following \cite{Canary.CoveringTheorem}. 

We fix once and for all $k_0 = |\partial \Sigma|$ points, one on each component of $\partial \Sigma$. Let $k \geq k_0$. A \textit{$k$--simplicial pre-hyperbolic surface in $\widetilde M_\infty$} is a map $\eta \colon \Sigma \to \widetilde M_\infty$ that satisfies the following:
\begin{itemize}
    \item There is a triangulation\footnote{This is not a triangulation in the classical sense, but rather a $\Delta$--complex structure in the sense of \cite{Hatcher.2002}} ${\mathcal T}$ of $\Sigma$ with $k$ vertices, exactly one on each boundary component at the fixed $k_0$ points, such that $\eta$ takes each triangle to a non-degenerate totally geodesic triangle in $\widetilde M_\infty$.
    \item The restriction of $\eta$ to each component of $\partial \Sigma$ is a closed geodesic.
    \item The map $\eta$ is homotopic to $\phi_{\alpha}$ through maps that are stationary on $\partial \Sigma$.
\end{itemize}
Note that, for such a surface, each component of $\partial \Sigma$ is parameterized by a single edge of the triangulation under $\eta$.
Furthermore, the hyperbolic metrics on the triangles induce a singular hyperbolic metric $\sigma_\eta$ on $\Sigma$. In this metric, the boundary is a smooth geodesic, except possibly at the vertices.
The $k$--simplicial pre-hyperbolic surface is a $k$--simplicial \textbf{\textit{hyperbolic}} surface if the cone angles in the interior are all at least $2\pi$, and those on the boundary are at least $\pi$.
In particular note that a $k_0$--simplicial pre-hyperbolic surface is automatically hyperbolic.
The set of all $k$--simplicial hyperbolic surfaces is denoted $\SH_k$, and the set of all simplicial hyperbolic surfaces by $\SH = \cup_k \SH_k$, and we equip both with the compact--open topology.
We will be primarily interested in $k$--simplicial hyperbolic surfaces when $k = k_0$ or $k=k_0+1$.

The universal cover $\widetilde M$ of $\widetilde M_\infty$ may be identified with a convex subset of $\mathbb H^3$ whose frontier is a union of totally geodesic hyperbolic planes. This allows the identification of $\overline{M}_f$ as the quotient of such a subset. We choose such an identification, as well as an equivariant lift $\widetilde \Sigma \to \widetilde M$ of $\phi_{\alpha}$, which also gives us an equivariant lift $\widetilde \eta$ of any $k$--simplicial hyperbolic surface $\eta$ by lifting the homotopy to $\phi_{\alpha}$. 

We let $\SH({\mathcal T})\subset \SH$ be the subspace consisting of all simplicial hyperbolic surfaces whose underlying triangulation is ${\mathcal T}$.

\begin{lemma} \label{L:realize simplicial}
For any triangulation ${\mathcal T}$ of $\Sigma$ with $k_0$ vertices, one vertex at each of the fixed points on $\partial \Sigma$, the space $\SH({\mathcal T})$ is non-empty and any two elements of $\SH({\mathcal T})$ differ by precomposing with a homeomorphism isotopic to the identity by an isotopy that is stationary on the boundary.
\end{lemma}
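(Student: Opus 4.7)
The plan is to construct a simplicial hyperbolic surface in $\SH(\mathcal T)$ by straightening the reference pleated surface $\phi_\alpha$ along $\mathcal T$ in the universal cover $\widetilde M$ of $\widetilde M_\infty$, and to prove uniqueness by comparing two such straightenings.

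For existence, I will fix an equivariant lift $\widetilde\phi_\alpha \colon \widetilde\Sigma \to \widetilde M$ and an equivariant lift $\widetilde{\mathcal T}$ of the triangulation. I then define $\widetilde\eta$ by setting $\widetilde\eta(v) = \widetilde\phi_\alpha(v)$ on each vertex, sending each $1$--simplex to the unique geodesic arc in $\widetilde M \subset \mathbb H^3$ between the images of its endpoints, and sending each $2$--simplex to the totally geodesic triangle spanned. Equivariance descends this construction to $\eta \colon \Sigma \to \widetilde M_\infty$. The straight-line homotopy from $\widetilde\phi_\alpha$ to $\widetilde\eta$, performed simplex by simplex, produces the required homotopy to $\phi_\alpha$; it is stationary on $\partial\Sigma$ because $\widetilde\eta$ agrees with $\widetilde\phi_\alpha$ on vertex lifts and both parameterize each boundary component as the same closed geodesic on the totally geodesic $\partial \overline M_f$. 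Non-degeneracy of the geodesic triangles will follow from $\pi_1$--injectivity of $\phi_\alpha$ together with the distinctness of the three vertex lifts of each $2$--simplex in $\widetilde\Sigma$.

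For uniqueness, given $\eta_1,\eta_2 \in \SH(\mathcal T)$, I will choose equivariant lifts $\widetilde\eta_i$ by lifting the stationary-on-$\partial\Sigma$ homotopies to $\widetilde\phi_\alpha$, so that $\widetilde\eta_i = \widetilde\phi_\alpha$ on every vertex of $\widetilde{\mathcal T}$. On each $1$--simplex both $\widetilde\eta_i$ trace the unique geodesic arc in $\widetilde M$ between the same pair of endpoints by arc-length in the induced singular metric, hence they agree pointwise. On each $2$--simplex $T$ both are homeomorphisms onto the same totally geodesic triangle, so $h|_T := (\widetilde\eta_2|_T)^{-1}\circ\widetilde\eta_1|_T$ is a self-homeomorphism of $T$ fixing $\partial T$. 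These pieces assemble into an equivariant self-homeomorphism that descends to $h \colon \Sigma \to \Sigma$ with $\eta_1 = \eta_2 \circ h$, and $h$ is the identity on the entire $1$--skeleton of $\mathcal T$. Applying the Alexander trick on each $2$--simplex yields an isotopy from $h$ to the identity that fixes the $1$--skeleton throughout, and is in particular stationary on $\partial\Sigma$.

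The main obstacle I foresee is verifying the cone angle condition at each boundary vertex in the existence part: one must show the sum of interior angles of the triangles meeting any boundary vertex is at least $\pi$. This is not automatic for an arbitrary $\mathcal T$ and will require a geometric argument exploiting the totally geodesic structure of $\partial \overline M_f$ and the interaction of the straightening with $\phi_\alpha$, possibly together with an auxiliary isotopy to rule out degenerate local configurations.
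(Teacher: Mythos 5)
Your overall strategy (straighten $\phi_\alpha$ along $\mathcal T$ in the universal cover to get existence; compare lifts simplex-by-simplex and apply the Alexander trick for uniqueness) matches the paper's. However, there are two genuine gaps in the existence half.

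First, your non-degeneracy argument does not work as stated. $\pi_1$--injectivity and the distinctness of the three vertex lifts of a 2--simplex $T$ rule out degenerate \emph{edges}, but three distinct points of $\mathbb H^3$ can perfectly well be collinear, and nothing you cite prevents the three vertex lifts from lying on a single geodesic. The paper's argument uses the special structure of $\widetilde M$: all three vertices lie on the frontier of $\widetilde M$, which is a convex subset of $\mathbb H^3$ bounded by totally geodesic planes, so a collinear configuration forces the containing geodesic segment to lie entirely in one of those boundary planes. Projecting to $\partial \widetilde M_\infty$ then yields a geodesic path inside a single component of $\mathcal U_\pm \times \{\pm\infty\}$; since it cannot lie entirely in $\eta(\partial\Sigma)$ (else all three edges of $T$ would be sent to the boundary class, which $\mathcal T$ forbids), a subsegment between two vertices enters $U_\pm \times \{\pm\infty\}$, and projecting $\widetilde M_\infty \to S$ exhibits an edge of $\mathcal T$ as an arc that can be homotoped out of $\Sigma$, contradicting that each edge is essential in $\Sigma$. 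You need something like this; $\pi_1$--injectivity alone does not get you there.

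Second, the cone angle condition at the boundary vertices is not actually an obstacle requiring an auxiliary isotopy---it is automatic, and is exactly the point the paper makes. At a boundary vertex $v$, the component of $\partial\Sigma$ through $v$ is sent by $\eta$ to a closed geodesic, so the two tangent directions along it at $\widetilde\eta(v)$ are antipodal in the unit tangent sphere $T^1_{\widetilde\eta(v)}(\mathbb H^3)$. The link of $v$ in $(\Sigma,\sigma_\eta)$ maps to a path in this sphere joining these antipodal points, and any such path has length at least $\pi$; this is precisely the cone angle bound. (All vertices of $\mathcal T$ lie on $\partial\Sigma$, so the interior condition is vacuous.) As a minor point in the uniqueness half, the definition of a simplicial hyperbolic surface does not constrain the parameterization of an edge, so two elements of $\SH(\mathcal T)$ need not literally agree on edges; they only agree up to a reparameterization fixing endpoints, which is all you need before invoking the Alexander trick.
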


\begin{proof} We will first show that $\SH(\mathcal{T})$ is nonempty by constructing an element $\eta: \Sigma \to \widetilde M_{\infty}$ of $\SH({\mathcal T})$ inductively over the skeleta.

We begin by declaring that $\eta$ agrees with $\phi_{\alpha}$ on the boundary of $\Sigma$, and hence the vertices of ${\mathcal T}$, which is required for any element of $\SH(\mathcal T)$.
Let $e$ be an edge of ${\mathcal T}$.
If the endpoints of $e$ are distinct, then, by lifting to $\mathbb H^3$ and using a straight--line homotopy, we may homotope $\phi_{\alpha}\big|_e$ relative to its endpoints to a geodesic segment.
If this is the case, we define $\eta$ to send $e$ to this geodesic segment which necessarily has positive length: the endpoints are either on distinct boundary components of $\partial \widetilde M_\infty$, or on disjoint closed geodesics in a single component of $\partial \widetilde M_\infty$. 
If $e$ is a loop, then it is homotopically nontrivial, and hence it is carried by $\phi_{\alpha}$ to an essential loop, and we homotope $\phi_\alpha$ by a straight--line homotopy again to $\eta$ mapping $e$ to the geodesic representative of the based homotopy class of loops (if the loop is a boundary component, it is already geodesic and $\eta$ and $\phi_\alpha$ agree there).
The boundary $\partial T$ of any triangle $T$ in ${\mathcal T}$ is null--homotopic in $\Sigma$, and so it also lifts to the universal cover where we can extend our straight line homotopy to a homotopy of $\phi_{\alpha}|_T$ to $\eta|_T$ mapping $T$ to a (possibly degenerate) geodesic triangle immersed in $\widetilde M_\infty$.  Having done this for every triangle $T$, we have defined $\eta: \Sigma \to \widetilde M_{\infty}$ and the homotopy from $\phi_\alpha$.

We claim that every geodesic triangle $\eta|_T$ is in fact non-degenerate.
This means that the lift $\widetilde \eta|_T$ to $\mathbb H^3$ is an embedding of a geodesic triangle.  
Suppose that this is not the case.  First, observe that every edge of $T$ in $\Sigma$ either connects distinct vertices or is a non-null homotopic loop, so the image of each edge is a non-degenerate segment.  Thus, the only degeneracy that may occur in $\widetilde \eta|_T$ is that all three vertices lie on a single geodesic segment.  All three of these points lie on the boundary of $\widetilde M$, which we recall is a convex subset of $\mathbb H^3$ bounded by hyperbolic planes.  Therefore, the entire segment must lie in the boundary.

Now, projecting the segment back to $\widetilde M_\infty$, we obtain a segment in $\partial \widetilde M_\infty$ passing through three vertices in $\eta(\partial \Sigma)$.  Note that the segment cannot be entirely contained in $\eta(\partial \Sigma)$ since then all three edges of $T$ are mapped to the homotopy class of the boundary loop, which is not allowed in the triangulation.  Therefore, the segment defines a geodesic path in a component of $\partial \widetilde M_\infty$, and hence in either $\mathcal U_+ \times \{\infty\}$ or $\mathcal U_- \times \{-\infty\}$.  Moreover, this segment meets the geodesic boundary $\eta(\partial \Sigma)$ transversely.  A subsegment of the path between two of the vertices enters one of $U_+ \times \{\infty\}$  or $U_- \times \{-\infty\}$ from the vertex.  Projecting $\widetilde M_\infty$ onto the first factor $S$, this subsegment projects to an essential path in $S$ which cannot be homotoped entirely contained in $\Sigma$.  This is impossible because the path is homotopic to an edge of the triangulation, and hence an essential arc in $\Sigma$.  Therefore, the triangle $T$ is non-degenerate.

The link of a vertex $v$ in ${\mathcal T}$ defines a path in the sphere $T^1_{\widetilde \eta(v)}(\mathbb H^3)$ joining antipodal points---namely, the two tangent vectors to the boundary geodesic---and thus has length at least $\pi$, proving that the cone angle at the vertices is at least $\pi$ (\textit{c.f.}~the NLSC [not locally strictly convex] property and \cite[Lemma~4.2]{Canary.CoveringTheorem}).  Therefore, after reparameterizing if necessary, we conclude that $\eta$ is the desired $k_0$--simplicial hyperbolic surface and so, $\SH(\mathcal T)$ is non-empty.

Now, given any $\eta,\eta' \in \SH(\mathcal T)$, the two maps are homotopic to $\phi_\alpha$ by a homotopy that is stationary on $\partial \Sigma$, they are also homotopic to each other by such a homotopy.  Since each edge $e$ of $\mathcal T$ must be sent to the geodesic in the relative homotopy class of $\phi_\alpha\big|_e$, it follows that $\eta$ and $\eta'$ differ on each edge by a reparameterization constant on the endpoints. Therefore, $\eta$ and $\eta'$ differ by reparameterization that maps simplicies to simplices.  Since each triangle maps to a non-degenerate triangle, the reparameterization is necessarily a homeomorphism preserving the triangulation which is the identity on the vertices.  It follows that this reparameterizing homeomorphism is isotopic to the identity rel the vertices, and thus $\eta$ and $\eta'$ differ by precomposing by a homeomorphism isotopic to the identity rel the vertices and boundary.
\end{proof}

For each component $\gamma \subset \partial \Sigma$, $\phi_\alpha(\gamma) = \phi_\omega(\gamma)$ is a closed geodesic in the totally geodesic boundary $\partial \widetilde M_\infty$ of length at most $L$. Consider the annular cover of the component of $\partial \widetilde M_\infty$ to which this curve lifts.  This annulus is divided by (the lift of the image of) $\gamma$ into two half-open annuli with boundary $\gamma$, and we let $\breve{\Sigma} \supset \Sigma$ be obtained by gluing one of these half-open annuli to each boundary component of $\Sigma$.  For any $\eta \colon (\Sigma,{\mathcal T}) \to \widetilde M_\infty \in \SH$, we have $\eta|_\gamma = \phi_\alpha|_\gamma$, and so we can extend $\eta$ to a map
\[ \breve{\eta} \colon \breve{\Sigma} \to \widetilde M_\infty,\]
whose restriction to the added half-open annuli is the restriction of the covering map to $\partial \widetilde M_\infty$.  The singular hyperbolic metric $\sigma_\eta$ extends to a singular hyperbolic metric of the same name on $\breve{\Sigma}$, so that $\breve{\eta}$ is a local isometry on each half-open annulus.
Then let $\sigma_\eta^{\hyp}$ be the hyperbolic uniformization of the conformal structure on $(\breve{\Sigma},\sigma_\eta)$. We can do this since taking a small open ball around a cone point and deleting the cone point results in something conformally equivalent to the unit disk minus a point. We write $\ell_{\sigma_\eta^\hyp}(\gamma)$ for the length of the $\sigma_\eta^\hyp$--geodesic representative of any essential closed curve $\gamma$ in $\breve{\Sigma}$.

\begin{theorem}\label{T:Ahlfors}
For any $\eta \colon (\Sigma,{\mathcal T}) \to \widetilde M_\infty$ in $\SH$, the identity map
\[ 
    \id \colon (\breve{\Sigma},\sigma_\eta^\hyp) \to (\breve{\Sigma},\sigma_\eta)
\]
is $1$--Lipschitz. Furthermore, for each component $\gamma \subset \partial \Sigma$, the $\ell_{\sigma_\eta^\hyp}(\gamma) \leq 2L$.
\end{theorem}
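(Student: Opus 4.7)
My plan is to establish the two claims separately. For the $1$--Lipschitz statement, I will apply the Ahlfors--Schwarz lemma for singular metrics of Gaussian curvature at most $-1$. Away from the vertices of $\mathcal T$, $\sigma_\eta$ has constant curvature $-1$, being built from totally geodesic hyperbolic triangles. At interior vertices of $\mathcal T$, the simplicial hyperbolic surface condition gives cone angle at least $2\pi$; the original boundary vertices of $\mathcal T$ now lie in the interior of $\breve{\Sigma}$, and at such a vertex the cone angle equals the original boundary angle (at least $\pi$ by definition) plus the $\pi$ contributed by the smooth hyperbolic half-annulus glued along $\gamma$, again giving at least $2\pi$. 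Thus $\sigma_\eta$ has curvature at most $-1$ in the distributional sense throughout $\breve{\Sigma}$, and the singular Ahlfors--Schwarz lemma yields $\sigma_\eta \leq \sigma_\eta^{\hyp}$ pointwise, which is exactly the $1$--Lipschitz claim.

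For the length bound, my plan is to exhibit an embedded annular neighborhood of $\gamma$ in $\breve{\Sigma}$ with controlled conformal modulus, and then invoke the standard modulus--length inequality. In the $\sigma_\eta$ metric, the glued half-annulus is isometric to a hyperbolic funnel $\{\mathrm{Re}(z) > 0\}/\langle z \mapsto e^{\ell_0} z\rangle$ inside $\mathbb{H}/\langle z \mapsto e^{\ell_0} z\rangle$, with $\ell_0 := \ell_{\sigma_\eta}(\gamma) \leq L$. Using the coordinate $w = \log z$, this funnel is conformally the open cylinder $(\mathbb{R}/\ell_0\mathbb{Z}) \times (0, \pi/2)$, which has modulus $\pi/(2\ell_0)$. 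Enlarging slightly into $\Sigma$ if needed, we obtain an embedded essential open annulus $A \subset \breve{\Sigma}$ around $\gamma$ with $\mathrm{mod}(A) \geq \pi/(2\ell_0) \geq \pi/(2L)$.

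To conclude, I invoke the standard fact that if $A \subset X$ is an embedded essential annulus in a hyperbolic Riemann surface $X$, then the hyperbolic length of the geodesic representative of its core is at most $\pi/\mathrm{mod}(A)$ (a short Schwarz--Pick comparison after lifting to the annular cover $X_\gamma$, where the inclusion $A \hookrightarrow X_\gamma$ preserves the core homotopy class). Applied to our $A$ inside $(\breve{\Sigma}, \sigma_\eta^{\hyp})$, this yields $\ell_{\sigma_\eta^{\hyp}}(\gamma) \leq \pi/\mathrm{mod}(A) \leq 2\ell_0 \leq 2L$, as required. The main technical point that will demand care is the application of the singular Ahlfors--Schwarz lemma: one must justify the cone angle bookkeeping at the doubled boundary vertices (handled above) and check that $\breve{\Sigma}$ is uniformized by the disk, which follows since $\breve{\Sigma}$ is a connected, non-compact Riemann surface of negative Euler characteristic with non-elementary fundamental group.
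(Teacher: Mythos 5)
Your proposal is correct and follows essentially the same route as the paper: the $1$--Lipschitz claim by Ahlfors--Schwarz--Pick, and the length bound by computing the modulus of the glued half-annulus (a funnel of modulus $\pi/(2\ell_0)$ where $\ell_0 \leq L$) and appealing to the monotonicity of modulus under inclusion into the annular cover together with the standard modulus-length relation. Your treatment of the first part is somewhat more explicit than the paper's one-line citation---in particular the bookkeeping showing the cone angles at the doubled boundary vertices of $\mathcal T$ are at least $\pi + \pi = 2\pi$---which is a correct and welcome elaboration, but not a different argument.
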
 
\begin{proof}
The first statement is a consequence of the Ahlfors--Schwartz--Pick Theorem \cite{Ahlfors.1938}.  For the second statement, we will use a modulus argument (see \cite[Theorem~2.16.1]{epstein-marden.2006} for a precise statement of the correspondence between modulus of an annulus and the length of its core curve). Note that the length of $\phi_\alpha(\gamma) = \eta(\gamma)$ in $\widetilde M_\infty$ is at most $L$, and so the annular cover of the component of $\partial \widetilde M_\infty$ containing this curve has modulus at least $\frac{\pi}{L}$. Consequently, the interior of the half-open annulus (which is half of this annular cover) has modulus at least $\frac{\pi}{2L}$.  But this annulus lifts to the annular cover of $\breve{\Sigma}$ to which $\gamma$ lifts, and hence this cover has modulus at least $\frac{\pi}{2L}$ by monotonicity of moduli of annuli.  This in turn implies that $\ell_{\sigma_\eta^\hyp}(\gamma) \leq 2L$, as required.
\end{proof}

\subsection{Interpolation and pants paths} \label{S:interpolation}

Two triangulations ${\mathcal T}$ and ${\mathcal T}'$ differ by a {\em flip move} if there are edges $e$ of ${\mathcal T}$ and $e'$ of ${\mathcal T}'$ so that $e$ and $e'$ intersect transversely in a single point and their complements in the $1$--skeleta agree:
\[ 
    {\mathcal T}^{(1)}-e = {\mathcal T}'^{(1)}-e'.
\]
In this case, let ${\mathcal T}*{\mathcal T}'$ be the triangulation obtained from ${\mathcal T}$ by adding a vertex at $e \cap e'$, subdividing each of $e$ and $e'$ and adding the subdivided $e'$ to the $1$--skeleton.  This is the ``minimal common subdivision" of ${\mathcal T}$ and ${\mathcal T}'$ which is illustrated in \Cref{fig:min-common-subdivision}.

\begin{figure}[htb!]
\centering
\def\svgwidth{2.5in}
\begin{tikzpicture}
\node at (0,0) {\includegraphics[width = 4in]{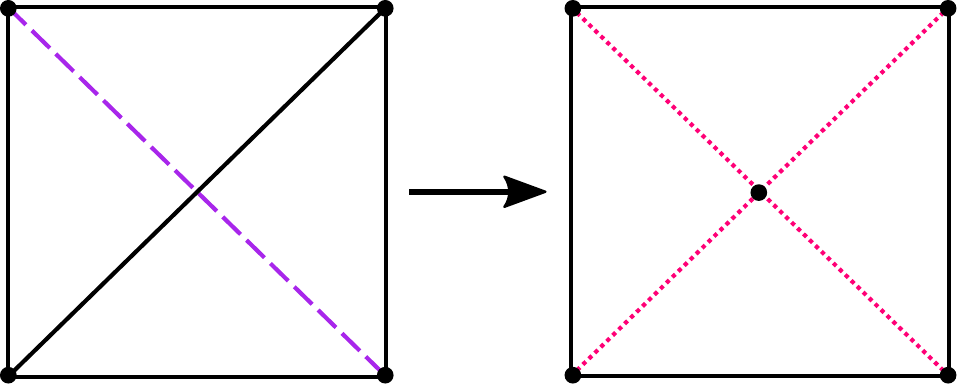}};
\node at (-1.9,1.45) {$e$};
\node at (-4,1.5) {$e'$};
\end{tikzpicture}
\caption{The minimal common subdivision ${\mathcal T} *{\mathcal T}'$ (shown on the right) of ${\mathcal T}$ and ${\mathcal T}'$ (shown on the left) where $e \subset {\mathcal T}$ is represented by the solid black diagonal edge and $e' \subset {\mathcal T}'$ is represented by the dashed purple diagonal edge. The four pink dotted edges in ${\mathcal T}*{\mathcal T}'$ are the new edges introduced by subdividing $e$ and $e'$}
\label{fig:min-common-subdivision}

\end{figure}

Given $\eta \colon (\Sigma,{\mathcal T}) \to \widetilde M_\infty$ in $\SH_{k_0}$ and a triangulation ${\mathcal T}'$ differing from ${\mathcal T}$ by a flip, we may reparameterize $\eta$ by precomposing with a homeomorphism isotopic to the identity so that it is also a $(k_0+1)$--simplicial hyperbolic surface
\[ 
    \eta \colon (\Sigma,{\mathcal T}*{\mathcal T}') \to \widetilde M_\infty.
\] 
The proof of \cite[Lemma~5.3]{Canary.CoveringTheorem} can be applied to prove the following.

\begin{lemma}\label{lemma.Canary.Family} Suppose ${\mathcal T}$ and ${\mathcal T}'$ differ by a flip and that $\eta \colon (\Sigma,{\mathcal T}) \to \widetilde M_\infty$ and $\eta' \colon (\Sigma,{\mathcal T}') \to \widetilde M_\infty$ lie in $\SH_{k_0}$.
Then there is a $1$--parameter family
\[ 
    \{
        \eta_t \colon (\Sigma,{\mathcal T}*{\mathcal T}') \to \widetilde M_\infty
        \mid 
        t\in [0,1]
    \} \subset \SH_{k_0+1}
\]
such that $\eta_0 = \eta$ and $\eta_1 = \eta'$, up to reparameterization by homeomorphisms isotopic to the identity by an isotopy that is stationary on the boundary. \qed
\end{lemma}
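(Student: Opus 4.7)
The plan is to realize both $\eta$ and $\eta'$ as $(k_0+1)$--simplicial hyperbolic surfaces on the common subdivision $\mathcal T * \mathcal T'$ and then interpolate through $\SH_{k_0+1}(\mathcal T * \mathcal T')$ by moving only the single additional vertex $v = e\cap e'$. After the reparametrizations discussed just before the lemma, the vertex sets of $\eta$ and $\eta'$ coincide; since both agree with $\phi_\alpha$ on $\partial \Sigma$, they agree at every vertex except possibly $v$, which lies in the interior of $\Sigma$ (because a flippable edge is necessarily interior). Write $p = \eta(v) \in \widetilde M_\infty$, lying on the geodesic $\eta(e)$, and $q = \eta'(v) \in \widetilde M_\infty$, lying on $\eta'(e')$. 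First I would fix a smooth path $\gamma \colon [0,1] \to \widetilde M_\infty$ from $p$ to $q$, initially the geodesic segment in the universal cover $\widetilde M$.

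For each $t \in [0,1]$, define $\eta_t \colon (\Sigma, \mathcal T * \mathcal T') \to \widetilde M_\infty$ to agree with $\eta$ on every vertex other than $v$, to send $v$ to $\gamma(t)$, and to straighten every edge to the geodesic between the images of its endpoints and every triangle to the corresponding totally geodesic triangle, exactly as in the existence construction in the proof of \Cref{L:realize simplicial}. Then $\eta_0 = \eta$ and $\eta_1 = \eta'$ up to reparametrization, only the four triangles incident to $v$ vary with $t$, and $t \mapsto \eta_t$ is continuous in the compact--open topology. The required homotopy from $\eta_t$ to $\phi_\alpha$ is built triangle-by-triangle by the straight-line construction in $\widetilde M$, stationary on $\partial \Sigma$ since $\eta_t|_{\partial \Sigma} = \phi_\alpha|_{\partial \Sigma}$.

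The main obstacle is checking that $\eta_t$ actually lies in $\SH_{k_0+1}(\mathcal T * \mathcal T')$ for every $t$, which amounts to non-degeneracy of the four triangles at $v$ and the cone angle condition. Non-degeneracy holds at $t = 0,1$ by hypothesis and persists for a generic choice of $\gamma$, since any failure forces $\gamma(t)$ to be collinear in $\widetilde M$ with two neighbors of $v$, a codimension-one condition removable by a small perturbation of $\gamma$. The boundary cone angle condition at each of the $k_0$ vertices on $\partial \Sigma$ is automatic, because the link there contains the two antipodal tangent vectors to $\partial \Sigma$, just as in the proof of \Cref{L:realize simplicial}. The delicate point is the cone angle at $v$ (and at its four neighbors, whose two incident moving triangles are perturbed): at $t = 0$ and $t = 1$ the cone angle at $v$ equals exactly $2\pi$ because $v$ lies on a straight geodesic of the unrefined triangulation, while the cone angles at the neighbors are given by the hypotheses on $\eta$ and $\eta'$. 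For intermediate $t$, I would adapt the calculation in the proof of \cite[Lemma~5.3]{Canary.CoveringTheorem}: taking $\gamma$ to be the geodesic in $\widetilde M$, or if that direct path fails, concatenating two short geodesic segments through a carefully chosen intermediate position, and applying the NLSC characterization of \cite[Lemma~4.2]{Canary.CoveringTheorem}, one verifies that the cone angle at $v$ stays $\geq 2\pi$ and the neighboring interior cone angles also stay $\geq 2\pi$ along the entire path. This produces the $1$--parameter family asserted by the lemma, with the reparametrization ambiguity absorbed into the triangulation-preserving isotopy on each $\eta_t$.
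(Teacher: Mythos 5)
Your proof is correct and follows essentially the same route as the paper's sketch (and Canary's Lemma~5.3): realize both maps on the common subdivision $\mathcal T * \mathcal T'$, slide the single new vertex $v=e\cap e'$ along the geodesic segment in $\widetilde M$ from $\eta(v)$ to $\eta'(v)$, and straighten. Two small observations: the caveats about perturbing $\gamma$ or concatenating segments are unnecessary, since along the straight geodesic non-degeneracy fails only if $\gamma(t)$ lands on an edge line of the tetrahedron spanned by the four corners, which cannot happen in the interior when that tetrahedron is non-degenerate, and the cone angle at $v$ is $\geq 2\pi$ precisely because the two tangent directions $\pm\dot\gamma(t)$ are antipodal and lie in the spherical convex hulls of $\{\vec{vA},\vec{vC}\}$ and $\{\vec{vB},\vec{vD}\}$ respectively (so a perturbation off the geodesic would actually endanger this argument); also note that since the $k_0$ original vertices all lie on $\partial\Sigma$, the four neighbors of $v$ are boundary vertices whose $\geq\pi$ condition is automatic, so there are no ``neighboring interior cone angles'' to check.
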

Rather than repeat all the details of the proof as in \cite{Canary.CoveringTheorem}, we explain the basic idea.  By further reparameterization if necessary, the two maps $\eta$ and $\eta'$ will agree outside the ``square" with diagonals $e$ and $e'$, and the interpolation takes place entirely within this square.  Lifting the restrictions of $\eta$ and $\eta'$ to the square, the two original triangles of $\mathcal T$ in this square, together with the two triangles of $\mathcal T'$ define a tetrahedron in $\mathbb H^3$.  Connecting the image of $e$ via the lift of $\eta$ to the image of $e'$ via the lift of $\eta'$ by a geodesic segment, the one-parameter family is essentially obtained by ``sliding" the new vertex along this geodesic, and then projecting back to $\widetilde M_\infty$.

We will also need the following result; see e.g.~Hatcher \cite{Hatcher1991}.

\begin{lemma}\label{lemma.Hatcher.Path}
Let ${\mathcal T}$ and ${\mathcal T}'$ be two triangulations of $\Sigma$ with $k_0$ vertices, one on each component of $\partial \Sigma$. Then there is a sequence ${\mathcal T}_0 = {\mathcal T}, {\mathcal T}_1, \ldots , {\mathcal T}_m$ of triangulations, each differing from the previous one by a flip so that $\mathcal T_m = \mathcal T'$, up to isotopy which is stationary on $\partial \Sigma$. 
\qed
\end{lemma}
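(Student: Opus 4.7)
The plan is to proceed by induction on the minimal number $N(\mathcal T,\mathcal T')$ of transverse intersection points between edges of $\mathcal T$ and of $\mathcal T'$, minimized over representatives in the isotopy classes of each triangulation, where the isotopy is required to fix $\partial \Sigma$ pointwise.

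For the base case $N(\mathcal T,\mathcal T') = 0$: once the edges are placed in minimal position, every edge $e'$ of $\mathcal T'$ lies entirely inside a single triangle of $\mathcal T$. Since each endpoint of $e'$ is one of the prescribed marked points on $\partial \Sigma$, such an arc must be isotopic, rel $\partial \Sigma$, to one of the three edges of that triangle. Thus the two triangulations share the same edge set and coincide up to isotopy stationary on $\partial \Sigma$, as required.

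For the inductive step, choose an edge $e' \in \mathcal T'$ that crosses some edge of $\mathcal T$, traverse $e'$ from one of its endpoints, and let $e$ be the first edge of $\mathcal T$ that $e'$ meets. The two triangles of $\mathcal T$ sharing $e$ form a region $Q$; when $Q$ is an embedded quadrilateral, flip $e$ to the opposite diagonal $e^*$ to obtain a new triangulation $\mathcal T^*$ differing from $\mathcal T$ by a single flip. An innermost-bigon analysis on the initial subsegment of $e'$ inside $Q$ shows that $e^*$ can be chosen so that $N(\mathcal T^*,\mathcal T') < N(\mathcal T,\mathcal T')$, at which point induction completes the argument. It also needs to be checked that $\mathcal T^*$ still has exactly $k_0$ vertices located at the prescribed marked points; this is automatic because the flip does not alter the vertex set.

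The main obstacle is the possibility that $Q$ fails to be embedded: since $\mathcal T$ is a $\Delta$-complex structure rather than a classical simplicial complex, two edges of $Q$ (or two vertices) may be identified, so there is no honest quadrilateral across which to flip. The standard remedy is either to first perform an auxiliary flip on an adjacent edge of $Q$ to resolve the identification, or to re-choose the first-crossing edge so that a bigon bounded by a subarc of $e'$ and a subarc of an edge of $\mathcal T$ becomes innermost and is eliminated by a flip. Organizing these moves to ensure the complexity $N(\mathcal T,\mathcal T')$ still drops in each step is precisely what is carried out in Hatcher's arguments in \cite{Hatcher1991}, where in fact the stronger result is proved that the complex of triangulations with fixed vertex set is contractible; connectedness of the flip graph follows as an immediate corollary.
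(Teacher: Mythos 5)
The paper states this lemma with no proof at all, citing only Hatcher \cite{Hatcher1991}, so there is no in-paper argument to compare against; your sketch of the standard induction on geometric intersection number, together with the explicitly acknowledged deferral of the non-embedded-quadrilateral case to Hatcher, effectively lands on the same citation the paper relies upon. One small wording nit: you write that ``$e^*$ can be chosen so that $N(\mathcal{T}^*,\mathcal{T}')$ decreases,'' but once the edge $e$ to flip is fixed the flip across $Q$ has a \emph{unique} target diagonal; what is actually chosen in the careful argument is \emph{which} edge of $\mathcal{T}$ to flip (equivalently, which innermost disk cut off by a subarc of an edge of $\mathcal{T}'$ to eliminate) so that the intersection count is guaranteed to drop, and it is precisely organizing this choice in the presence of $\Delta$-complex identifications that makes Hatcher's argument nontrivial.
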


\noindent We will use these two lemmas together with \Cref{L:realize simplicial} to prove the following corollary.

\begin{corollary} \label{C:flip interpolation}
Suppose 
 $\eta \colon (\Sigma,{\mathcal T}) \to \widetilde M_\infty$
 and 
 $\eta' \colon (\Sigma,{\mathcal T}') \to \widetilde M_\infty$ 
are two $k_0$--simplicial hyperbolic surfaces.
Then there is a $1$--parameter family
\[ 
    \{ 
        \eta_t \colon (\Sigma,{\mathcal T}_t) \to \widetilde M_\infty
        \mid t\in [0,1]\} 
        \subset \SH
\]
such that $\eta_0 = \eta$ and $\eta_1 = \eta'$.
\end{corollary}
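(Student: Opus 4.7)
The plan is to bootstrap from the two preceding lemmas: we use \Cref{lemma.Hatcher.Path} to reduce the general case to consecutive flip moves, and then apply \Cref{lemma.Canary.Family} on each piece, concatenating the resulting $1$--parameter families.

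First, invoke \Cref{lemma.Hatcher.Path} to obtain a sequence of triangulations $\mathcal T = \mathcal T_0, \mathcal T_1, \ldots, \mathcal T_m = \mathcal T'$ of $\Sigma$, each with $k_0$ vertices (one on each component of $\partial \Sigma$ at the designated points), and with $\mathcal T_{i}$ and $\mathcal T_{i+1}$ differing by a flip for $0 \leq i < m$. For each intermediate $i$ with $0 < i < m$, apply \Cref{L:realize simplicial} to the triangulation $\mathcal T_i$ to produce a $k_0$--simplicial hyperbolic surface $\eta_i \colon (\Sigma, \mathcal T_i) \to \widetilde M_\infty$; set $\eta_0 = \eta$ and $\eta_m = \eta'$.

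Next, for each $0 \leq i < m$, apply \Cref{lemma.Canary.Family} to the pair $\eta_i$ and $\eta_{i+1}$ to produce a $1$--parameter family
\[
\{ \eta^{(i)}_t \colon (\Sigma, \mathcal T_i * \mathcal T_{i+1}) \to \widetilde M_\infty \mid t \in [0,1] \} \subset \SH_{k_0+1}
\]
with $\eta^{(i)}_0 = \eta_i$ and $\eta^{(i)}_1 = \eta_{i+1}$, possibly after reparameterizing by a homeomorphism isotopic to the identity via an isotopy stationary on $\partial \Sigma$ (which, by the uniqueness clause of \Cref{L:realize simplicial}, is harmless). Concatenate these families in the usual way: subdivide $[0,1]$ into $m$ equal intervals and, on the $i$th interval, use a suitably rescaled copy of $\eta^{(i)}_t$. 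At the transition times $t = i/m$, the surface $\eta^{(i)}_1$ and the surface $\eta^{(i+1)}_0$ both equal $\eta_{i+1}$ up to precomposition with a homeomorphism isotopic to the identity rel $\partial \Sigma$; absorbing that reparameterization into the second piece yields a genuinely continuous family $\{\eta_t \colon (\Sigma, \mathcal T_t) \to \widetilde M_\infty \mid t \in [0,1]\} \subset \SH$, where $\mathcal T_t = \mathcal T_i * \mathcal T_{i+1}$ on the $i$th interval.

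The one point requiring care is the matching of reparameterizations at the transition times; this is the main, but mild, obstacle. The rest is essentially bookkeeping: each piece lies in $\SH_{k_0+1}$, so the concatenated family lies in $\SH$ as required (the underlying triangulation jumps from $\mathcal T_{i-1}*\mathcal T_i$ to $\mathcal T_i*\mathcal T_{i+1}$ at the transition, but the map $\eta_{i}$ itself is unchanged up to the allowed reparameterization, so the family is continuous in the compact--open topology). Finally, $\eta_0 = \eta$ and $\eta_1 = \eta'$ by construction, which completes the proof.
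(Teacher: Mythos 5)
Your proof is correct and follows essentially the same route as the paper's: use \Cref{lemma.Hatcher.Path} to produce a flip sequence, \Cref{L:realize simplicial} to realize each intermediate triangulation by a $k_0$--simplicial hyperbolic surface, \Cref{lemma.Canary.Family} to interpolate across each flip, and concatenate while absorbing the reparameterizations at transition times via the uniqueness clause of \Cref{L:realize simplicial}.
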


\begin{proof}
Let $\mathcal T = \mathcal T_0,\ldots, \mathcal T_m = \mathcal T'$ be the sequence of flips from \Cref{lemma.Hatcher.Path}.  By \Cref{L:realize simplicial}, there is a sequence of simplicial hyperbolic surfaces
\[ \{\eta_j \colon (\Sigma,\mathcal T_j) \to \widetilde M_\infty\}_{j=0}^m\]
and $\eta_0 = \eta$, $\eta_m = \eta'$, up to reparameterization (isotopic to the identity by an isotopy that is stationary on the boundary).  By \Cref{lemma.Canary.Family}, for each $j=1,\ldots,m$, we can interpolate between $\eta_{j-1}$ and $\eta_j$ by a one-parameter family of simplicial hyperbolic surfaces.  Concatenating these one-parameter families, and reparameterizing by precomposing by isotopies between these families whenever necessary, produces the required 1-parameter family from $\eta$ to $\eta'$, as required.
\end{proof}

We are now able to define a continuous path in Teichm\"uller space given by the hyperbolic structures obtained from uniformization of the 1-parameter family of simplicial hyperbolic surfaces given to us in \Cref{C:flip interpolation}.

\begin{lemma} \label{L:hyperbolic path}
Given the family $\{\eta_t \colon (\Sigma,{\mathcal T}_t) \to \widetilde M_\infty\mid t\in [0,1]\} \subset \SH$
from \Cref{C:flip interpolation}, the map $[0,1] \to \Teich(\breve{\Sigma})$, given by $t \mapsto \sigma_{\eta_t}^\hyp$ defines a (continuous) path in $\Teich (\breve \Sigma)$.
\end{lemma}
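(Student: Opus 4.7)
The plan is to establish continuity on each subinterval of $[0,1]$ on which the underlying triangulation of the family is constant, and then verify that nothing bad happens at the transition times, where the family is only matched up to isotopy rel boundary.

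First I would partition $[0,1]$ as $0 = s_0 < s_1 < \cdots < s_m = 1$ so that on each $[s_{j-1},s_j]$ the family $\eta_t$ is the one-parameter family from \Cref{lemma.Canary.Family}, supported on a single fixed triangulation $\mathcal T_{j-1} * \mathcal T_j$. Inspecting Canary's construction (a single new vertex slides along a geodesic in $\mathbb H^3$ while all other vertices remain fixed), the positions of the vertices of the lift $\widetilde \eta_t$ in $\mathbb H^3$ vary continuously in $t$, and hence so do all edge lengths and triangle angles. This makes the singular hyperbolic metrics $\sigma_{\eta_t}$ vary continuously (for instance in the bi-Lipschitz topology on metrics on $\Sigma$, rel the fixed triangulation).

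Next I would extend to $\breve \Sigma$. A crucial observation is that for every $t$, each component $\gamma \subset \partial \Sigma$ is mapped by $\eta_t$ to the \emph{same} closed geodesic $\phi_\alpha(\gamma) = \phi_\omega(\gamma)$ in $\partial \widetilde M_\infty$; consequently the half-open annuli glued to build $\breve \Sigma$ are canonically identified for all $t$. Thus $\sigma_{\eta_t}$ extends to a continuous family of singular hyperbolic metrics on the fixed surface $\breve \Sigma$. Passing to conformal structures, one chooses local holomorphic charts triangle by triangle (and via the local isometries on the annular ends) whose transition maps depend continuously on the edge-length and angle data; this exhibits the underlying conformal structure as varying continuously in $t$. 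Then Ahlfors--Bers continuity of uniformization (or equivalently, continuity of the map from Beltrami differentials to Teichm\"uller space) gives that $t \mapsto \sigma_{\eta_t}^\hyp$ is continuous on each subinterval $[s_{j-1},s_j]$.

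At each transition time $s_j$, the two subfamilies match only after precomposing one of them with a homeomorphism isotopic to the identity rel $\partial \Sigma$. Such a reparameterization represents the same point of $\Teich(\breve\Sigma)$ (after extending the isotopy trivially over the annular ends, which is possible since the maps agree pointwise on $\partial \Sigma$). Hence the one-sided limits at $s_j$ coincide in $\Teich(\breve\Sigma)$ and the path is continuous on all of $[0,1]$.

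The main obstacle I anticipate is the continuity of uniformization for families of \emph{singular} hyperbolic metrics with controlled cone angles. This is standard but requires some care: the clean way I would handle it is to work conformally, build an explicit continuous family of quasiconformal maps from a fixed reference conformal structure to $(\breve\Sigma, \sigma_{\eta_t})$ using the triangulations, read off the Beltrami differentials, and invoke continuity of the Bers embedding (or simply continuity of the solution of the Beltrami equation in parameters). The extension to the non-compact surface $\breve\Sigma$ with its cusp-like ends should not cause trouble because the ends are covered by totally geodesic annuli that are constant in $t$.
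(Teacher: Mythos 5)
Your argument is correct and follows essentially the same route as the paper: continuity on each subinterval where the triangulation is fixed comes from the continuous variation of the hyperbolic triangle shapes in Canary's interpolation, and at transition times the reparameterization by a boundary-stationary isotopy represents a constant path in $\Teich(\breve\Sigma)$. The paper states both of these points more tersely, while you supply the supporting details (canonical identification of the funnel annuli since $\eta_t|_{\partial\Sigma}$ is constant, and Ahlfors--Bers continuity of uniformization), but the structure of the argument is the same; one small quibble is that the ends of $\breve\Sigma$ are funnels rather than cusp-like, though this does not affect your reasoning.
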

\begin{proof}
The family in \Cref{lemma.Canary.Family} defines a continuous path in $\Teich(\breve \Sigma)$ since the shapes of the hyperbolic triangles in the interpolation vary continuously; see Section 5 of \cite{Canary.CoveringTheorem}.  The terminal point of the path from $\eta_{j-1}$ to $\eta_j$ and the initial point of the path from $\eta_j$ to $\eta_{j+1}$ differ by reparameterization by a homeomorphism isotopic to the identity.  This isotopy defines a constant path in $\Teich(\breve \Sigma)$ since the cone metrics are all obtained by pulling back the same cone metric by the homeomorphisms throughout the isotopy.  Therefore, the paths can be concatenated to produces a path from $\sigma_{\eta}^\hyp$ to $\sigma_{\eta'}^\hyp$, as required.
\end{proof}

Recall that $P_\alpha,P_\omega$ are pants decompositions on $\Sigma$ so that with respect to $\sigma_\alpha$ and $\sigma_\omega$, the lengths of each component of $P_\alpha$ and $P_\omega$, respectively are at most $L = L(\chi(f), \xi(f))$, from \Cref{L:bounded pants decomp}.  The next lemma says we can find simplicial hyperbolic surfaces whose cone metrics have hyperbolic uniformizations in which $P_\alpha$ and $P_\omega$ are also bounded length.

\begin{lemma} \label{L:nearly pleated}
There exists $L_1 = L_1(\chi(f), \xi(f))$ and a pair of simplicial hyperbolic surfaces
$\eta_{\alpha} \colon (\Sigma,{\mathcal T}_{\alpha}) \to \widetilde M_\infty$
and
$
    \eta_{\omega} \colon (\Sigma,{\mathcal T}_{\omega}) \to \widetilde M_\infty 
$
in $\SH_{k_0}$
such that each component of $P_{\alpha}$ and $P_{\omega}$ have length at most $L_1$ in $\sigma_{{\mathcal T}_{\alpha}}^\hyp$ and $\sigma_{{\mathcal T}_{\omega}}^\hyp$, respectively.
\end{lemma}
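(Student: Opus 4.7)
The plan is to construct $\eta_\alpha$ in detail; the construction of $\eta_\omega$ is analogous starting from $\phi_\omega$. Note that $|\chi(\Sigma)|$ and $k_0 = |\partial \Sigma|$ are bounded in terms of $\chi(f)$ and $\xi(f)$ by \eqref{Eq:Big Sigma Euler}, and by Convention~\ref{convention:pleated-surface}, $\phi_\alpha$ realizes $P_\alpha$ as closed geodesics of length at most $L = L(\chi(f),\xi(f))$ in $(\Sigma,\sigma_\alpha)$. The strategy has three steps: (a) choose a triangulation $\mathcal T_\alpha$ of $\Sigma$ with vertex set the $k_0$ fixed basepoints on $\partial \Sigma$, whose $1$-skeleton contains, for each $\gamma \in P_\alpha$, an edge-loop freely homotopic to $\gamma$; (b) realize $\mathcal T_\alpha$ as $\eta_\alpha \in \SH_{k_0}$ via Lemma~\ref{L:realize simplicial}; (c) transfer the resulting bound on $\ell_{\sigma_{\eta_\alpha}}(\gamma)$ to a bound on $\ell_{\sigma_{\eta_\alpha}^\hyp}(\gamma)$ via a conformal modulus argument.

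For steps (a)--(b), we design $\mathcal T_\alpha$ so that the homotopy class of each edge, when straightened by $\eta_\alpha$ to a $\widetilde M_\infty$-geodesic, has length bounded by some $C_0 = C_0(\chi(f),\xi(f))$. For each interior curve $\gamma \in P_\alpha$, this amounts to finding a short path in $\widetilde M_\infty$ from a boundary vertex $\phi_\alpha(v)$ to the closed geodesic $\phi_\alpha(\gamma)$ and using it to form a based loop at $v$ in the free homotopy class of $\gamma$. Such a path is available because $\phi_\alpha(\Sigma)$ has bounded area (Gauss--Bonnet applied to $\sigma_\alpha$) and because Basmajian's collar (see the discussion preceding Lemma~\ref{L:bounding the boundary}) uniformly controls the $3$-manifold geometry near $\partial \widetilde M_\infty$. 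Extending the resulting loops to a full triangulation on $k_0$ vertices is possible by standard topological arguments, since $\Sigma$ has bounded complexity. Applying Lemma~\ref{L:realize simplicial} yields $\eta_\alpha$; since $\eta_\alpha$ sends each edge to the $\widetilde M_\infty$-geodesic in its relative homotopy class, each edge has length at most $C_0$ in $\sigma_{\eta_\alpha}$, and each $\gamma \in P_\alpha$ is homotopic to a loop of boundedly many such edges, so $\ell_{\sigma_{\eta_\alpha}}(\gamma) \leq C_1 = C_1(\chi(f),\xi(f))$.

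For step (c), the singular hyperbolic metric $\sigma_{\eta_\alpha}$ has interior cone angles at least $2\pi$, hence is $\mathrm{CAT}(-1)$; a standard collar estimate therefore produces an embedded essential annulus $A_\gamma$ about the $\sigma_{\eta_\alpha}$-geodesic representative of $\gamma$ of modulus bounded below by some $M_0 = M_0(C_1) > 0$. Since modulus is a conformal invariant and $\sigma_{\eta_\alpha}^\hyp$ lies in the same conformal class as $\sigma_{\eta_\alpha}$ on $\breve \Sigma$, the annulus $A_\gamma$ has modulus at least $M_0$ in $\sigma_{\eta_\alpha}^\hyp$ as well. In the hyperbolic metric $\sigma_{\eta_\alpha}^\hyp$ this bounds $\ell_{\sigma_{\eta_\alpha}^\hyp}(\gamma) \leq L_1$ for some $L_1 = L_1(\chi(f),\xi(f))$, as required. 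The main obstacle is step (a): choosing edge homotopy classes so that their $\widetilde M_\infty$-geodesic realizations are uniformly bounded in length, which is delicate when some $P_\alpha$-curves are very short in $\sigma_\alpha$, since then intrinsic paths from $\partial \Sigma$ to $\gamma$ in $(\Sigma,\sigma_\alpha)$ itself may be long and one must use the $3$-dimensional geometry of $\widetilde M_\infty$ (via Basmajian's collar and the bounded area of $\phi_\alpha(\Sigma)$) to find shorter representatives.
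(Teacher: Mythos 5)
Your plan diverges from the paper's argument at the crucial point, and the divergence introduces a genuine gap that I don't think can be repaired along the lines you sketch.

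The paper does not try to choose a triangulation whose edges straighten to \emph{short} geodesics in $\widetilde M_\infty$. Instead, starting from a triangulation $\mathcal T$ whose arcs join all pairs of cuffs in each pair of pants of $P_\alpha$, it considers the ``spun'' triangulations $\mathcal T_j = D^j(\mathcal T)$, where $D$ is a multitwist about $P_\alpha \cup \partial\Sigma$ twisting in the direction of the spiraling of the pants lamination $\lambda$ realized by $\phi_\alpha$. The edges of $\mathcal T_j$ become arbitrarily \emph{long}, but their geodesic straightenings in $\widetilde M_\infty$ fellow-travel the leaves of $\lambda$, and in particular hug the closed geodesics $\phi_\alpha(\gamma)$ for $\gamma \subset P_\alpha$. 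This lets the paper build a representative of $\gamma$ in the cone metric $\sigma_{\eta_j}$ consisting of a long piece of one edge lying within $\epsilon$ of $\phi_\alpha(\gamma)$ plus boundedly many $\epsilon$-short jumps, giving $\ell_{\sigma_{\eta_j}}(\gamma) \leq \ell_{\sigma_\alpha}(\gamma) + \epsilon \leq L + \epsilon$. The modulus/collar step and the cone angle estimate at $\partial\Sigma$ then close the argument. Nothing in the paper's proof requires an \emph{a priori} bound on edge lengths.

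Your step (a), by contrast, asserts that one can find, for each $\gamma \subset P_\alpha$, an edge-loop in a $k_0$-vertex triangulation whose edges straighten to geodesic arcs in $\widetilde M_\infty$ of uniformly bounded length $C_0(\chi(f),\xi(f))$. You flag this as the delicate point and appeal to Gauss--Bonnet and Basmajian's collar, but neither gives what is needed. If some $\gamma \subset P_\alpha$ has small $\widetilde M_\infty$-geodesic length, then $\phi_\alpha(\gamma)$ sits at the core of a Margulis tube whose radius goes to $\infty$ as $\ell(\phi_\alpha(\gamma)) \to 0$, and Basmajian's collar on $\partial\widetilde M_\infty$ forces all the triangulation vertices to lie a definite distance from the boundary of that tube, hence at unbounded distance from $\phi_\alpha(\gamma)$. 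Any loop based at a boundary vertex and freely homotopic to $\gamma$ must then cross the tube, so the total geodesic length of its edges is unbounded; the bounded area of $\phi_\alpha(\Sigma)$ does not help, since bounded-area hyperbolic surfaces have unbounded diameter exactly because of such thin parts. So $C_0$ (and hence $C_1$) cannot depend only on the capacity, and the proof as written fails. The paper's spinning construction is designed precisely to sidestep this: the representative of $\gamma$ never approaches $\phi_\alpha(\gamma)$ from far away, it \emph{stays near} $\phi_\alpha(\gamma)$ from the start.

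Your step (c) (modulus argument transferring a bound on $\ell_{\sigma_{\eta_\alpha}}(\gamma)$ to one on $\ell_{\sigma_{\eta_\alpha}^\hyp}(\gamma)$) is the right idea and is essentially what the paper does at the end of its proof; but to make the collar argument work near $\partial\Sigma$ the paper also needs the cone angles at the boundary vertices to be at most $\pi + \epsilon$, which is Claim 5.9(2) and is again obtained from the spinning, so your argument would need an analogue even if step (a) were fixed.
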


\begin{remark} We will ultimately use an interpolation through simplicial hyperbolic surfaces to find a path in the pants graph between $P_\alpha$ and $P_\omega$ via \Cref{L:hyperbolic path}.
In the finite-type case \cite{Brock-mappingtorus-vol}, Brock similarly constructs such a path, though in his situation, the initial and terminal pants decompositions are {\em defined} from a simplicial hyperbolic surface and its image under a power of the monodromy.  In our case, $\Sigma$ is not invariant by any nontrivial power of $f$. However, \Cref{L:nearly pleated} allows us to choose simplicial hyperbolic surfaces that are adapted to our {\em existing} pants decompositions $P_\alpha$ and $P_\omega$ (as opposed to choosing these pants decompositions from the simplicial hyperbolic surfaces themselves). Our construction in \Cref{L:nearly pleated} is guided by the minimally well-pleated surfaces $\phi_\alpha$ and $\phi_\omega$ that realize $P_\alpha$ and $P_\omega$, respectively.
\end{remark}

\begin{proof}[Proof of \Cref{L:nearly pleated}] We carry out the proof for $P_\alpha$ with the one for $P_\omega$ being identical.
Recall that $P_\alpha$ is the pants decomposition in $\Sigma$ so that the pleating locus of $\phi_\alpha$ is contained in a pants lamination $\lambda$ containing $P_\alpha$.  Choose a triangulation $\mathcal T$ of $\Sigma$ so that the arcs of intersection of the $1$--skeleton with each pair of pants connect every pair of distinct boundary curves in those pants.  To do this, we can first find a collection of pairwise disjoint essential arcs with this property, slide the endpoints to lie on the $k_0$ vertices, then extend to a triangulation.

For each closed curve $\delta$ in $P_\alpha \cup \partial \Sigma$, the non-compact leaves spiraling in toward $\delta$ spiral to the left or to the right (from both sides when $\delta \subset P_\alpha$, by assumption; see \Cref{section.pleated}).  Let $D \colon \Sigma \to \Sigma$ be the multitwist obtained by applying a right-handed Dehn twist around those $\delta \subset P_\alpha \cup \partial \Sigma$ for which the spiraling is to the right, and a left-handed Dehn twist around those $\delta$ for which the spiraling is to the left, and set $\mathcal T_j = D^j(\mathcal T)$, for all integers $j \geq 0$.  We also let $\eta_j \colon (\Sigma,\mathcal T_j) \to \widetilde M_\infty$ be the simplicial hyperbolic surface given by \Cref{L:realize simplicial}.  

Consider the sequence of triangulations $\mathcal T_j$ on $\Sigma$, straightened to be geodesic with respect to $\sigma_\alpha$.  Lift $\phi_\alpha$ to the universal covers $\tilde \phi_\alpha \colon \widetilde \Sigma \to \widetilde M \subset \mathbb H^3$.  We write $\widetilde{\mathcal T}_j$ for the lifted triangulation.
\begin{claim} \label{claim:nearly geodesic}
Given $\epsilon > 0$, there exists $J > 0$ so that for all $j \geq J$ and every edge $e$ of $\widetilde{\mathcal T}_j$,  $\tilde \phi_\alpha\big|_e$ is a $(1+\epsilon,\epsilon)$--quasi-geodesic.
\end{claim}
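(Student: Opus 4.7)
The plan is to show that, as $j \to \infty$, the straightened edges $\tilde e_j$ of $\widetilde{\mathcal T}_j$ in $\widetilde\Sigma$ become uniformly tangent to leaves of $\tilde\lambda$, and that this forces the pleated image $\tilde\phi_\alpha(\tilde e_j)$ to be close to a geodesic in $\widetilde M_\infty$. First, I would describe the shape of $\tilde e_j$ for large $j$. The multitwist $D$ is a product of Dehn twists about the closed leaves of $\lambda$ inside $\Sigma$ (the pants curves of $P_\alpha$ and the components of $\partial\Sigma$), each twist oriented to match $\lambda$'s spiraling. Because $\mathcal T$ was chosen so that in every pair of pants each pair of distinct boundary curves is joined by an arc of the $1$--skeleton, $D^j(e)$ acquires $j$ spirals around each adjacent closed leaf. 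After geodesic straightening with respect to $\sigma_\alpha$ and lifting, $\tilde e_j$ runs for long stretches nearly parallel to lifts of closed leaves of $\tilde\lambda$, and the transverse angle $\varphi$ at which $\tilde e_j$ meets any leaf of $\tilde\lambda$ becomes uniformly small once $j$ is large.

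Second, I would study the image $\tilde\phi_\alpha(\tilde e_j) \subset \widetilde M \subset \mathbb H^3$. Since the pleating locus of $\phi_\alpha$ is contained in $\lambda$, the lifted map $\tilde\phi_\alpha$ is an isometry on each complementary ideal triangle of $\tilde\lambda$ and sends each leaf of $\tilde\lambda$ isometrically to a geodesic of $\mathbb H^3$. Hence $\tilde\phi_\alpha(\tilde e_j)$ is piecewise geodesic, bending only at transverse crossings with $\tilde\lambda$. A direct spherical-trigonometry computation, using the dihedral bending angle $\theta_l$ of $\phi_\alpha$ at a leaf $l$, shows the exterior bend $\Theta$ at such a crossing satisfies $\sin(\Theta/2) = \sin(\theta_l/2)\sin\varphi$. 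Only finitely many values of $\theta_l$ occur (one per closed leaf and per isolated bi-infinite leaf of $\lambda \cap \Sigma$), so every individual bend of $\tilde\phi_\alpha(\tilde e_j)$ tends to zero as $j \to \infty$.

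Finally, I would convert this bending bound into the quasi-geodesic conclusion. The image $\tilde\phi_\alpha(\tilde e_j)$ decomposes into long ``spiral portions'' that wrap many times around the geodesic image of a single pants-curve lift while staying in a fixed thin neighborhood of it, interspersed with a bounded number of short ``transition portions'' through ideal triangle images. Because $\tilde\phi_\alpha$ preserves arc length and each spiral hugs its reference pants-curve geodesic, the endpoints of a spiral portion of length $\ell$ are at distance approximately $\ell$ in $\mathbb H^3$; combined with the bounded contribution of the transitions and the vanishing bending angles, standard stability of quasi-geodesics in $\mathbb H^3$ (via the Morse lemma) yields that $\tilde\phi_\alpha(\tilde e_j)$ is a $(1+\epsilon,\epsilon)$--quasi-geodesic for all $j \geq J(\epsilon)$. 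Uniformity across all edges of $\widetilde{\mathcal T}_j$ follows from the equivariance of the construction and the finiteness of edge orbits of $\mathcal T$ under $\pi_1(\Sigma)$.

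The main obstacle is precisely the accumulation of the small but numerous bending angles across a growing number of crossings. The key point that makes the argument work is that each spiral portion of the image is confined by the negative curvature of $\mathbb H^3$ to a thin tube around its pants-curve geodesic, so the many small kinks do not allow the path to drift, and the overall image remains a quasi-geodesic with constants degrading to $(1,0)$ as $j \to \infty$.
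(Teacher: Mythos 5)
Your approach is genuinely different from the paper's, although the two share the same underlying skeleton (a local near-geodesic estimate plus a local-to-global step). The paper's proof is a pure metric comparison: for $j$ large, any length-$r$ subsegment $\delta$ of an edge has endpoints within $c$ of a leaf segment $\delta'$ of $\tilde\lambda$; since $\tilde\phi_\alpha$ is $1$--Lipschitz and sends leaves to geodesics, $d(\tilde\phi_\alpha(x),\tilde\phi_\alpha(y)) \ge d(x,y) - 4c$, so $\tilde\phi_\alpha|_e$ is an $r$--local $(1,4c)$--quasi-geodesic, and then \cite[Theorem~4.2.10]{CanaryEpsteinGreen} upgrades this to a global $(1+\epsilon,\epsilon)$--quasi-geodesic. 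You instead analyze the piecewise-geodesic structure of the image directly, via bending angles at crossings of $\tilde\lambda$ and a thin-tube confinement for the spiral portions. This is a workable, more geometric route, and you correctly flag the central danger that many small bends could accumulate.

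Two points deserve attention. First, the bending-angle computation is somewhat of a detour: the reason the path does not drift is better attributed to the $1$--Lipschitz property of $\tilde\phi_\alpha$ than to ``negative curvature of $\mathbb H^3$.'' Concretely, a geodesic spiral portion of $\tilde e_j$ lies within distance $c$ of a lift $\tilde\gamma$ of a pants curve in $\widetilde\Sigma$; since $\tilde\phi_\alpha$ is $1$--Lipschitz and sends $\tilde\gamma$ isometrically to a geodesic $\tilde\gamma^*$, the image spiral is forced to lie within $c$ of $\tilde\gamma^*$ and its endpoints track the corresponding points on $\tilde\gamma^*$, which gives the ``endpoints at distance $\approx\ell$'' estimate without any explicit angle bookkeeping. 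Second, and more substantively, the invocation of the Morse lemma at the end is the wrong tool: stability of quasi-geodesics says a $(K,C)$--quasi-geodesic fellow-travels a geodesic within bounded Hausdorff distance, but it does \emph{not} improve the multiplicative constant to $1+\epsilon$. To reach the $(1+\epsilon,\epsilon)$ conclusion you need a local-to-global principle specific to paths that are locally $(1,c)$--quasi-geodesic for small $c$, which is exactly what the paper's citation of \cite[Theorem~4.2.10]{CanaryEpsteinGreen} (consecutive geodesic segments meeting at angles close to $\pi$) supplies; replace the Morse-lemma appeal with such a statement and the argument goes through.
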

\begin{proof}
First, we observe that as $j$ tends to infinity, we have Hausdorff convergence of the $1$--skeleta, $\mathcal T_j^{(1)} \to \lambda$. This is because all angles of intersection with $P_\alpha \cup \partial \Sigma$ tend to zero, so the limit is a lamination containing $P_\alpha \cup \partial \Sigma$. See \Cref{fig:spinning}. Further, in any pair of pants, our original choice of $\mathcal T$ guarantees that there are non-compact leaves that spiral between any two boundary components.  Finally, our choice of $D$ ensures that the spiraling toward all the curves is in the correct direction.  These conditions uniquely determine the pants lamination $\lambda$.
If $\tilde \lambda$ is the lifted lamination to $\widetilde \Sigma$, then we also have $\widetilde{\mathcal T}_j^{(1)} \to \tilde \lambda$ as $j \to \infty$.
Thus, given $c, r > 0$ there exists $J >0$ so that for all $j > J$, every edge $e$ of $\widetilde{\mathcal T}_j$, and every segment $\delta \subset e$ of length at most $r$ and there is a segment $\delta'$ of a leaf of $\lambda$ such that the endpoints of $\delta'$ and the endpoints of $\delta$ are at most $c$ apart. 

Since $\tilde \phi_\alpha$ maps each leaf of $\tilde \lambda$ to a geodesic and is $1$--Lipschitz, it follows that the endpoints of $\tilde \phi_\alpha(\delta)$ are within $c$ of the endpoints of the geodesic $\tilde\phi_\alpha(\delta')$.  Therefore, if $x,y$ are the endpoints of $\delta$, we have
\[ d(\tilde \phi_\alpha(x),\tilde \phi_\alpha(y)) \geq \ell(\tilde \phi_\alpha(\delta'))-2c = \ell(\delta')-2c \geq \ell(\delta)-4 c = d(x,y) - 4 c.\]
Since $\tilde \phi_\alpha\big|_e$ is $1$--Lipschitz, it follows that this path is an $r$--local, $(1,4c)$--quasi-geodesic (\textit{i.e.}~every segment of length less than $r$ is a $(1, 4c)$--quasi-geodesic).  Using hyperbolic geometry, we can find $r$ sufficiently large and $c$ sufficiently small so that such a path is also $(1+\epsilon,\epsilon)$--quasi-geodesic.  Specifically, take $r=4$, and consider consecutive points $x_0,x_1,\ldots,x_n$ along $e$ with $1 \leq d(x_j,x_{j+1}) \leq 2$.  Taking $c$ sufficiently small, the angle at $\tilde \phi_\alpha(x_j)$ between geodesic segments $[\tilde \phi_\alpha(x_{j-1}),\tilde \phi_\alpha(x_j)]$ and $[\tilde \phi_\alpha(x_j),\tilde \phi_\alpha(x_{j+1})]$ in $\widetilde M \subset \mathbb H^3$ can be made arbitrarily close to $\pi$ (depending on $c$), and then we can apply \cite[Theorem~4.2.10]{CanaryEpsteinGreen} to see that the concatenation of geodesic segments is arbitrarily close to the geodesic.  Then $\tilde \phi_\alpha\big|_e$ is also as close to a geodesic as we like, and we can promote the local quasi-geodesic to a global quasi-geodesic.
Taking $J$ large enough to find such an $r$ and $c$, completes the proof of the claim.
\end{proof}

\begin{figure}[htb!]
\centering
\def\svgwidth{3in}
\includegraphics[width = 4in]{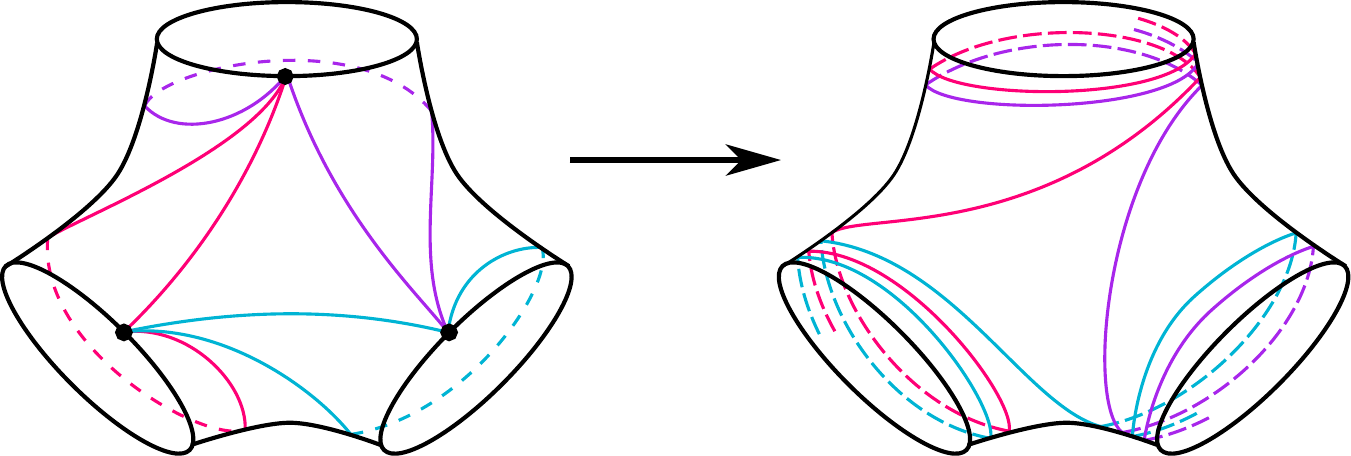}
\caption{By ``spinning" the triangulation on the left around each boundary component---more precisely, applying powers of Dehn twists about curves parallel to the boundary---we obtain the pants lamination in the limit, as shown on the right. Note that this is similar to (but distinct from) the spinning described in Thurston's notes \cite[Section 8.7]{TNotes} since we are applying powers of Dehn twists rather than pushing a point along a geodesic.}
\label{fig:spinning}
\end{figure}

\begin{claim} \label{Claim:nearly pleated}
Given $\epsilon >0$, there exists $J > 0$ so that for all $j \geq J$:
\begin{enumerate}
    \item For every component $\gamma \subset P_\alpha$, we have $\ell_{\sigma_{\eta_j}}(\gamma) \leq \ell_{\sigma_\alpha}(\gamma)+\epsilon$, and
    \item all cone angles in the boundary of $\Sigma$ for $\mathcal T_j$ are at most $\pi+\epsilon$.
\end{enumerate}
\end{claim}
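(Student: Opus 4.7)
The plan is to exploit the Hausdorff convergence $\mathcal{T}_j^{(1)} \to \lambda$ from the preceding claim, together with the quasi-geodesic control there on the edge-images $\tilde\phi_\alpha|_e$. In effect, these say that as $j \to \infty$ the simplicial hyperbolic surfaces $\eta_j$ approximate the pleated surface $\phi_\alpha$ well enough that (a) $\sigma_{\eta_j}$--lengths of curves in $P_\alpha$ approach their $\sigma_\alpha$--lengths, and (b) cone angles at boundary vertices approach the value $\pi$ they take in any pleated surface, whose boundary is smoothly geodesic. Both parts of the claim are then quantitative versions of these convergence statements.

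For part (1), fix $\gamma \subset P_\alpha$ and its $\sigma_\alpha$--geodesic representative $\gamma^* \subset \lambda$. Since $\mathcal{T}_j^{(1)} \to \lambda$ in Hausdorff distance, inside any fixed thin annular neighborhood of $\gamma^*$ the edges of $\mathcal{T}_j$ traverse along arcs nearly parallel to $\gamma^*$ for $j$ large. The essential input is the structure $\mathcal{T}_j = D^j(\mathcal{T})$: because $D$ includes a Dehn twist around $\gamma \in P_\alpha$, an edge of $\mathcal{T}$ originally crossing $\gamma$ transversely becomes, in $\mathcal{T}_j$, an edge which spirals $k_j$ times around $\gamma^*$ on each side of $\gamma^*$ with $k_j \to \infty$. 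Truncating such an edge after many spirals on one side and closing up with a short arc transverse to the spiral yields a loop $c_j$ on $\Sigma$ freely homotopic to $\gamma^{k_j}$. Because $\eta_j$ is a local isometry on edges, $\ell_{\sigma_{\eta_j}}(c_j)$ equals the $\widetilde M_\infty$--length of $\eta_j(c_j)$, which by the quasi-geodesic control is within a controlled error of the $\tilde\phi_\alpha$--image length, and hence within $O(1)$ of $k_j \ell_{\sigma_\alpha}(\gamma)$. Dividing by $k_j \to \infty$ yields $\ell_{\sigma_{\eta_j}}(\gamma) \leq \ell_{\sigma_\alpha}(\gamma) + \epsilon$.

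For part (2), fix a boundary vertex $v$ on a component $\gamma' \subset \partial\Sigma$. The interior edges at $v$ are the $D^j$--images of the original edges of $\mathcal{T}$ at $v$, and the twist component of $D$ around $\gamma'$ acts in a single direction determined by the spiraling convention. Consequently, in the $\sigma_\alpha$--metric, the initial tangents at $v$ of all interior edges of $\mathcal{T}_j$ converge to a common tangent direction $\vec{b}_1$ to $\gamma'$ at $v$. Since each edge of $\eta_j$ at $\eta_j(v) \in \partial\widetilde M_\infty$ is the $\widetilde M_\infty$--geodesic with the same endpoints as the corresponding $\tilde\phi_\alpha$--image, the quasi-geodesic control forces its initial tangent at $\eta_j(v)$ to converge to the tangent of $\eta_j(\gamma')$ in the $\vec{b}_1$ direction. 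Ordering the edges cyclically on the interior side as $b_1,e_1,\ldots,e_n,b_2$, the angles $\angle(b_1,e_1)$ and $\angle(e_i,e_{i+1})$ all tend to $0$ while $\angle(e_n,b_2) \to \pi$; the cone angle at $v$, being their sum, therefore converges to $\pi$ from above, so is at most $\pi + \epsilon$ once $j$ is large.

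The main obstacle is part (1): verifying that the concatenation of edge-arcs genuinely represents the homotopy class $\gamma^{k_j}$ and that the closure correction contributes negligibly to the $\sigma_{\eta_j}$--length. For part (2), the only quantitative input beyond tangent-convergence is a routine hyperbolic angle estimate, which follows from the rate of convergence $\mathcal{T}_j^{(1)} \to \lambda$ established in the preceding claim.
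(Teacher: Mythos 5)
Your argument for part~(2) is essentially the paper's: the point in both is that the twist $D$ spirals every interior edge at a boundary vertex $v$ in a single direction, so the $\eta_j$--images of these edges make small angles at $\eta_j(v)$ with one of the two tangent directions of the boundary geodesic, leaving one angle close to $\pi$ and a bounded number of angles close to $0$. (The cone angle is the sum of the triangle angles at $v$, and since each geodesic triangle in $\mathbb H^3$ spans a plane, this really is the sum of successive tangent angles, so your bookkeeping is legitimate.)

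For part~(1), however, your route differs from the paper's, and the step you yourself flag as ``the main obstacle'' is a genuine gap. The paper builds a loop representing a \emph{single} period of $\gamma$ from one segment of a single edge together with at most $m = i(\mathcal T^{(1)}, P_\alpha)$ short ``jump'' segments between the $\leq m$ edges of $\widetilde{\mathcal T}_j$ that cross a fundamental domain of a lift $\widetilde\gamma$; each jump crosses one geodesic (hence convex) triangle whose relevant sides both lie near $\widetilde\phi_\alpha(\widetilde\gamma)$, so its $\sigma_{\eta_j}$--length is controlled. You instead build a representative of $\gamma^{k_j}$ and divide by $k_j$ to wash out errors. That idea could be made to work---dividing by $k_j$ indeed buys you slack on the single closing arc---but as stated it needs repair. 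The bound $\ell_{\sigma_{\eta_j}}(c_j) \leq k_j\ell_{\sigma_\alpha}(\gamma)+O(1)$ with $O(1)$ uniform in $j$ is not what the ingredients give you: Claim~\ref{claim:nearly geodesic} yields $(1+\epsilon,\epsilon)$--quasi-geodesy for $j\geq J(\epsilon)$, so comparing the $\eta_j$-- and $\phi_\alpha$--lengths of a spiral \emph{sub}segment of $e$ accumulates error of order $\epsilon k_j$, not $O(1)$ (still fine after dividing, but note a subsegment of the geodesic $\eta_j(e)$ is not automatically close to the corresponding subsegment of $\phi_\alpha(e)$---you need the quasi-geodesic estimate, not just the $1$--Lipschitz bound). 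More importantly, the closing arc is not an edge, so neither Claim~\ref{claim:nearly geodesic} nor $1$--Lipschitzness of $\phi_\alpha$ controls its $\sigma_{\eta_j}$--length; you would need an argument of the paper's type, showing it crosses boundedly many convex triangles whose sides are near $\widetilde\phi_\alpha(\widetilde\gamma)$. Finally, you silently use $\ell_{\sigma_{\eta_j}}(\gamma^{k_j}) = k_j\,\ell_{\sigma_{\eta_j}}(\gamma)$; this is true because the interior cone angles are $\geq 2\pi$ so the interior of $(\Sigma,\sigma_{\eta_j})$ is non-positively curved and $\gamma$ is non-peripheral, but it deserves a sentence. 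With these points addressed the divide-by-$k_j$ strategy should work; the paper's single-period construction avoids them by never letting the spiral length grow.
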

Note that because $\sigma_\alpha$ maps any component $\gamma \subset P_\alpha$ to a geodesic, and since $\eta_j$ is $1$--Lipschitz with respect to $\sigma_{\eta_j}$, we have
$\ell(\phi_\alpha(\gamma)) = \ell_{\sigma_\alpha}(\gamma) \leq \ell_{\sigma_{\eta_j}}(\gamma)$, so part (1) in the claim really says that $\ell_{\sigma_\alpha}(\gamma)$ and $\ell_{\sigma_{\eta_j}}(\gamma)$ are nearly the same.
\begin{proof}
Let $m$ be the maximum geometric intersection number between (the union of arcs in) $\mathcal T^{(1)}$ and any component of $P_\alpha$.  Since $\mathcal T^{(1)}_j = D^j(\mathcal T^{(1)})$, this maximum, $m$, also bounds the geometric intersection number between $\mathcal T^{(1)}_j$ and any component of $P_\alpha$.

Now, let $\tilde \eta_j \colon \widetilde \Sigma \to \widetilde M \subset \mathbb H^3$ be the lift of the simplicial hyperbolic surface $\eta_j \colon (\Sigma,\mathcal T_j) \to \widetilde M_\infty$.
From \Cref{claim:nearly geodesic}, we deduce that, for $j$ sufficiently large, and for every edge $e$ of $\widetilde{\mathcal T}_j$, $\tilde\eta_j\big|_e$ and $\tilde \phi_\alpha|_e$ can be made arbitrarily close (depending on $j$).

For any component $\gamma \subset P_\alpha$ pick a lift $\widetilde \gamma \subset \widetilde \Sigma$ and a segment $\bar \gamma$ that serves as a fundamental domain for the action of $\langle g \rangle$, the stabilizer of $\widetilde \gamma$ in $\pi_1\Sigma$.  Then, for any $j$ consider the set of (at most $m$) edges of $\widetilde{\mathcal T}_j$ that cross $\widetilde \gamma$ at a point of $\bar \gamma$.  For $j$ sufficiently large, the $\tilde \phi_\alpha$-image of these edges are arbitrarily close to the geodesic $\tilde \phi_\alpha(\widetilde \gamma)$ on arbitrarily long segments (depending on $j$), thus the same is true for the $\tilde \eta_j$ images of these edges.  

Next, pick the edge $e \subset \widetilde{\mathcal T}_j$ intersecting $\bar \gamma$ as close to the initial point of $\bar \gamma$ as possible and assume that $j$ is sufficiently large so that all the edges between $e$ and $g \cdot e$ (ordered by the intersection with $\widetilde \gamma$) are mapped within $\frac{\epsilon}{2(m+1)}$ of $\tilde \phi_\alpha(\widetilde \gamma)$ on segments of length at least $\ell_{\sigma_\alpha}(\gamma) + \epsilon$ centered at the points of intersection with $\widetilde \gamma$.

\begin{figure}[htb!]
\begin{tikzpicture}
\node at (0,0) {\includegraphics[width=12cm]{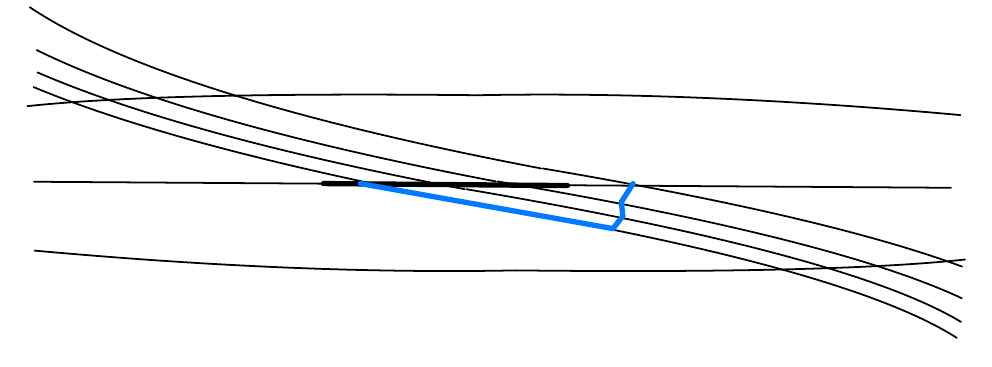}};

\draw[->,>=stealth] (-2.7,-.5) -- (-2,-.05);
\draw[->,>=stealth] (1.7,.2) -- (3.7,.2);
\node at (5.1,-.5) {$g \cdot e$};
\node at (-4.4,.4) {$e$};
\node at (-3,-.4) {$\bar \gamma$};

\node at (2.7,.5) {$g$};
\node at (-5.8,.1) {$\widetilde \gamma$};
\end{tikzpicture}
\caption{The edges of $\widetilde{\mathcal T}_j$ through $\widetilde \gamma$ between $e$ and $g \cdot e$ are mapped close to $\tilde \phi_\alpha(\widetilde \gamma)$ for a long time by $\eta_j$.  The blue segment connects $e \cap \widetilde \gamma$ to $g \cdot e \cap \widetilde \gamma$, and projects to a closed loop in $\Sigma$.} \label{F:edges near geodesic}
\end{figure}

From this we can construct a path in $\widetilde \Sigma$ from $e \cap \widetilde \gamma$ to $g \cdot e \cap \widetilde \gamma$ built from a segment of $e$ of length at most $\ell_{\sigma_\alpha}(\gamma) + \frac{\epsilon}{2(m+1)}$ and at most $m$ short segments of length at most $2 \frac{\epsilon}{2(m+1)}$ between consecutive edges of $\widetilde{\mathcal T}_j$.  Thus the $\sigma_{\eta_j}$--length of this path is at most
\[ \ell_{\sigma_\alpha}(\gamma) + \frac{\epsilon}{2(m+1)} + m\left( \frac{\epsilon}{(m+1)}\right) < \ell_{\sigma_\alpha}(\gamma)+\epsilon.\]
See \Cref{F:edges near geodesic}.
This path then projects to a loop in $\Sigma$ homotopic to $\gamma$ of $\sigma_{\eta_j}$--length at most $\ell_{\sigma_\alpha}(\gamma)+\epsilon$. This proves part (1) of the claim.

For part (2), we similarly observe that for any vertex $v$, any edge $e$ adjacent to $v$ has $\eta_j$--image that stays arbitrarily close to the geodesic image of the boundary component $\eta_j(\gamma)$ for an arbitrarily  long time (depending on $j$).  Now the boundary component $\gamma$ gives two adjacencies to $v$ pointing in opposite directions, and all other edges adjacent to $v$ have $\eta_j$--image making arbitrarily small angle with the $\eta_j$--image of exactly one of these.  It follows that there is one angle equal to at most $\pi$, and at most $m$ angles that can be made arbitrarily small, depending on $j$.  It follows that for $j$ sufficiently large, the angle can be made at most $\pi+\epsilon$, proving part (2).
\end{proof}

To complete the proof, we now observe that since the $\sigma_{\eta_j}$--length of each boundary curve is the same as its length with respect to $\sigma_\alpha$, it is bounded by $L$.  Further, since the cone angle is less than $\pi+\epsilon$, then provided $\epsilon < \pi$, we can enlarge $\Sigma$ to an open surface, $\Sigma \subset \Sigma^\circ$, and extend $\sigma_\alpha$ to a complete {\em non-singular} hyperbolic metric.  The convex core $\bar \Sigma$ of $\Sigma^\circ$ will contain $\Sigma$; see \Cref{F:enlarge and core}. 

\begin{figure}[htb!]
\def\svgwidth{3in}
\begingroup%
  \makeatletter%
  \providecommand\color[2][]{%
    \errmessage{(Inkscape) Color is used for the text in Inkscape, but the package 'color.sty' is not loaded}%
    \renewcommand\color[2][]{}%
  }%
  \providecommand\transparent[1]{%
    \errmessage{(Inkscape) Transparency is used (non-zero) for the text in Inkscape, but the package 'transparent.sty' is not loaded}%
    \renewcommand\transparent[1]{}%
  }%
  \providecommand\rotatebox[2]{#2}%
  \newcommand*\fsize{\dimexpr\f@size pt\relax}%
  \newcommand*\lineheight[1]{\fontsize{\fsize}{#1\fsize}\selectfont}%
  \ifx\svgwidth\undefined%
    \setlength{\unitlength}{696.50793402bp}%
    \ifx\svgscale\undefined%
      \relax%
    \else%
      \setlength{\unitlength}{\unitlength * \real{\svgscale}}%
    \fi%
  \else%
    \setlength{\unitlength}{\svgwidth}%
  \fi%
  \global\let\svgwidth\undefined%
  \global\let\svgscale\undefined%
  \makeatother%
  \begin{picture}(1,0.68235955)%
    \lineheight{1}%
    \setlength\tabcolsep{0pt}%
    \put(0,0){\includegraphics[width=\unitlength,page=1]{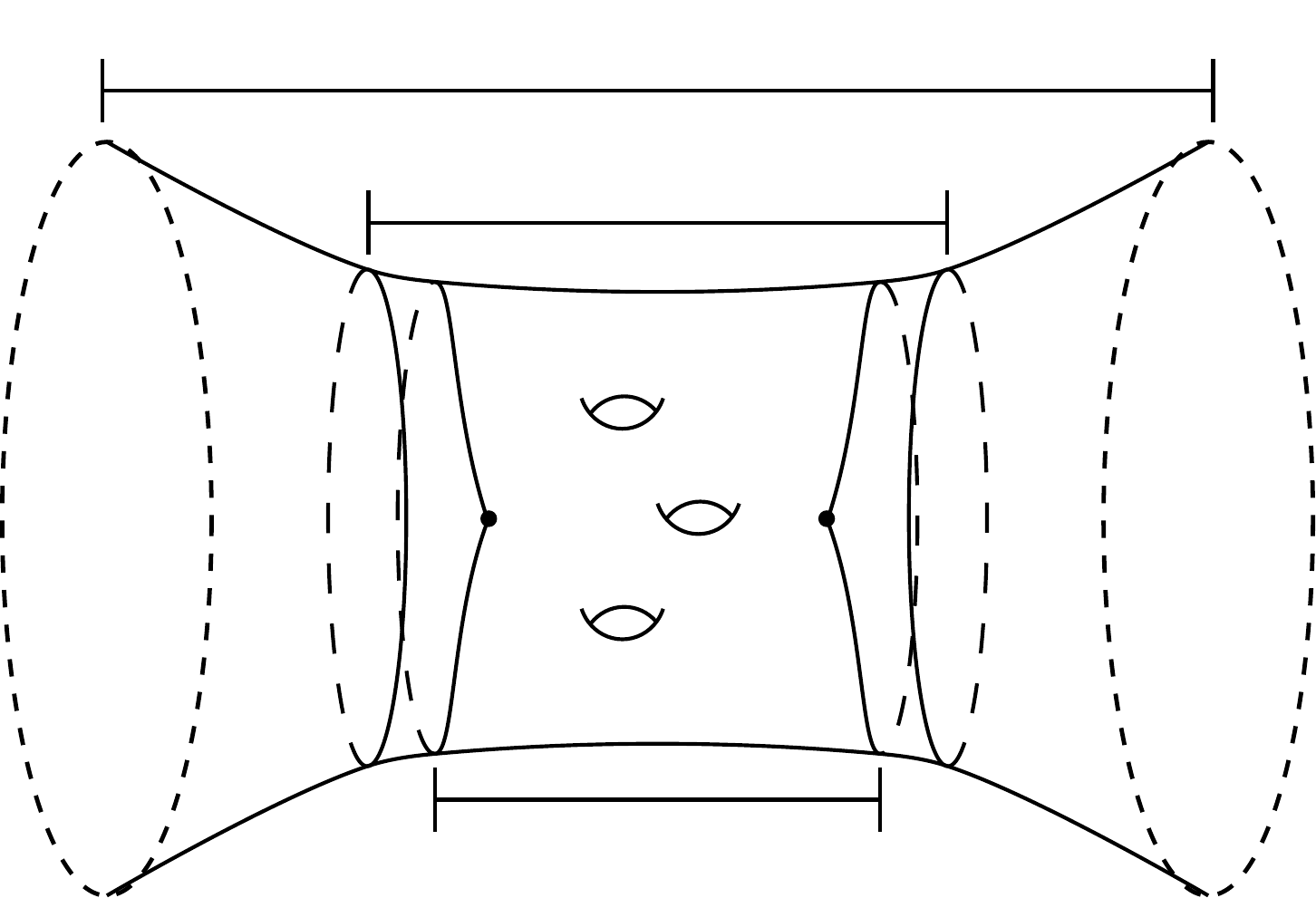}}%
    \put(0.49981492,0.62932713){\color[rgb]{0,0,0}\makebox(0,0)[t]{\lineheight{1.25}\smash{\begin{tabular}[t]{c}$\Sigma^{\circ}$\end{tabular}}}}%
    \put(0.49981492,0.52955705){\color[rgb]{0,0,0}\makebox(0,0)[t]{\lineheight{1.25}\smash{\begin{tabular}[t]{c}$\bar \Sigma$\end{tabular}}}}%
    \put(0.49981492,0.02142558){\color[rgb]{0,0,0}\makebox(0,0)[t]{\lineheight{1.25}\smash{\begin{tabular}[t]{c}$\Sigma$\end{tabular}}}}%
  \end{picture}%
\endgroup%

\caption{Enlarging $\Sigma$ to $\Sigma^\circ$, with convex core $\bar \Sigma$.} \label{F:enlarge and core}
\end{figure}

Given any $\epsilon' > 0$, there exists $\epsilon > 0$ so that if $\gamma \subset \partial \Sigma$ is any component and if the cone angle at the vertex on $\gamma$ has angle less than $\pi + \epsilon$, then $\bar \Sigma \subset N_{\epsilon'}(\Sigma)$ on $\Sigma^\circ$ with respect to $\sigma_{\eta_j}$, for any $j \geq J$ as in \Cref{Claim:nearly pleated}.  We note that the choice of $\epsilon$ depends not only on $\epsilon'$, but also on a lower bound on $\ell_{\sigma_{\eta_j}}(\gamma) = \ell_{\sigma_\alpha}(\gamma)$.

For any $0 < \epsilon < 1$, $j \geq J$ as in \Cref{Claim:nearly pleated}, and any component $\gamma \subset P_\alpha$, $\ell_{\sigma_{\eta_j}}(\gamma) < L + \epsilon < 1$.  By the collar lemma, there exists $w = w(L+1)$ such that the $w$--neighborhood of $\gamma$ is an embedded annulus in $\bar \Sigma$.  Thus, if $\epsilon' < \frac{w}2$, and $\epsilon$ is chosen as in the previous paragraph, then the collar of width $w/2$ is entirely contained in $\Sigma \subset \breve{\Sigma}$.  This gives a lower bound on the $\sigma_{\eta_j}$--moduli of $\gamma$, and hence an upper bound $L_1$ on $\ell_{\sigma_{\eta_j}^\hyp}(\gamma)$ depending on $L$ and $w$, which in turn both depend only on  $\chi(f)$ and $\xi(f)$, as required.
\end{proof}

\begin{remark} We could also find the sequence of simplicial hyperbolic surfaces limiting to $\phi_\alpha \colon \Sigma \to \widetilde M_\infty$ (and likewise for $\phi_\omega$) by adding extra vertices to each pants curve, triangulating each pair of pants, and then appealing to Thurston's original construction of spinning from \cite[Section 8.7]{TNotes}.  The interpolation in that case would not be between triangulations of $\Sigma$ with $k_0$ vertices as in \Cref{lemma.Hatcher.Path}, but instead, for example, could be carried out via a sequence of pants decompositions with marked points on the boundary and the pants curves.  This alternate approach would allow for more general pants laminations which are not required to satisfy the spiraling constraints from \Cref{section.pleated}.
\end{remark}

\subsection{Pants distances and bounded length curves} \label{S:pants distance bounded length} 

Write $\mathcal T_{2L}(\breve{\Sigma})$ to denote the subspace of $\mathcal T(\breve{\Sigma})$ consisting of complete hyperbolic structures for which the (geodesic representatives of) components of $\partial \Sigma$ have length at most $2L$.  Set
\begin{equation} \label{Eq:final complexity} \Xi = \Xi(f)= \tfrac32\left(\xi(f) + |\chi(f)|\right).\end{equation}
From \Cref{Eq:Big Sigma Euler} we have that $\xi(\Sigma) \leq \frac32 |\chi(\Sigma)| \leq \Xi$.
Now set
\[ 
    L_2 = L_2(\chi(f),\xi(f))= \max\{L_1,L_B(\Xi,2L)\},
\]
where $L_B$ is as in \Cref{thm:relative-bers}.  

For any pants decomposition $P_0$ of $\Sigma$, let
\[ 
    V(P_0,L_2) = \{\sigma \in \mathcal T_{2L}(\Sigma) \mid \ell_\sigma(\gamma) < L_2 \mbox{ for each component } \gamma \subset P_0 \}.
\]
By \Cref{thm:relative-bers}, the set $\{V(P_0,L_2)\}_{P_0 \in \CP(\Sigma)}$ is an open cover of $\mathcal T_{2L}(\Sigma)$.

Now let 
\[ 
    \{(\eta_t \colon (\Sigma,{\mathcal T}_t) \to \widetilde M_\infty)\mid t\in [0,1]\} \subset \SH
\]
be the path of simplicial hyperbolic surfaces from \Cref{L:hyperbolic path} connecting the $k_0$--simplicial hyperbolic surfaces $\eta_{\alpha} \colon (\Sigma,{\mathcal T}_{\alpha}) \to \widetilde M_\infty$ to $\eta_{\omega} \colon (\Sigma,{\mathcal T}_{\omega}) \to \widetilde M_\infty$ from \Cref{L:nearly pleated}.  By compactness (and \Cref{L:nearly pleated}), there is a partition
\[ 
    0 = t_0 < t_1 < \cdots < t_n = 1 
\]
and pants decompositions $P_j$, for $j =1,\ldots,n$, so that $P_1 = P_{\alpha}$, $P_n = P_{\omega}$, and
\[ 
    \sigma_t \in V(P_j,L_2) 
\]
for all $t \in [t_{j-1},t_j]$ and $j = 1,\ldots,n$.

Since $P_j$ and $P_{j+1}$ both have length at most $L_2$ with respect to $\sigma_{\eta_{t_j}}^\hyp$, Lemma~3.3 of \cite{Brock-convex-vol} implies that $d_{\CP}(P_j,P_{j+1}) \leq \kappa = \kappa(L_2,\Xi)$.  

Write $\mathcal S(P_1,\ldots,P_n)$ to denote the union of all curves in all pants decompositions $P_1,\ldots,P_n$. The next result is also due to Brock {\cite[Lemma~4.3]{Brock-convex-vol}}.

\begin{lemma} \label{L:Brock sequence distance}
 There exists $\mathcal K$ depending on $\kappa > 0$ and $\xi(\Sigma)$ with the following property.
Let $P_1,\ldots,P_n$ be a sequence of pants decompositions of $\Sigma$ such that 
\[
    d_{\CP}(P_j,P_{j+1}) \leq \kappa,
\]
for all $j = 1,\ldots,n-1$.
Then
\[ 
    d_{\CP}(P_1,P_n) \leq \mathcal K |\mathcal S(P_1,\ldots,P_n)|.
\]
\end{lemma}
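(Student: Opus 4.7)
The plan is to apply the Masur--Minsky distance formula for the pants graph of the finite-type surface $\Sigma$ \cite{MasurMinsky.2000}, which asserts that, for a threshold $K$ and comparison constants depending only on $\xi(\Sigma)$,
\[ d_{\CP}(P_1,P_n) \asymp \sum_W [d_W(P_1,P_n)]_K, \]
summed over essential non-annular subsurfaces $W \subseteq \Sigma$ with $\xi(W) \geq 1$, where $d_W$ denotes subsurface projection distance in the curve graph $\C(W)$ and $[x]_K = x$ if $x \geq K$ and $0$ otherwise. Granting this, it suffices to bound the right-hand side above by a constant multiple of $|\mathcal S|$, with the constant depending only on $\kappa$ and $\xi(\Sigma)$.

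The first step is to bound each individual projection term: since every $P_j$ is a multicurve contained in $\mathcal S$, the projection $\pi_W(P_j)$ lies in the finite set $\pi_W(\mathcal S) \subseteq \C(W)$, whose diameter is at most $|\mathcal S|$; hence $d_W(P_1,P_n) \leq |\mathcal S|$. The second step bounds the number of subsurfaces whose term exceeds the threshold. Any $W$ with $\diam_{\C(W)} \pi_W(\mathcal S) \geq K$ forces $\partial W$ to be essentially disjoint from the curves of $\mathcal S$ that project nontrivially to $W$, and standard subsurface-projection arguments then show the isotopy class of $\partial W$ is determined by $\mathcal S$ up to a number of choices depending only on $|\mathcal S|$ and $\xi(\Sigma)$. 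Combining these two bounds through the distance formula yields the desired $\mathcal K = \mathcal K(\kappa, \xi(\Sigma))$ with $d_{\CP}(P_1,P_n) \leq \mathcal K |\mathcal S(P_1,\ldots,P_n)|$.

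The main obstacle I anticipate is the second step---carefully controlling the combinatorial count of contributing subsurfaces and the dependence of the threshold on $\kappa$. An alternative route closer to Brock's original argument in \cite{Brock-convex-vol} avoids the Masur--Minsky machinery by inducting on $|\mathcal S|$: using $\kappa$-closeness and pigeonhole, extract a curve $\gamma \in \mathcal S$ shared by a long consecutive block $P_i,\ldots,P_{i'}$, cut along $\gamma$ to work in the complementary subsurface (of strictly smaller complexity), and recurse, bounding the total pants-graph distance by the number of such block-transitions, which is itself bounded by $|\mathcal S|$. This sidesteps hierarchies but demands care to maintain the $\kappa$-consecutive property on the restricted sequence and to correctly stitch together distance bounds across the cut.
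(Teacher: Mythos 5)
The paper does not prove this lemma; it cites Brock directly, \cite[Lemma~4.3]{Brock-convex-vol}, which establishes the estimate via the Masur--Minsky hierarchy machinery. Your proposal attempts an independent argument via the distance formula, but there are concrete gaps.

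Your first step asserts that $\pi_W(\mathcal{S}) \subseteq \C(W)$ ``has diameter at most $|\mathcal{S}|$.'' This is not true in general: cardinality does not control diameter in a graph, and two curves of $\mathcal{S}$ could a priori project arbitrarily far apart in $\C(W)$. What makes a bound of this shape correct here is the $\kappa$-step hypothesis, which you have not invoked at this stage: $d_{\CP}(P_j,P_{j+1}) \le \kappa$ forces $d_W(P_j,P_{j+1}) \le \kappa'(\kappa,\xi(\Sigma))$, disjoint curves have uniformly close projections, and $\pi_W(P_j)$ is determined by $P_j \cap T_W$ where $T_W := \{s\in\mathcal{S} : \pi_W(s)\neq\emptyset\}$. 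One can then argue that $T_W$, equipped with the relation ``appearing in the same or in consecutive $P_j$,'' forms a connected graph along which projection moves by a uniformly bounded amount, so $d_W(P_1,P_n) \le C(\kappa,\xi(\Sigma))\,|T_W| \le C|\mathcal{S}|$. The conclusion is salvageable, but not by the reason you give.

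The accounting in the second step also does not close the argument even schematically. Suppose each term $[d_W]_K$ were bounded by $C|\mathcal{S}|$ and the number of contributing $W$ were bounded by some function of $|\mathcal{S}|$ and $\xi(\Sigma)$; multiplying gives a bound that is in general superlinear in $|\mathcal{S}|$, not the linear bound the lemma requires. The summation must be organized differently, for instance
\[
\sum_{W} [d_W(P_1,P_n)]_K \;\le\; C \sum_{W : d_W \ge K} |T_W| \;=\; C \sum_{s\in\mathcal{S}} \bigl|\{\,W : d_W(P_1,P_n)\ge K,\ s\in T_W\,\}\bigr|,
\]
so that the essential content becomes bounding the inner cardinality, per curve $s$, by a constant depending only on $\kappa$ and $\xi(\Sigma)$. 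Your sketch does not address this, and it is precisely where the bounded geodesic image and time-ordering arguments must be deployed. You have identified the right sort of tools, and your alternative inductive route is also plausible in outline, but both routes stop short at exactly the point where the argument becomes nontrivial.
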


For us, the key application of this lemma is the following. 

\begin{corollary} \label{C:cal K bound} Let $P_{\alpha}= P_1,\ldots,P_n = P_{\omega}$ be as above and $\mathcal K$ as in Lemma~\ref{L:Brock sequence distance} (which depends only on $\xi(f)$ and $\chi(f)$).  Then
\[ 
    d_{\CP}(P_{\alpha},P_{\omega}) \leq \mathcal K |\mathcal S(P_1,\ldots,P_n)|.
\]
\end{corollary}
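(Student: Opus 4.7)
The plan is to simply invoke \Cref{L:Brock sequence distance} applied to the sequence $P_\alpha = P_1, \ldots, P_n = P_\omega$ already constructed in \Cref{S:pants distance bounded length}, once we verify that its hypothesis holds with a constant $\kappa$ that depends only on the capacity.

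First I would check that consecutive pants decompositions in the sequence are close in $\mathcal{P}(\Sigma)$. By construction, the pants decomposition $P_j$ lies in $V(P_j, L_2)$ for every $t \in [t_{j-1}, t_j]$, and in particular at $t = t_j$; likewise $P_{j+1} \in V(P_{j+1}, L_2)$ at $t = t_j$. Thus both $P_j$ and $P_{j+1}$ have length at most $L_2$ with respect to the common hyperbolic structure $\sigma_{\eta_{t_j}}^{\hyp}$ on $\breve{\Sigma}$. The Brock bound \cite[Lemma~3.3]{Brock-convex-vol} (applied to the finite-type surface $\Sigma$) then gives $d_{\mathcal P}(P_j, P_{j+1}) \leq \kappa$, where $\kappa = \kappa(L_2, \Xi)$.

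Next I would track the dependence of all constants on the capacity. We have $L_2 = L_2(\chi(f), \xi(f))$ directly from its definition in \Cref{S:pants distance bounded length}, and $\Xi = \Xi(f) = \tfrac{3}{2}(\xi(f) + |\chi(f)|)$ from \Cref{Eq:final complexity}, so $\kappa$ depends only on $\chi(f)$ and $\xi(f)$. Moreover, by \Cref{Eq:Big Sigma Euler} we have $\xi(\Sigma) \leq \Xi$, so the constant $\mathcal{K} = \mathcal{K}(\kappa, \xi(\Sigma))$ supplied by \Cref{L:Brock sequence distance} depends only on $\chi(f)$ and $\xi(f)$ as claimed. Finally, applying \Cref{L:Brock sequence distance} to the sequence $P_1, \ldots, P_n$ yields
\[
    d_{\mathcal P}(P_\alpha, P_\omega) = d_{\mathcal P}(P_1, P_n) \leq \mathcal{K}\,|\mathcal{S}(P_1, \ldots, P_n)|,
\]
which is the desired bound. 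There is no genuine obstacle here: the heavy lifting has already been done in building the interpolation through simplicial hyperbolic surfaces with uniformly short pants decompositions (\Cref{L:nearly pleated} and \Cref{L:hyperbolic path}), and the corollary is essentially a bookkeeping application of Brock's lemma on this specific sequence.
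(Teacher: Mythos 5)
Your proposal is correct and follows the same route the paper takes: the corollary is stated immediately after the paper establishes the bound $d_{\mathcal P}(P_j, P_{j+1}) \leq \kappa$ from Brock's Lemma~3.3 (using that $P_j$ and $P_{j+1}$ are both $L_2$-short in the common hyperbolic structure $\sigma_{\eta_{t_j}}^{\hyp}$), and then Lemma~\ref{L:Brock sequence distance} is invoked directly. One small slip in your write-up: you write ``$P_j$ lies in $V(P_j, L_2)$,'' but $V(P_j, L_2)$ is a subset of $\Teich(\breve{\Sigma})$, so what you mean is $\sigma_{\eta_t}^{\hyp} \in V(P_j, L_2)$ for $t \in [t_{j-1},t_j]$ (and $\sigma_{\eta_{t_j}}^{\hyp} \in V(P_{j+1}, L_2)$ as well); your subsequent conclusion about both decompositions being short at $t = t_j$ is exactly right.
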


Finally, from \Cref{T:Ahlfors} we have the following.
\begin{lemma} \label{L:lots of bounded curves}
Let $P_{\alpha}= P_1,\ldots,P_n = P_{\omega}$ be as above.  Then the geodesic representative in $\widetilde M_\infty$ of each curve in $\mathcal S(P_1,\ldots,P_n)$ has length bounded above by $L_2$. \qed
\end{lemma}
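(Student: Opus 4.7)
The plan is to bound the length of any curve $\gamma \in \mathcal{S}(P_1,\ldots,P_n)$ by chasing it through a short chain of three $1$--Lipschitz maps: from the uniformizing hyperbolic structure $\sigma_{\eta_t}^\hyp$ on $\breve{\Sigma}$, down to the singular hyperbolic structure $\sigma_{\eta_t}$ on $\breve{\Sigma}$ coming from the simplicial hyperbolic surface, and finally into $\widetilde M_\infty$.

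First, fix an index $j$ with $\gamma \subset P_j$ and pick any $t \in [t_{j-1},t_j]$. By construction of the partition, $\sigma_{\eta_t}^\hyp \in V(P_j,L_2)$, so $\ell_{\sigma_{\eta_t}^\hyp}(\gamma) < L_2$. The first statement of \Cref{T:Ahlfors} gives that the identity $(\breve{\Sigma},\sigma_{\eta_t}^\hyp) \to (\breve{\Sigma},\sigma_{\eta_t})$ is $1$--Lipschitz, producing a loop freely homotopic to $\gamma$ in $\breve{\Sigma}$ of $\sigma_{\eta_t}$--length less than $L_2$. Finally, because $\eta_t$ sends each triangle of $\mathcal T_t$ isometrically to a totally geodesic triangle of $\widetilde M_\infty$ and its extension $\breve{\eta}_t$ is a local isometry on the attached half--annuli, the map $\breve{\eta}_t \colon (\breve{\Sigma},\sigma_{\eta_t}) \to \widetilde M_\infty$ is itself $1$--Lipschitz. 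Hence $\breve{\eta}_t(\gamma)$ is a loop in $\widetilde M_\infty$ of length at most $L_2$ in the free homotopy class of $\gamma$ (viewed as a curve in $\widetilde M_\infty$ via $\widetilde\varphi|_\Sigma$). Since the geodesic representative is the length minimizer in its free homotopy class, its length is at most $L_2$ as well.

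There is no serious obstacle here; the only piece of bookkeeping to be careful about is that the free homotopy class of $\gamma$ in $\widetilde M_\infty$ is correctly tracked across the three maps. Homotopy invariance in $\breve{\Sigma}$ handles the first two steps, and for the third one uses that each $\eta_t$ is homotopic to $\phi_\alpha|_\Sigma = \widetilde\varphi|_\Sigma$ through maps stationary on $\partial \Sigma$ (by \Cref{L:realize simplicial}, \Cref{lemma.Canary.Family}, and \Cref{C:flip interpolation}), which is homotopic to the restriction of the inclusion of $S$ into $\widetilde M_\infty$. Thus the lemma amounts to three successive invocations of the $1$--Lipschitz property supplied by \Cref{T:Ahlfors} and the construction of $\SH$.
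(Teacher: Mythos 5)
Your proof is correct and is essentially the argument the paper leaves implicit (the lemma is stated with a \qed and the remark that it follows ``from \Cref{T:Ahlfors}''). You chain together the three $1$-Lipschitz maps $(\breve{\Sigma},\sigma_{\eta_t}^{\hyp}) \to (\breve{\Sigma},\sigma_{\eta_t}) \to \widetilde M_\infty$ exactly as the cited theorem and the construction of simplicial hyperbolic surfaces are designed to allow, and you correctly track the free homotopy class via the homotopy of $\eta_t$ to $\phi_\alpha = \widetilde\varphi\big|_\Sigma$.
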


\section{Bounding volume}
\label{section.volume}

We are almost ready to prove the main theorem. We will need one more result, again due to Brock \cite[Lemma~4.8]{Brock-convex-vol}.  Suppose $M$ is compact, convex, hyperbolic $3$-manifold, $\mathcal L > 0$, and let $\mathcal G_{\mathcal L}(M)$ denote the set of closed geodesics in $M$ with length less than $\mathcal L$. 

\begin{proposition} \label{P:Brock lower volume}
Given $\mathcal L > 0$ greater than the Bers constant for closed surfaces of complexity $\xi$, there is a constant $\mathcal V = \mathcal V(\xi,\mathcal L) > 0$ with the following property.  Given a compact hyperbolic $3$-manifold $M$ with totally geodesic boundary and $\xi(\partial M) \leq \xi$, then
\[ 
    \Vol(M) \geq \mathcal V |\mathcal G_{\mathcal L}(M)|.
\]
\end{proposition}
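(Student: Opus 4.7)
The plan is to split the counting of short geodesics into two regimes—a Margulis-short regime and a moderately short regime—and to exhibit a definite volume contribution from each. The final output will be a bound of the form $\Vol(M) \geq \mathcal V(\xi,\mathcal L) \cdot |\mathcal G_{\mathcal L}(M)|$ with $\mathcal V$ the minimum of two constants produced by the two regimes.

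First I would handle the geodesics $\gamma$ with $\ell(\gamma) < \epsilon_3$, where $\epsilon_3$ is the $3$-dimensional Margulis constant. For each such $\gamma$ the Margulis Lemma produces an embedded solid-torus Margulis tube $T_\gamma \subset M$, and these tubes are pairwise disjoint with $\Vol(T_\gamma) \geq v_0$ for a universal constant $v_0 > 0$. Summing over such $\gamma$ yields
\[
    \Vol(M) \geq v_0 \cdot \#\{\gamma \in \mathcal G_{\mathcal L}(M) : \ell(\gamma) < \epsilon_3\}.
\]

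Next I would handle the ``moderately short'' geodesics, those with $\epsilon_3 \leq \ell(\gamma) \leq \mathcal L$. Using Thurston--Bonahon realization, each such $\gamma$ is carried by an incompressible pleated surface $f_\gamma \colon (S_\gamma,\sigma_\gamma) \to M$ whose domain has complexity $O(\xi)$. The hypothesis that $\mathcal L$ exceeds the Bers constant $L_B(\xi)$ then lets me arrange $\gamma$ to appear as one pants curve of a pants decomposition of $(S_\gamma,\sigma_\gamma)$ all of whose curves are of length at most $\mathcal L$. By Gauss--Bonnet, $\area(f_\gamma) \leq 2\pi |\chi(S_\gamma)| = O(\xi)$, and any pants decomposition contains at most $\xi$ curves, so each pleated surface realizes at most $\xi$ of the geodesics being counted.

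The last step, and what I expect to be the main obstacle, is to bound the number of pleated surfaces needed in terms of $\Vol(M)$. I would argue via an $\epsilon_3$-separation/packing argument: if the images of two such pleated surfaces are pairwise $\epsilon_3$-separated in $M$, then their $\epsilon_3/3$-neighborhoods are pairwise disjoint and each contains volume at least some $v_1 = v_1(\epsilon_3,\xi) > 0$, giving at most $\Vol(M)/v_1$ such surfaces; a bounded-multiplicity correction handles surfaces that cluster within $\epsilon_3$ of each other, using the fact that in the $\epsilon_3$-thick part the number of pleated surfaces through a point representing bounded-complexity subsurfaces is uniformly bounded in $\xi$ (by standard coarse counting of homotopy classes of such surfaces, as in Bonahon). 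Combining, the number of moderately short geodesics is at most $\xi \cdot \Vol(M)/v_1$ up to the bounded multiplicity, so $\Vol(M) \geq \mathcal V' \cdot \#\{\gamma : \epsilon_3 \leq \ell(\gamma) \leq \mathcal L\}$ for some $\mathcal V' = \mathcal V'(\xi,\mathcal L) > 0$. Setting $\mathcal V = \min\{v_0,\mathcal V'\}$ completes the proof. The genuinely delicate part is controlling clustering of pleated surfaces: they may fold, self-intersect, and overlap heavily, and disentangling how the area of each surface actually sweeps out new volume—rather than covering regions already counted—requires a careful mixing of Margulis-tube data with pleated-surface injectivity estimates.
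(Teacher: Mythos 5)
The paper does not actually prove this proposition: it cites Brock \cite[Lemma~4.8]{Brock-convex-vol}, and the remark immediately following the statement explains how to pass from Brock's version --- which carries an additive error depending on $\chi(\partial M)$ --- to the purely multiplicative form stated here, using the Kojima--Miyamoto uniform lower bound on the volume of a compact acylindrical hyperbolic $3$-manifold with totally geodesic boundary together with the hypothesis that $\mathcal L$ exceeds the Bers constant. So you were attempting the substantially harder task of re-deriving Brock's count from scratch.

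Your sketch has genuine gaps. The Margulis-tube claim that $\Vol(T_\gamma) \geq v_0$ for a universal $v_0$ is false as stated: the volume of the thin-part component around $\gamma$ tends to $0$ as $\ell(\gamma)$ approaches the Margulis constant, so the cutoff must be shrunk below the Margulis constant to obtain a uniform bound (fixable, but worth flagging). More seriously, the ``Thurston--Bonahon realization'' step is not available here: realization produces a pleated representative of a \emph{given} incompressible map $S \to M$ carrying a prescribed lamination on $S$; it does not, for an arbitrary closed geodesic $\gamma$ in $M$, manufacture an incompressible pleated surface of complexity $O(\xi)$ through $\gamma$, and there is no a priori reason such a surface exists. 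Finally, even granting such surfaces, your packing argument hangs on the claim that only boundedly many of them pass near a thick point, which you yourself flag as delicate and leave unproved --- pleated surfaces carry no definite $3$-dimensional volume and can accumulate near a point without bound, so the $\epsilon/3$-neighborhood count does not close up. The moderately-short count in fact does not need pleated surfaces at all: cover the thick part by a number of balls of half the Margulis radius controlled by $\Vol(M)$; a primitive geodesic of moderate length cannot lie entirely inside a Margulis tube, hence meets some covering ball, and the number of free homotopy classes of loops of length at most $\mathcal L$ meeting a fixed embedded ball of definite radius is bounded in terms of $\mathcal L$ alone, by lifting to $\mathbb{H}^3$ and a standard packing bound on the deck transformations moving a lift of the center a bounded distance. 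That direct thick-ball count, not a pleated-surface argument, is the substance of Brock's lemma and is the missing ingredient in your sketch.
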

\begin{remark}
Brock's statement in \cite{Brock-convex-vol} involves an additive error as well that depends on $\chi(\partial M)$ (and not $\mathcal L$).  However, it does not require $\mathcal L$ sufficiently large (in our statement, greater than the Bers constant).  Because we assume $M$ is acylindrical in this statement, there is a uniform lower bound to the volume (approximately 6.452...)~by a result of Kojima and Miyamoto \cite{KojimaMiyamoto}, and, because we have assumed $\mathcal L$ is greater than the Bers constant, $\mathcal G_{\mathcal L}(M)$ is nonempty.  Consequently, we can absorb the additive constant into the multiplicative one, arriving at the version that is most useful for our purposes.
\end{remark}

\begin{proof}[Proof of \Cref{T:main.theorem}]
Let $P_{\alpha} =P_1,\ldots, P_n = P_{\omega}$ be the sequence of pants decomposition constructed in \S\ref{S:pants distance bounded length} on $\Sigma$ and $\mathcal S(P_1,\ldots,P_n)$ the set of all curves in all pants decompositions $P_1,\ldots, P_n$.  By \Cref{L:lots of bounded curves},
\[
    \mathcal S(P_1,\ldots,P_n) \subset \mathcal G_{L_2}(\widetilde M_\infty).
\]

By \Cref{L:no-closed-curves}, setting $N = 2 \xi(\Sigma) \leq 2\Xi(f)$, we have that $\Sigma$ and $f^m(\Sigma)$ have no closed curves in common for all $m \geq N$.  In particular, no two curves in $\mathcal S(P_1,\ldots,P_n)$ differ by an element of $\langle f^N \rangle$.  Consequently, no two elements of $\mathcal S(P_1,\ldots,P_n)$ project to the same homotopy class in $\overline M_{f^N}$.  In particular, we have
\begin{equation} \label{E:lots of curves inject} |\mathcal S(P_1,\ldots,P_n)| \leq |\mathcal G_{L_2}(\overline M_{f^N})|.
\end{equation}

Now observe that
\begin{eqnarray*}
\tau(f) & \leq & d_{\CP}(P,f(P)) \leq  d_{\CP}(P_{\alpha},P_{\omega}) \leq  \mathcal K|\mathcal S(P_1,\ldots,P_n)|\\\\
    & \leq & \mathcal K |\mathcal G_{L_2}(\overline M_{f^N})| \leq
 \frac{\mathcal K}{\mathcal 
    V} \Vol(\overline M_{f^N}) = \frac{N \mathcal K}{\mathcal V} \Vol(\overline M_f).
\end{eqnarray*}
The first inequality is by definition.  The second follows from \Cref{E:reduction to finite type pants}.  The third is by \Cref{C:cal K bound}.  The fourth follows from \Cref{E:lots of curves inject}.  The fifth inequality comes from \Cref{P:Brock lower volume} (note that we must adjust $\mathcal V$ because of the power $N$, but this is also uniform depending on the capacity).  The final equality comes from the fact that $\overline M_{f^N}$ is an $N$--fold cover of $\overline M_f$. Since $N,\mathcal K, \mathcal V$ all depend only on $\xi(f)$ and $\chi(f)$, this completes the proof.
\end{proof}

\section{Bounded length invariant components}
\label{sec:bdd-length-inv-comps}

Any $f$--invariant component $\Omega \subset \mathcal P(S)$ determines a pants decomposition $P_\Omega$ of $\partial \overline M_f$ (see \cite{EndPeriodic1}).  More precisely, if $P \in \Omega$ is any pants decomposition representing a vertex in this component then, after identifying $\partial \overline M_f$ with the quotient $S_+ \sqcup S_- = (\mathcal U_+ \sqcup \mathcal U_-)/\langle f \rangle$, the preimage of $P_\Omega$ in $\mathcal U_+ \sqcup \mathcal U_-$ agrees with $P$ on neighborhoods of the attracting and repelling ends of $\mathcal U_+$ and $\mathcal U_-$, respectively.  Given an $f$--invariant component $\Omega \subset \mathcal P(S)$, the pants decomposition $P_\Omega$ can be constructed by first observing that for any $P \in \Omega$ there are good nesting neighborhoods $U_\pm$ so that $P$ defines a pants decomposition $P|_{U_\pm}$ of $U_\pm$ (in particular, $\partial U_\pm$ is a union of curves in $P$) and so that $f^{\pm 1}(P|_{U_\pm}) \subset P|_{U_{\pm}}$.  It follows that 
\[ \bigcup_{k=0}^{\infty} f^{\mp k}(P|_{U_\pm}) \] 
is a pants decomposition of $\mathcal U_\pm$ which is $\langle f \rangle$--invariant, and hence, descends to a pants decomposition $P_\Omega$ on $S_\pm$.

The construction of the pants decomposition $P$ in the proof of \Cref{L:bounded pants decomp} defines such an $f$--invariant component $\Omega_0 \subset \mathcal P(S)$, and can be explicitly described as follows.  The subsurface $\Delta_+ \subset \overline{U_+} \subset \mathcal U_+$ is a fundamental domain for the action of $\langle f \rangle$, and the chosen pants decomposition $P_+$ from that proof projects to the components of $P_{\Omega_0}$ contained in $\partial_+ \overline M_f$.   A similar statement is true for the components of $P_{\Omega_0}$ in $\partial_+ \overline M_f$.  We note that the components of $P_{\Omega_0}$ have uniformly bounded length, depending only on the capacity by \Cref{L:bounded pants decomp}.

Every $f$--invariant component $\Omega \subset \mathcal P(S)$ has its own translation length
\[ \tau_\Omega(f) = \inf_{P \in \Omega} \lim_{k \to \infty} \frac{d_{\mathcal P}(P,f^k(P))}{k}. \]
As was shown in \cite{EndPeriodic1}, for any strongly irreducible end-periodic homeomorphisms $f$, there is always a sequence of $f$--invariant components $\Omega_n$ so that $\tau_{\Omega_n}(f) \to \infty$ as $n \to \infty$.
On the other hand, by definition
\[ \tau(f) = \inf_{k,\Omega}\frac{\tau_\Omega(f^k)}k,\]
where the infimum is taken over all $k \geq 1$ and $f^k$--invariant components $\Omega$.

Thus there are two measures of efficiency for a component $\Omega \subset \mathcal P(S)$ with respect to a strongly irreducible end-periodic homeomorphism $f \colon S \to S$.  The first is that $P_\Omega$ has bounded length, which is a geometric condition in terms of the hyperbolic geometry of $\overline M_f$.  The second is purely topological/combinatorial, and is that  $\tau_\Omega(f)$ approximates $\tau(f)$.  The next result says that these can be achieved simultaneously.

\medskip

\noindent{\bf \Cref{T:bdd len compts}}
{\em \boundedlengththm }

\smallskip

\begin{proof}
We claim that the component $\Omega_0$ defined by $P$ from \Cref{L:bounded pants decomp} satisfies the conditions of the proposition.  Since $U_+ \cup U_-$ projects locally isometrically to $\partial \overline M_f$, and since the components of $P$ have length bounded by $L = L(\xi(f),\chi(f))$, the component of $P_{\Omega_0}$ are similarly bounded by $L$.

To see that $\tau_{\Omega_0}(f)$ is bounded by a uniform constant multiple of $\tau(f)$, we first observe that the proof of \Cref{T:main.theorem} in fact shows that
\[ \tau_{\Omega_0}(f) \leq \frac1{C_2} \Vol(\overline M_f), \]
where $C_2$ was explicitly shown to be given by $\frac{\mathcal V}{N \mathcal K}$.  On the other hand, $\Vol(\overline M_f) \leq V_\oct \tau(f)$ by the main result of \cite{EndPeriodic1}.  Therefore,
\[\tau_{\Omega_0}(f) \leq \frac{V_\oct}{C_2} \tau(f).\]
Setting $E = \max \left\{L,\frac{V_\oct}{C_2}\right\}$ proves the theorem.
\end{proof}

\subsection*{Acknowledgments}
Field and Loving were supported in part by NSF Mathematical Sciences Postdoctoral Research Fellowships. The authors were also supported in part by NSF grants DMS-1840190 and DMS-2103275 (Field),   DMS-1904130 and DMS-2202718 (Kent), DMS-2106419 and DMS-2305286 (Leininger), and DMS-2231286 (Loving). The authors would like to thank Yair Minsky for useful conversations, Heejoung Kim for her collaboration on the paper \cite{EndPeriodic1}, which motivated this one, and Hugo Parlier for pointing us towards his proof of the relative Bers lemma. They are grateful to Sam Taylor for a suggestion that simplified the proof of \Cref{T:bdd len compts} and the exposition more generally. Finally, they would like to thank Michael Landry, Chi Cheuk Tsang, Brandis Whitfield, and especially the anonymous referee for their careful reading and comments on various drafts of this paper.

\bibliographystyle{alpha}
  \bibliography{main}

\end{document}